\theoremstyle{definition}
\newtheorem{theorem}{Theorem}[section] % to number according to section
\newtheorem{corollary}[theorem]{Corollary} % to number according to section
\newtheorem{lemma}[theorem]{Lemma} % to number according to section
\newtheorem{fact}[theorem]{Fact} % to number according to section
\newtheorem{definition}[theorem]{Definition} % to number according to section
\newtheorem{example}[theorem]{Example}
\newcommand{\subsetsim}{\mathrel{\mathpalette\subset@sim\relax}}
\newcommand{\subset@sim}[1]{%
  \vtop{\offinterlineskip\m@th
    \ialign{\hfil##\cr
      $#1\subset$\cr\noalign{\kern0.3pt}\scalebox{0.9}{$#1\sim$}\cr
    }%
  }%
}
\newcommand\unitcolsum{\mathcal{M}}
\newcommand\zerocolsum{\mathcal{L}}
\newcommand\fra[2]{\textstyle{\frac{#1}{#2}}}
\newcommand\RR{\mathbb{R}}
\newcommand\oneVec{\pmb{\theta}}
\DeclareMathOperator{\spam}{span}
\DeclareMathOperator{\mat}{Mat}
\DeclareMathOperator{\fix}{Fix}
\newcommand\nnfootnote[1]{%
  \begin{NoHyper}
  \renewcommand\thefootnote{}\footnote{#1}%
  \addtocounter{footnote}{-1}%
  \end{NoHyper}
}
\title{Uniformization stable Markov models and their Jordan algebraic structure}
\author{Luke Cooper\thanks{Corresponding author},\ Jeremy Sumner\\\\{\small School of Natural Sciences, University of Tasmania,
Private Bag 37,
Hobart, TAS, 7001, Australia}}
\begin{document}
\maketitle
\begin{abstract}
\noindent We provide a characterisation of the continuous-time Markov models where the Markov matrices from the model can be parameterised directly in terms of the associated rate matrices (generators). That is, each Markov matrix can be expressed as the sum of the identity matrix and a rate matrix from the model.
We show that the existence of an underlying Jordan algebra provides a sufficient condition, which becomes necessary for (so-called) linear models. We connect this property to the well-known uniformization procedure for continuous-time Markov chains by demonstrating that the property is equivalent to all Markov matrices from the model taking the same form as the corresponding discrete time Markov matrices in the uniformized process.
We apply our results to analyse two model hierarchies practically important to phylogenetic inference, obtained by assuming (i) time-reversibility and (ii) permutation symmetry, respectively.
\\
Keywords: Markov chain, Jordan algebra, representation theory, phylogenetics, time reversibility \\
MSC: 60J10, 17C90

\end{abstract}

\section{Introduction}
\label{sec:introduction}
Although the results we present here are general and valid for a wide range of modelling purposes, our motivations primarily come from the mathematics that underlies modern approaches to phylogenetic inference.
The latter can broadly be understood as the suite of mathematical, statistical and computational methods currently available to infer evolutionary trees from present-day sequence data, which are excellently summarised in \cite{felsenstein,steel}.
In particular, these methods typically involve continuous-time Markov chains on a finite state space (e.g. four states for models of DNA evolution) and specific models occur as collections of Markov generator matrices, or `rate matrices', where the model is defined by polynomial restrictions on the matrix entries (at least implicitly --- we will present several examples below). 
From the statistical inference point of view, we note that the parameterisation of a phylogenetic model most often occurs at the level of the substitution rates, as opposed to substitution probabilities, and it is this formulation that motivates the ideas we explore here. \nnfootnote{Email addresses: \url{luke.cooper@utas.edu.au} (L. Cooper) and \url{jeremy.sumner@utas.edu.au} (J. Sumner)}

Historically, the application of abstract algebra to genetics has been considered from as early as 1939 \cite{genAlgebra}.
In particular, Jordan algebras have been discussed in the context of genetics for the theory of DNA recombination \cite{dnaRecomb} and, in a broader context, for Markov processes on non-associative state spaces \cite{mukhamedov}. 
The work we present here provides a detailed examination of the natural Jordan algebraic structure that underlies many of the DNA substitution models commonly used in phylogenetic inference. 
We explain the details of how this structure arises presently.

We recall the well-known result from Markov chain theory that, for the homogeneous continuous time case, the associated Markov (stochastic) matrices and rate matrices (generators) are linked by the matrix exponential $M=e^{Qt}$, where $t$ is time elapsed, $M$ is a Markov matrix (non-negative entries and row sums equal to 1), and $Q$ is a rate matrix (non-negative off-diagonal entries and row sums equal to 0). 
We also recall the classical `Markov embedding problem', which asks, given a Markov matrix $M$, whether there exists a rate matrix $Q$ such that $M\in \{e^{Qt}:t\geq 0\}$ \cite{davies}.
In recent work on addressing the embedding problem, \cite{sumnerEmbed} introduced the important notion of `stability' for Markov models and, in particular, noted that, in the specific case where the polynomial restrictions defining a model are linear, the stability condition is equivalent to demanding the rate matrices of the model form a Jordan algebra (in the appropriate sense, with details given below). 
In the present work we establish that the class of models where the rate matrices form a Jordan algebra are precisely those for which there is a elementary relationship between the structure of the rate matrices $Q$ and the resulting Markov matrices $M=e^{Qt}$. 
Specifically, we show that requiring the set of rate matrices defining a Markov model to form a Jordan algebra is equivalent to the statement that, for each rate matrix $Q$ in the model and time $t>0$, there exists another rate matrix $\hat Q$ in the same model such that $e^{Qt}-I_n=\hat Q$ (where $I_n$ is the $n\times n$ identity matrix). 
We will formally define this notion using the standard notion of uniformization for continuous-time Markov chains, and hence refer to models satisfying this property as `uniformization stable'.

From a modelling perspective, this structural connection is compelling as it guarantees the most elementary relationship between the structure of the rate matrices and the Markov matrices of the model.
For instance, this relationship provides a direct method for parameterising a model in terms of substitution probabilities, rather than using rates (as is standard practice in the practical setting). More specifically, uniformization stability of a model implies that the relationship between the Markov matrices and the rate matrices is linear.
For illustrative purposes, we present examples of well-known models taken from molecular phylogenetics where this structural connection is present as well as some where it is absent. Additionally, uniformization stability may be of more general interest due to the potential to lead to improvements in the efficiency and numerical accuracy of matrix exponential computation.

Besides our theoretical discussions, the main goal of this article is to construct application-ready hierarchies of uniformizaton stable Markov models.
As we will argue, naively attempting to list all Markov models that form Jordan algebras is theoretically misguided and computationally infeasible; the problem being that there is actually a continuous infinity of such models available, whereas only a finite subset of these are plausibly of practical interest.
To this end, we adopt the method presented in \cite{sumnerLie} for the related context of `Lie-Markov' models and show that this allows us to systematically identify the finite set of interesting Markov models that form Jordan algebras.
The method given in that prior work relies on discrete model symmetries (specifically, equivalences of nucleotides under permutations) and applies the algebraic theory of group representations to systematically produce a finite hierarchy of Lie-Markov models, given a specific symmetry group of sufficient size.
In particular, the case where the nucleotides are pairwise equivalent leads to analysis with the symmetric group on four elements, $S_4$. 
In Section~\ref{sec:symmetry} below, we explore the situation for Jordan algebras, giving a complete characterisation for the general case of $n$ states.

As is developed below, the approach given in \cite{sumnerLie} was originally conceived for the construction of hierarchies of Lie-Markov models but is easily adaptable to our context of Jordan algebras.
We hence follow \cite{sumnerLie} in this work, with the following notable technicality and generalization.  

In practice, many Markov models on $n$ states are implicitly (if not explicitly) chosen to adhere to a finite group $G\leq S_n$ of state permutations.
As discussed above, imposing such symmetries reduces the number of candidate models from an unworkable continuous infinity to a finite set (as long as the symmetry group $G$ is sufficiently large).  
As we will describe in detail, this leads to a classification of uniformization stable Markov models with (so-called) $G$-symmetry. 
From a biological perspective, in the case of Markov models of DNA evolution (models on 4 states), demanding these symmetries is completely natural: for example, the group $D_4<S_4$ of dihedral permutations arises naturally from the partitioning of nucleotides into purine and pyrimidine substitutions, with the classification of Lie-Markov models with this symmetry presented in \cite{sumnerPurine}. 
Further, we relax an unnecessary restriction taken in \cite{sumnerLie} by removing the requirement that models with $G$-symmetry must also have a `permutation basis'.

In Section \ref{sec:structure} we rigorously define the previously discussed concepts and give all the necessary convex and algebraic structures relating to Markov rate matrices and useful basic properties of Jordan algebras of matrices. 
We also give the main result of this article (Theorem \ref{thm:jordanIffExponential}), showing that a linear Markov model is uniformization stable if and only if an underlying Jordan algebra is present. 
In Section \ref{sec:timeReversible} we apply this result to derive a hierarchy of (non-linear) time-reversible uniformization stable Markov models, and, in Section \ref{sec:symmetry}, we present for each $n\geq 2$ a precise characterisation of the hierarchy of $S_n$-symmetric linear uniformization stable models. 

\section{Algebraic structures of Markov matrices}
\label{sec:structure}
We begin with a detailed discussion of the two classes of matrices that we will be working with throughout. 
Although some authors define a `Markov model' as a collection of Markov probability matrices, consistent with the introductory remarks above, we focus on the continuous-time formulation and will define a Markov model in terms of rate matrices (or generators). 
In particular, Lemma \ref{lem:rateExponentialSubset} provides us with a natural way to associate a set of Markov matrices with a Markov model defined using rate matrices. 
This allows us to rigorously define a Markov model for the purposes of the rest of this article and will motivate the main topic of interest: `uniformization stable' Markov models (Definition~\ref{def:expclosed}).

As the matrices used in Markov chain theory represent stochastic concepts such as probabilities and substitution rates, we ultimately work in convex spaces and implement the appropriate non-negativity constraints to the relevant matrices.
However, when we consider these matrices as sets equipped with an algebraic structure, we will relax the non-negativity conditions as appropriate to our purposes. 
As an example to foreshadow what is to come: any (real) linear space of matrices containing a rate matrix $Q\neq 0$ with non-negative off-diagonal entries must also contain the matrix $-Q$, which is no longer a rate matrix.

For clarity, we presently define the concept of unit row sum and zero row sum matrices, then define Markov probability and rate matrices as subsets of these below. 
We also begin to consider the algebraic structure of these more general sets.

\begin{definition}
\label{def:unitRowSum}
 A \emph{unit row sum matrix} $M$ is an $n \times n$ matrix such that the entries in each row sum to $1$. 
 For fixed $n$, we denote the collection of these matrices as $\unitcolsum_{n}$. 
 That is, taking $\oneVec$ to denote the column vector of $n$ $1$'s:
\[
\unitcolsum_{n}= \{M\in \mat_n(\RR) \ : \ M \oneVec =\oneVec\} 
\]

\end{definition}

Since $\oneVec=M_1 \oneVec = M_1(M_2 \oneVec )=(M_1M_2) \oneVec $ holds for all $M_1,M_2\in \unitcolsum_n$, and the identity matrix $I_n\in \unitcolsum_n$, we have:

\begin{fact}
The set of real unit row sum matrices $\mat_n(\RR)$ forms a monoid --- that is, a semigroup with identity --- under standard matrix multiplication.

\end{fact}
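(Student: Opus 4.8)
The plan is to verify the three monoid axioms directly from Definition~\ref{def:unitRowSum}, exploiting the single defining relation $M\oneVec=\oneVec$. The only axiom requiring any work is closure under matrix multiplication, and this is precisely the one-line computation displayed immediately before the statement.

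First I would establish closure: given $M_1,M_2\in\unitcolsum_n$, associativity of matrix multiplication in $\mat_n(\RR)$ lets me write $(M_1M_2)\oneVec=M_1(M_2\oneVec)=M_1\oneVec=\oneVec$, so $M_1M_2\in\unitcolsum_n$. Thus the restriction of matrix multiplication on $\mat_n(\RR)$ to the subset $\unitcolsum_n$ is a well-defined binary operation. Second, associativity of this restricted operation is inherited at no cost, since matrix multiplication on $\mat_n(\RR)$ is associative and $\unitcolsum_n$ is a closed subset. Third, I would exhibit the unit: $I_n\oneVec=\oneVec$ gives $I_n\in\unitcolsum_n$, and $I_nM=MI_n=M$ for all $M\in\unitcolsum_n$, so $I_n$ serves as the identity element. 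Together these show $(\unitcolsum_n,\cdot)$ is a monoid.

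There is no genuine obstacle here; the content is entirely captured by the remark preceding the statement, and the argument is a routine sub-semigroup verification. The only point to keep in mind is to claim exactly a monoid and no more: $\unitcolsum_n$ is not a group, since a unit row sum matrix need not be invertible (for instance, the matrix all of whose rows equal $\tfrac1n\oneVec^{\!\top}$), so one should not attempt an inverse axiom.
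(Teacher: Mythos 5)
Your proposal is correct and follows the same route as the paper, which justifies the fact with exactly the computation $(M_1M_2)\oneVec=M_1(M_2\oneVec)=\oneVec$ together with the observation $I_n\in\unitcolsum_n$, with associativity inherited from $\mat_n(\RR)$. Nothing is missing.
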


Recall that a \emph{(real) affine combination} in a linear space $S$ is a linear combination $\lambda_1s_1 + \lambda_2 s_2 + \ldots + \lambda_n s_n$, where $s_1,s_2,\ldots,s_n \in S$, $\lambda_1,\lambda_2,\ldots,\lambda_n \in \RR$ and $\lambda_1+\lambda_2+\ldots+\lambda_n = 1$. 
We have,
\begin{fact}
The set of unit row sum matrices is closed under real affine combinations. That is, $\unitcolsum_n$ is an affine subset of $\mat_n(\RR)$.
\end{fact}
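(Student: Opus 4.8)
The plan is to argue directly from the defining equation $M\oneVec = \oneVec$ of Definition~\ref{def:unitRowSum}, rather than from the verbal description in terms of row sums. Fix an affine combination: let $M_1,\dots,M_k \in \unitcolsum_n$ and let $\lambda_1,\dots,\lambda_k \in \RR$ with $\lambda_1 + \cdots + \lambda_k = 1$, and set $M = \lambda_1 M_1 + \cdots + \lambda_k M_k$. The goal is to check that $M\oneVec = \oneVec$. I would do this by exploiting bilinearity of matrix--vector multiplication: $M\oneVec = \sum_{i=1}^k \lambda_i (M_i \oneVec) = \sum_{i=1}^k \lambda_i \oneVec = \big(\sum_{i=1}^k \lambda_i\big)\oneVec = \oneVec$, where the second equality is the hypothesis $M_i \in \unitcolsum_n$ and the final equality is the affine constraint on the coefficients. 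Hence $M \in \unitcolsum_n$, which is exactly the claim that $\unitcolsum_n$ is an affine subset of $\mat_n(\RR)$.

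There is no real obstacle here; the computation is a one-line consequence of linearity. The only point worth making explicit is the necessity of the constraint $\sum_i \lambda_i = 1$: for an arbitrary linear combination one obtains $M\oneVec = \big(\sum_i \lambda_i\big)\oneVec$, so the common row sum of $M$ equals the sum of the coefficients, and this equals $\oneVec$ precisely when the combination is affine. This simultaneously explains why $\unitcolsum_n$ is an affine subspace but not a linear one (it does not contain the zero matrix), which is consistent with the cautionary remark preceding Definition~\ref{def:unitRowSum} about relaxing non-negativity. One could instead verify only the two-term case $\lambda M_1 + (1-\lambda) M_2$ and induct on the number of terms, but since the $k$-term identity above is already immediate, no induction is needed.
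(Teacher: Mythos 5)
Your proof is correct: the one-line computation $M\oneVec=\sum_i\lambda_i(M_i\oneVec)=\bigl(\sum_i\lambda_i\bigr)\oneVec=\oneVec$ is exactly the standard argument, and it is the same (implicit) reasoning the paper relies on, since the paper states this Fact without a written proof immediately after recalling the definition of affine combination. Your side remark about why the coefficient constraint $\sum_i\lambda_i=1$ is needed (and why $\unitcolsum_n$ is affine but not linear) is accurate and harmless.
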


\begin{definition}
\label{def:zeroRowSum}
A \emph{zero row sum matrix} $Q$ is an $n \times n$ matrix such that the entries of each row of $Q$ sum to $0$. 
We denote the collection of all such matrices for fixed $n$ as follows:
$$\zerocolsum_{n}= \{Q \in \mat_n(\RR) \ : \ Q \oneVec=0 \}. $$

\end{definition}

In particular noting that $I_n\notin \zerocolsum_n$, we have:

\begin{fact}
\label{fact:rateSubAlgebra}
The zero row sum matrices $\zerocolsum_{n}$ form a matrix subalgebra of $\mat_n(\RR)$.
The right-annihilator of $\zerocolsum_n$ is non-trivial; hence $\zerocolsum_n$ is not a semisimple algebra.
Finally, $\zerocolsum_{n}$ contains a family of right identities, but no left identity.

Specifically, for all $Q, Q_1, Q_2 \in \zerocolsum_{n}$, $\lambda \in \RR$ we have, 
\begin{enumerate}
    \item  $Q_1 + \lambda Q_2 \in \zerocolsum_{n}$.
    \item $Q_1Q_2 \in \zerocolsum_{n}$.
    \item There exists (multiple) non-zero $B\in \zerocolsum_n$ such that $QB=0$.
    \item There exists (multiple) $ R\in \zerocolsum_n$ such that $QR=Q$.
    \item There is no non-zero $C\in \zerocolsum_n$ such that $CQ=0$.
    \item There is no $L\in \zerocolsum_n$ such that $LQ=Q$.
\end{enumerate}

\end{fact}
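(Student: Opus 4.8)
The six claims split naturally into two routine algebraic parts and the genuinely substantive structural claims (items 3 and 4). For items 1 and 2, note that $\zerocolsum_n$ is the kernel of the linear map $\mat_n(\RR)\to\RR^n$, $Q\mapsto Q\oneVec$; closure under linear combinations (item 1) is immediate, and for item 2 we compute $(Q_1Q_2)\oneVec = Q_1(Q_2\oneVec) = Q_1\cdot 0 = 0$, so $Q_1Q_2\in\zerocolsum_n$. Together these give that $\zerocolsum_n$ is a matrix subalgebra. For items 5 and 6, the key observation is that a matrix in $\zerocolsum_n$ can still have full rank on a complementary subspace: more precisely, I would exhibit a single $Q\in\zerocolsum_n$ that is invertible as a map on the hyperplane $\oneVec^\perp$ (equivalently, $Q$ has rank $n-1$ with kernel exactly $\spam\{\oneVec\}$), for instance a suitable circulant-type or explicitly written rate-like matrix. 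Then $CQ=0$ with $C\in\zerocolsum_n$ forces $C$ to vanish on the image of $Q$, which is all of $\oneVec^\perp$; since also the rows of $C$ sum to zero (i.e. $\oneVec^T$ is orthogonal to... no — rather, each row of $C$ lies in $\oneVec^\perp$ as a *row* constraint), one checks $C=0$, giving item 5. Item 6 follows similarly: $LQ=Q$ means $(L-I_n)Q=0$, so by item 5 applied to this specific $Q$ we'd get $L=I_n\notin\zerocolsum_n$, a contradiction; hence no such $L$ exists. (One must be slightly careful that item 5 as literally stated quantifies over *all* $Q$, so I would prove it by first establishing it for one full-rank-on-$\oneVec^\perp$ example and noting the statement "there is no nonzero $C$ such that $CQ=0$ for this $Q$" already refutes a universal left-annihilator; but in fact the cleanest reading is that items 5–6 assert the nonexistence of a *uniform* left annihilator/identity, and the single good $Q$ suffices.)

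**The substantive claims.** Items 3 and 4 are where the structure lies. For item 4, I want right identities: an $R\in\zerocolsum_n$ with $QR=Q$ for all $Q$, or at least for a given $Q$. The natural candidate is $R = I_n - \tfrac1n \oneVec\,\mathbf{1}^T$ (the orthogonal projection onto $\oneVec^\perp$ scaled appropriately), or more generally $R = I_n - \oneVec v^T$ for any row vector $v$ with $v\oneVec = 1$; then $R\oneVec = \oneVec - \oneVec(v\oneVec) = 0$ so $R\in\zerocolsum_n$, and for $Q\in\zerocolsum_n$ we get $QR = Q - (Q\oneVec)v^T = Q - 0 = Q$. This simultaneously gives "multiple" such $R$ (one for each valid $v$), proving item 4 and, incidentally, that $\zerocolsum_n$ is not unital on the left while having many right units. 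For item 3, I need a nonzero $B\in\zerocolsum_n$ with $QB=0$; but $QB=0$ means every column of $B$ lies in $\ker Q$, and I should produce a $B$ that works for a given $Q$ — taking $B = \oneVec w^T$ for a suitable row vector $w$ would give $B\oneVec = \oneVec(w\oneVec)$, which is zero only if $w\oneVec=0$, and then $QB = (Q\oneVec)w^T = 0$ automatically. So $B=\oneVec w^T$ with $w\neq 0$, $w\oneVec=0$ is nonzero (since $n\geq 2$, such $w$ exists), lies in $\zerocolsum_n$, and right-annihilates *every* $Q\in\zerocolsum_n$; this proves item 3 and shows the right annihilator is nontrivial, hence $\zerocolsum_n$ is not semisimple.

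**Main obstacle.** None of the steps is hard once the right ansatz ($R = I_n - \oneVec v^T$, $B = \oneVec w^T$) is in hand; the only place requiring care is item 5 (equivalently the non-existence of a left identity in item 6), because there one cannot rely on a cheap rank count alone — I must genuinely produce or invoke a $Q\in\zerocolsum_n$ whose restriction to $\oneVec^\perp$ is bijective and then argue that $C$ with $CQ=0$ and $C\oneVec=0$ forces $C=0$ by decomposing $\RR^n = \spam\{\oneVec\}\oplus\oneVec^\perp$ and checking the action of $C$ on each summand. Concretely, I expect to take $Q$ to be (a scalar multiple of) a matrix such as the generator of a single-parameter model whose only zero eigenvalue is simple — e.g. $Q = P - I_n$ where $P$ is a positive stochastic matrix with all eigenvalues of modulus $<1$ except the Perron eigenvalue $1$ — so that $\ker Q = \spam\{\oneVec\}$ exactly. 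That is the one genuinely model-theoretic input; everything else is linear algebra on the hyperplane $\oneVec^\perp$.
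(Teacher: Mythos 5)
Your items (1)--(4) are correct and in fact coincide with the paper's own constructions: your annihilators $B=\oneVec w^T$ (with $w^T\oneVec=0$) and right identities $R=I_n-\oneVec v^T$ (with $v^T\oneVec=1$) are exactly the paper's $B$ and $-J+B$. The genuine gap is in item (6). You set $C:=L-I_n$ and ``apply item 5'', but item (5) --- both as stated and as you argue it --- concerns matrices in $\zerocolsum_n$, and $L-I_n\notin\zerocolsum_n$ since $(L-I_n)\oneVec=-\oneVec\neq 0$; the hypothesis $C\oneVec=0$ that your item-(5) argument needs therefore fails, and the conclusion $L=I_n$ does not follow. Worse, the single-good-$Q$ strategy cannot be patched here: for your chosen $Q=P-I_n$ with $P$ positive stochastic and stationary distribution $\pi$ (so $\pi^TQ=0$, $\pi^T\oneVec=1$), the matrix $L:=I_n-\oneVec\pi^T$ lies in $\zerocolsum_n$ and satisfies $LQ=Q-\oneVec(\pi^TQ)=Q$; similarly $(-J)J=J$ because $J^2=-J$. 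So for every such exhibit $Q$ there genuinely is an $L\in\zerocolsum_n$ with $LQ=Q$: item (6) is irreducibly a statement about all $Q$ simultaneously and cannot be refuted by one matrix. The paper's fix is one line and uses the universal quantifier through item (3): if $L$ were a left identity, then $B=LB=(L\oneVec)w^T=0$ for the nonzero annihilator $B$, a contradiction.

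Two smaller points. In item (5) you assert that the image of $Q=P-I_n$ is $\oneVec^\perp$; it is actually $\pi^\perp$, the orthogonal complement of the left Perron vector, which generally differs from $\oneVec^\perp$. Your argument survives because $\pi^T\oneVec=1\neq 0$, so $\mathrm{im}(Q)$ is still complementary to $\spam_{\RR}(\oneVec)$ and a $C\in\zerocolsum_n$ vanishing on both is zero (or simply take the symmetric choice $Q=J$, whose image really is $\oneVec^\perp$). Your reading of (5)--(6) as ruling out a \emph{uniform} left annihilator and left identity matches the paper's intent; note, though, that the paper proves the uniform (5) with no spectral input at all: if $CQ=0$ for every $Q\in\zerocolsum_n$, then $C=CR=0$ for any right identity $R$ from item (4).
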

\begin{proof}
Consider $Q_1, Q_2 \in \zerocolsum_{n}$ and $\lambda \in \RR $. Then,
\[
(Q_1+\lambda Q_2)\oneVec  =Q_1\oneVec  +  (\lambda Q_2)\oneVec=0+\lambda ( Q_2\oneVec )=\lambda \cdot 0=0
= Q_2 \oneVec
=Q_1 (Q_2\oneVec)=
(Q_1Q_2)\oneVec ,
\]
hence $(1.)$ and $(2.)$ are valid.

Consider the family of matrices 
\[
B=
\begin{pmatrixT}
\alpha_1 & \alpha_1 & \vdots & \alpha_1\\
\alpha_2 & \alpha_2 & \vdots & \alpha_2\\
\ldots & \ldots &\ddots & \ldots \\
\alpha_n & \alpha_n & \vdots & \alpha_n
\end{pmatrixT}
\in \zerocolsum_n,
\]
with $\sum_{i=1}^n\alpha_i=0$.
The row sum condition implies $QB=0$ for all $Q\in \zerocolsum_n$; and hence $(3.)$ holds, which, in particular, establishes that $\zerocolsum_n$ is not semisimple. 

Consider the $n\times n$ Markov matrix $H= \textstyle{\frac{1}{n}}\oneVec \oneVec^T \in \unitcolsum_n$, which has every entry equal to $\textstyle{\frac{1}{n}}$, and observe that $J:=H-I_n\in \zerocolsum_n$ is a rate matrix.
Taking $Q\in \zerocolsum_n$ and any $B$ as above, we compute
\[
Q(-J+B)=-QH+Q+QB=-0+Q+0=Q,
\]
and hence set $R:=-J+B$ and apply $(1.)$ to see that $(4.)$ holds.

Now suppose $C\in \zerocolsum_n$ satisfies $CQ=0$ for all $Q\in \zerocolsum_n$.
Using $(4.)$, we immediately obtain $C=CR=0$ and hence conclude that $(5.)$ is valid.

Finally, suppose $L\in\zerocolsum_n$ is a left identity in $\zerocolsum_n$. 
Again taking any $B\neq 0$ as above, we find:
\[
B=LB=0,
\]
which is a contradiction and we conclude that $(6.)$ holds.

\end{proof}

The interested reader can readily verify that \emph{all} right identities of $\zerocolsum_n$ actually occur in the form $-J+B$, as presented in the proof of Fact~\ref{fact:rateSubAlgebra}.
We have also drawn specific attention to the matrix $J=H-I_n=\textstyle{\frac{1}{n}}\oneVec \oneVec^T -I_n\in \zerocolsum_n$, as this will play a fundamental role in Section \ref{sec:symmetry}, particularly to the proof of Lemma \ref{lem:trivialMarkovModel}.
For the moment, one may observe that $-J$ is the unique right identity of $\zerocolsum_n$ that is invariant to simultaneous row and column permutations, and, in particular, $J^2=-J$.

 By adding appropriate non-negativity conditions, the $n\times n$ Markov matrices and $n\times n$ rate matrices (or generators) are naturally defined as subsets of the sets discussed thus far:

\begin{definition}
\label{def:generalMarkov}
 A \emph{Markov matrix} $M$ is a non-negative $n \times n$ unit row sum matrix. 
 We denote the collection of these matrices as $\unitcolsum_{n}^+$. 
 Of course $\unitcolsum_{n}^+ \subset \unitcolsum_{n}$ for all $n>1$.

 \end{definition}
 
Noting that the non-negativity implies the entries of a Markov matrix must actually lie in $[0,1]$, we have:
\[
\unitcolsum_{n}^+= \{M = (m_{ij})\in \mat_n(\RR) \ : \ M \oneVec =\oneVec , \ m_{ij} \in [0,1] \ \forall \ i,j=1,2,...,n \},
\]
and since the identity matrix $I_n\in \unitcolsum_n^+$ and non-negativity and unit row sum conditions are preserved under matrix multiplication, we have:

\begin{fact}
\label{fact:markovMonoid}
The set of Markov matrices forms a monoid under standard matrix multiplication.
\end{fact}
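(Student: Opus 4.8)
The plan is to verify the three monoid axioms directly, reusing the facts already established for $\unitcolsum_n$ and $\unitcolsum_n^+$. Associativity is immediate, since matrix multiplication is associative on all of $\mat_n(\RR)$ and hence on any subset; no further comment is needed. It therefore remains only to check that $\unitcolsum_n^+$ is closed under multiplication and that it contains a two-sided identity.

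For closure I would take $M_1 = (a_{ij})$ and $M_2 = (b_{ij})$ in $\unitcolsum_n^+$ and split the argument in two. First, the unit row sum property passes to the product exactly as in the computation displayed just before the statement: $(M_1M_2)\oneVec = M_1(M_2\oneVec) = M_1\oneVec = \oneVec$, so $M_1M_2 \in \unitcolsum_n$. Second, each entry $(M_1M_2)_{ij} = \sum_{k=1}^n a_{ik} b_{kj}$ is a sum of products of non-negative reals, hence non-negative; combined with the unit row sum property just checked, this forces every entry of $M_1M_2$ into $[0,1]$, so $M_1M_2 \in \unitcolsum_n^+$. For the identity, I would note that $I_n$ has non-negative entries and satisfies $I_n\oneVec = \oneVec$, whence $I_n \in \unitcolsum_n^+$, while $I_n M = M I_n = M$ holds for every $M$.

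There is no genuine obstacle here: the statement is really just the intersection of the earlier observation that $\unitcolsum_n$ is a monoid with the (multiplicatively closed, identity-containing) set of entrywise non-negative matrices. The only mild point worth making explicit is that closure invokes both ingredients simultaneously --- non-negativity of the summands to get non-negativity of $M_1M_2$, and the row sum identity to then confine its entries to $[0,1]$ --- but neither step is delicate.
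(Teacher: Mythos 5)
Your proof is correct and follows the same route the paper takes: it verifies that $I_n\in\unitcolsum_n^+$ and that both the unit row sum and non-negativity conditions are preserved under multiplication, which is exactly the one-line justification the paper gives immediately before stating the fact. The extra detail you supply (the $\oneVec$ computation for row sums and entrywise non-negativity of the product) is a fine expansion of that same argument, with no divergence in method.
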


Further, recalling that a \emph{convex combination} is an affine combination where the scalars are restricted to be non-negative, we find:

\begin{fact}
\label{fact:markovConvex}
The set of Markov matrices is closed under convex combinations. 
That is $\unitcolsum_{n}^+$ is a convex subset of $\mat_{n}(\RR)$.
\end{fact}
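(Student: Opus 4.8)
The plan is to reduce the statement to two things we already have in hand: the earlier Fact that $\unitcolsum_n$ is an affine subset of $\mat_n(\RR)$, and the elementary observation that a non-negative combination of non-negative numbers is non-negative. First I would fix Markov matrices $M_1,\dots,M_k\in\unitcolsum_n^+$ and scalars $\lambda_1,\dots,\lambda_k\in\RR$ with $\lambda_i\geq 0$ for all $i$ and $\lambda_1+\dots+\lambda_k=1$, and set $M:=\lambda_1 M_1+\dots+\lambda_k M_k$. The goal is to show $M\in\unitcolsum_n^+$.

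For the row sum condition, I would simply note that a convex combination is in particular an affine combination, so the earlier Fact that $\unitcolsum_n$ is closed under affine combinations gives $M\oneVec=\oneVec$ immediately, with no computation needed. For non-negativity, I would argue entrywise: writing $M_i=(m^{(i)}_{jk})$, the $(j,k)$ entry of $M$ is $\sum_{i=1}^k \lambda_i m^{(i)}_{jk}$, a finite sum of products of non-negative reals (here using $\lambda_i\geq 0$ and $m^{(i)}_{jk}\geq 0$), hence non-negative. Combining this with $M\oneVec=\oneVec$ and the remark following Definition~\ref{def:generalMarkov} that the unit row sum condition forces the entries of a non-negative unit row sum matrix into $[0,1]$, we conclude $M\in\unitcolsum_n^+$, which is exactly convexity.

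There is no genuine obstacle here; the statement is routine once the affine-closure Fact is invoked. The only point worth stating explicitly is that the hypothesis $\lambda_i\geq 0$ is precisely what upgrades the affine case to the convex case: an affine combination of Markov matrices need not be non-negative (for instance $2M_1-M_2$ can have negative entries), so non-negativity of the coefficients is doing essential work in the second step and should not be glossed over.
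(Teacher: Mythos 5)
Your proposal is correct and matches the argument the paper intends: the paper states this Fact without proof, immediately after recalling that a convex combination is an affine combination with non-negative scalars, so the intended justification is exactly your combination of the affine-closure Fact for $\unitcolsum_n$ with entrywise non-negativity of non-negative combinations. Your explicit remark that the non-negativity of the coefficients is what upgrades the affine case to the convex case is a sensible addition but does not change the approach.
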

As the entries of Markov matrices are constrained by the closed interval $[0,1]$, we also have that:
\begin{fact}
\label{fact:markovLimits}
The set of Markov matrices is a topologically closed subset of $\text{Mat}_n(\RR)$ (i.e. it is closed under limits).
\end{fact}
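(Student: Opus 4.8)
The plan is to exhibit $\unitcolsum_n^+$ as an intersection of sets that are already (or easily seen to be) closed, and then invoke the fact that an arbitrary intersection of closed sets is closed. Recall from the discussion following Definition~\ref{def:generalMarkov} that
\[
\unitcolsum_n^+ = \{M = (m_{ij}) \in \mat_n(\RR) \ : \ M\oneVec = \oneVec, \ m_{ij}\in[0,1] \ \forall \ i,j=1,2,\ldots,n\}.
\]
The key observation is that each of the two defining conditions carves out a closed set.

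For the row sum condition: the map $\Phi\colon \mat_n(\RR)\to\RR^n$ given by $\Phi(A) = A\oneVec$ is linear, hence continuous, and so $\unitcolsum_n = \Phi^{-1}(\{\oneVec\})$ is the preimage of a singleton, which is closed. For the entrywise bounds: for each pair $(i,j)$ the coordinate projection $\pi_{ij}\colon \mat_n(\RR)\to\RR$, $A\mapsto a_{ij}$, is continuous, so $\pi_{ij}^{-1}([0,1])$ is closed since $[0,1]\subset\RR$ is closed. Consequently
\[
\unitcolsum_n^+ = \unitcolsum_n \cap \bigcap_{i,j=1}^n \pi_{ij}^{-1}([0,1])
\]
is a finite intersection of closed sets, hence closed.

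Equivalently, one can phrase this sequentially: given $M^{(k)}\to M$ in $\mat_n(\RR)$ with each $M^{(k)}\in\unitcolsum_n^+$, continuity of $\Phi$ gives $M\oneVec = \lim_k M^{(k)}\oneVec = \oneVec$, and continuity of each $\pi_{ij}$ together with closedness of $[0,1]$ gives $m_{ij} = \lim_k m_{ij}^{(k)}\in[0,1]$, so $M\in\unitcolsum_n^+$. There is no genuine obstacle here; the only point worth care is to use the characterisation of $\unitcolsum_n^+$ that records the bound $m_{ij}\in[0,1]$ explicitly (as noted in the excerpt, non-negativity together with the unit row sum constraint already forces the entries into $[0,1]$), so that the closed box $[0,1]$ appears directly. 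We note in passing that, being in addition bounded, $\unitcolsum_n^+$ is in fact compact.
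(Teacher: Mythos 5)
Your proof is correct, and it simply makes rigorous the one-line observation the paper relies on (the Fact is stated without a formal proof, preceded only by the remark that the entries are constrained to the closed interval $[0,1]$): the unit row sum condition is the preimage of a point under a continuous linear map and the entrywise bounds are preimages of $[0,1]$ under continuous coordinate projections, so $\unitcolsum_n^+$ is a finite intersection of closed sets. No gaps; the closing remark that $\unitcolsum_n^+$ is in fact compact is a correct bonus, though not needed elsewhere in the paper.
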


In the zero row sum case, we have:
\begin{definition}
\label{def:markovRateMatrix}
A \emph{rate matrix} $Q$ is an $n \times n$ zero row sum matrix such that each non-diagonal entry is non-negative. 
For fixed $n$, we denote the set of these matrices as $\zerocolsum_{n}^+$. 

\end{definition}
We may write, 
\[
\zerocolsum_{n}^+= \{Q=(q_{ij}) \in \mat_n(\RR) \ : \ Q \oneVec =0, \ q_{ij} \geq 0 \ \forall \ i,j=1,2,...,n, \ i \neq j \}\subset \zerocolsum_{n},
\]
and, recalling that a \emph{conical combination} in a real linear space is a linear combination where the coefficients are restricted to be non-negative, we have:
\begin{fact}
\label{fact:ratematricesconical}
The set of rate matrices $\zerocolsum_n^+$ is closed under conical combinations.
Thus, $\zerocolsum_n^+$ is a \emph{convex cone}.
\end{fact}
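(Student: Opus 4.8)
The plan is to verify directly that the two defining conditions of a rate matrix --- zero row sums and non-negative off-diagonal entries --- are each preserved under conical combinations, and then to observe that closure under conical combinations is exactly the statement that $\zerocolsum_n^+$ is a convex cone.

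Concretely, I would take rate matrices $Q_1,\ldots,Q_k\in\zerocolsum_n^+$ and non-negative scalars $\lambda_1,\ldots,\lambda_k\geq 0$, and set $Q:=\sum_{i=1}^k\lambda_i Q_i$. For the row-sum condition, part (1.) of Fact~\ref{fact:rateSubAlgebra} (applied inductively, or simply the fact that $\zerocolsum_n$ is a linear space) gives $Q\oneVec=\sum_i\lambda_i(Q_i\oneVec)=\sum_i\lambda_i\cdot 0=0$, so $Q\in\zerocolsum_n$. For the off-diagonal condition, writing $Q_i=(q^{(i)}_{jl})$, the $(j,l)$ entry of $Q$ for $j\neq l$ is $\sum_i\lambda_i q^{(i)}_{jl}$, which is a sum of products of non-negative reals (each $\lambda_i\geq 0$, each $q^{(i)}_{jl}\geq 0$ by definition of a rate matrix) and hence non-negative. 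Thus $Q\in\zerocolsum_n^+$.

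With closure under conical combinations in hand, the ``convex cone'' conclusion is immediate: a subset of a real linear space is a convex cone precisely when it is closed under conical combinations (equivalently, closed under addition and under multiplication by non-negative scalars), and $\zerocolsum_n^+$ is non-empty --- it contains $0$, and indeed the matrix $J$ discussed above. One can also remark that ordinary convexity follows as the special case $\sum_i\lambda_i=1$. There is no genuine obstacle here; the argument is a routine entrywise check, so the only points requiring care are stating which definition of ``convex cone'' is being used and noting that the degenerate cases (empty index set, all scalars zero) are subsumed because $0\in\zerocolsum_n^+$.
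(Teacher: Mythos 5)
Your proof is correct and matches the intended argument: the paper states this as a Fact without proof precisely because the routine entrywise verification you give (zero row sums are preserved by linearity, non-negativity of off-diagonal entries is preserved by non-negative scalars) is the whole content. No gaps; your remarks on the degenerate cases and the definition of convex cone are fine but not essential.
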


The following results provide the explicit connection between rate matrices and Markov matrices in the context of a continuous-time Markov chain. 
We also explicitly define what we mean by a Markov model.

\begin{lemma}
\label{lem:rateExponentialSubset}
Given a subset $\zerocolsum \subseteq \zerocolsum_{n}$, the set 

$$e^{\zerocolsum} := \{e^Q \ : \ Q \in \zerocolsum \} $$
is a subset of $\unitcolsum_{n}$.
\end{lemma}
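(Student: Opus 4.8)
The plan is to verify directly that every matrix of the form $e^Q$ with $Q\in\zerocolsum$ has unit row sums, i.e.\ that $e^Q\oneVec=\oneVec$; since $\zerocolsum\subseteq\zerocolsum_n$ and membership in $\zerocolsum_n$ is by definition the condition $Q\oneVec=0$, the whole argument hinges on the fact that $Q$ annihilates the all-ones vector. First I would record the elementary consequence that $Q^k\oneVec=0$ for every integer $k\geq 1$: this follows by induction, since $Q^k\oneVec=Q^{k-1}(Q\oneVec)=Q^{k-1}\cdot 0=0$. For the exponent $k=0$ we instead have $Q^0\oneVec=I_n\oneVec=\oneVec$.

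Next I would apply the defining power series $e^Q=\sum_{k=0}^{\infty}\frac{1}{k!}Q^k$ to the vector $\oneVec$. Writing $S_N=\sum_{k=0}^{N}\frac{1}{k!}Q^k$ for the partial sums, the computation above gives $S_N\oneVec=\oneVec+\sum_{k=1}^{N}\frac{1}{k!}\cdot 0=\oneVec$ for every $N$. Since the series defining $e^Q$ converges to $e^Q$ in any norm on $\mat_n(\RR)$ (it is absolutely convergent in finite dimensions) and multiplication by the fixed vector $\oneVec$ is a continuous linear map, we obtain $e^Q\oneVec=\lim_{N\to\infty}S_N\oneVec=\oneVec$. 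Hence $e^Q\in\unitcolsum_n$, and as $Q\in\zerocolsum$ was arbitrary this yields $e^{\zerocolsum}\subseteq\unitcolsum_n$.

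The only mildly delicate point is the interchange of the infinite summation with multiplication by $\oneVec$, which I would justify with the continuity argument just sketched; concretely, $\lVert S_N\oneVec-e^Q\oneVec\rVert\leq\lVert S_N-e^Q\rVert\,\lVert\oneVec\rVert\to 0$. Everything else is a one-line induction, so I expect no real obstacle here. I would also remark that this lemma makes no non-negativity assertion: the cone and convexity structure of $\zerocolsum_n^+$ and $\unitcolsum_n^+$ plays no role in the proof and only becomes relevant when one later restricts attention to rate matrices and genuine Markov matrices.
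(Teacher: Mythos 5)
Your argument is correct: $Q\oneVec=0$ gives $Q^k\oneVec=0$ for all $k\geq 1$ by induction, the partial sums of the exponential series therefore fix $\oneVec$, and continuity of the linear map $A\mapsto A\oneVec$ lets you pass to the limit, so $e^Q\oneVec=\oneVec$ and $e^Q\in\unitcolsum_n$. The paper reaches the same conclusion through the same power series but organises it at the matrix level rather than the vector level: invoking Fact~\ref{fact:rateSubAlgebra}, each term $\frac{Q^k}{k!}$ with $k\geq 1$ lies in $\zerocolsum_n$, and since $\zerocolsum_n$ is a (topologically closed) subspace the tail of the series converges to some $\hat{Q}\in\zerocolsum_n$, giving the decomposition $e^Q=I_n+\hat{Q}$ before multiplying by $\oneVec$. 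Your route is slightly more elementary --- it needs only $Q\oneVec=0$ and continuity, not the subalgebra structure or closedness of $\zerocolsum_n$ --- whereas the paper's route buys the stronger intermediate statement that $e^Q-I_n$ is itself a zero row sum matrix, which is exactly the structural form ($e^{Qt}-I_n=\hat{Q}$) that Definition~\ref{def:expclosed} and the uniformization stability theme of the article are built around. Your closing remark that non-negativity plays no role here is also consistent with the paper, which defers that point to Lemma~\ref{lem:positiveRateExponentialSubset}.
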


\begin{proof}
Consider an arbitrary element $M \in e^{\zerocolsum}$. Then, $M=e^Q$ for some $Q \in \zerocolsum_{n}$. By the definition of the matrix exponential,

$$M=e^Q=\sum_{i=0}^{\infty}\frac{Q^i}{i!}=I_n+\left(Q+\frac{Q^2}{2!}+\frac{Q^3}{3!}+\ldots\right).$$

Now by Fact \ref{fact:rateSubAlgebra}, each $\frac{Q^k}{k!}\in \zerocolsum_n$ and, 
as the series representation of the exponential map has infinite radius of convergence, we have $e^Q=I_n+\hat{Q}$ for some $\hat{Q} \in \zerocolsum_{n}$. Observe that $e^Q\oneVec=I_n\oneVec+\hat{Q}\oneVec=\oneVec$, and hence $M=e^Q \in \unitcolsum_{n}$.

\end{proof}

Of course, the more specific result that $e^Q\in \unitcolsum_n^+$ whenever $Q\in \zerocolsum_n^+$ is also valid.
We establish this presently but in a way that preempts our generalization to model-specific contexts (Definition~\ref{def:expclosed}, below).

\begin{definition}
\label{def:markovModel}

A \emph{Markov model} $\zerocolsum^+$ is any subset of $\zerocolsum_n^+$.
We say a Markov model is \emph{linear} if $\zerocolsum^+:=\zerocolsum \cap \zerocolsum_{n}^+$, where $\zerocolsum$ is a linear subspace of $\zerocolsum_{n}$.

\end{definition}

In practice many of the Markov models that are used in phylogenetics, for example, can be viewed as $\zerocolsum^+:=\zerocolsum \cap \zerocolsum_{n}^+$, where $\zerocolsum\subseteq \zerocolsum_n$ is defined using a fixed set of polynomial constraints on the entries of each matrix in $\zerocolsum$ (in other words, $\zerocolsum$ is an \emph{algebraic variety} in $\mat(n,\RR)\cong \RR^{n^2}$).
Under Definition~\ref{def:markovModel}, the linear Markov models are a particular subclass of these. 
We give an example of a non-linear Markov model in Example \ref{exam:nonLinearModel}, and linear Markov models are demonstrated in Examples \ref{exam:nonMinimalModel} and \ref{exam:jordanUniformization}.

We pause here to confirm that the exponential map sends rate matrices to Markov matrices:

\begin{lemma}
\label{lem:positiveRateExponentialSubset}
For any Markov model $\zerocolsum^+\subseteq \zerocolsum_n^+$, it follows that:

$$e^{\zerocolsum^+} = \left\{e^Q \ : \ Q \in \zerocolsum^+ \right\}\subseteq \unitcolsum_{n}^+.$$
\end{lemma}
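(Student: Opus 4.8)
The plan is to show that for any $Q \in \zerocolsum_n^+$, the matrix $e^Q$ is a Markov matrix, i.e. it has non-negative entries and unit row sums; since $\zerocolsum^+ \subseteq \zerocolsum_n^+$ for any Markov model, this immediately gives $e^{\zerocolsum^+} \subseteq \unitcolsum_n^+$. The unit row sum condition is already handled by Lemma~\ref{lem:rateExponentialSubset}, which tells us $e^Q \in \unitcolsum_n$ whenever $Q \in \zerocolsum_n \supseteq \zerocolsum_n^+$. So the only thing left to establish is non-negativity of the entries of $e^Q$.

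For non-negativity, the standard trick is to shift $Q$ by a multiple of the identity. Pick a scalar $\lambda \geq 0$ large enough that $\lambda I_n + Q$ has all entries non-negative --- this is possible because only the diagonal entries of $Q$ can be negative, and there are finitely many of them, so $\lambda = \max_i(-q_{ii})$ (or anything larger) works. Write $P := \lambda I_n + Q$, a matrix with all entries $\geq 0$. Then $Q = P - \lambda I_n$, and since $P$ and $\lambda I_n$ commute,
\[
e^Q = e^{P - \lambda I_n} = e^{-\lambda I_n} e^{P} = e^{-\lambda} e^{P}.
\]
Now $e^{-\lambda} > 0$, and $e^P = \sum_{k=0}^\infty P^k/k!$ is a convergent sum of matrices each of which has non-negative entries (products and non-negative scalar multiples of entrywise-non-negative matrices are entrywise non-negative), hence $e^P$ has non-negative entries. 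Therefore $e^Q = e^{-\lambda} e^P$ has non-negative entries.

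Combining the two observations: $e^Q$ has unit row sums (from Lemma~\ref{lem:rateExponentialSubset}) and non-negative entries (from the shift argument), so $e^Q \in \unitcolsum_n^+$. As $Q \in \zerocolsum^+$ was arbitrary, $e^{\zerocolsum^+} \subseteq \unitcolsum_n^+$, as claimed. I do not anticipate any real obstacle here; the only point requiring a modicum of care is justifying that the entrywise-non-negativity of partial sums $\sum_{k=0}^N P^k/k!$ passes to the limit, which follows since each entry of $e^P$ is a limit of a non-decreasing (hence convergent, with non-negative limit) sequence of non-negative reals, and the convergence is guaranteed by the infinite radius of convergence of the exponential series.
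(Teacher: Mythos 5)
Your proposal is correct, but it takes a different route from the paper. The paper's proof avoids the exponential series altogether: it invokes the limit formula $e^{Q}=\lim_{k\to\infty}\bigl(I_n+\tfrac{1}{k}Q\bigr)^{k}$, observes that $I_n+\tfrac{1}{k}Q$ is a Markov matrix for $k$ sufficiently large, and then concludes by the facts that $\unitcolsum_n^+$ is a monoid (Fact~\ref{fact:markovMonoid}) and is topologically closed (Fact~\ref{fact:markovLimits}); this phrasing is deliberately chosen to foreshadow the uniformization viewpoint of Definition~\ref{def:expclosed}. You instead write $Q=P-\lambda I_n$ with $P=\lambda I_n+Q$ entrywise non-negative and use $e^{Q}=e^{-\lambda}e^{P}$, getting non-negativity from the series for $e^{P}$ and unit row sums from Lemma~\ref{lem:rateExponentialSubset}. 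Your argument is sound (the shift is valid since only the diagonal entries of a rate matrix can be negative, the commuting-shift identity is standard, and entrywise non-negativity does pass to the limit of the monotone partial sums), and amusingly it is precisely the uniformization decomposition $e^{Qt}=e^{-\lambda t}e^{\lambda R t}$ with $R=I_n+\tfrac{1}{\lambda}Q$ that the paper introduces later to motivate its terminology, so your route arguably makes that connection even more explicit; what it does not do is exercise the monoid and closure facts the paper has just set up, which is the expository purpose of the paper's choice. Either proof is acceptable.
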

\begin{proof}
Consider any $Q \in \zerocolsum^+$. 
Recall that the exponential map may be expressed as
$e^{Q}=\lim_{k \rightarrow \infty} (I+\frac{1}{k}Q)^k$ \cite[Theorem 10.1]{higham}.
Now, for sufficiently large $k$, the matrix $I+\frac{1}{k}Q$ is an element of $\unitcolsum_{n}^+$ and hence, as $\unitcolsum_{n}^+$ forms a monoid and is closed under limits (Facts \ref{fact:markovMonoid} and \ref{fact:markovLimits}), it follows that $e^{Q} \in \unitcolsum_n^+$. 
\end{proof}

It is worth noting here that, in general, $e^{\zerocolsum^+}$ is not closed under multiplication and hence is not a submonoid of $\unitcolsum_n^+$.
In particular the embedding problem produces several such notable examples (see \cite{sumnerEmbed}).

While we may consider any $\zerocolsum^+\subseteq \zerocolsum_n^+$ to be a Markov model, we usually assume $\zerocolsum^+=\zerocolsum\cap \zerocolsum_n^+$ and examine the algebraic structures present in $\zerocolsum\subseteq \zerocolsum_n$. 
In particular, the main classification result we present (Theorem~\ref{thm:jordanIffExponential} below) is formulated precisely for linear Markov models, in the sense of Definition~\ref{def:markovModel}.

Now, considering Definition \ref{def:markovModel} and Lemma \ref{lem:positiveRateExponentialSubset}, we can obtain the Markov matrices associated to a model by computing $M=e^{Qt}$ for each $t\geq 0$ and $Q \in \zerocolsum^+$. 
Previous work in \cite{sumnerLie} explored the connection between a set of zero row sum matrices $\zerocolsum$ forming a Lie algebra and the multiplicative closure of the set of corresponding Markov matrices $e^{\zerocolsum^+}$. Specifically, $\zerocolsum$ forms a Lie algebra if and only if $e^{\zerocolsum^+}$ is locally multiplicatively closed \cite{sumnerClosed}.

We now define and provide the appropriate abstract definition for the main algebraic structure of interest for the present article:
\begin{definition}
\label{def:jordanAlgebra}
A \emph{Jordan algebra} $\mathcal{J}$ is a vector space equipped with the bilinear product $\odot$, satisfying, for all $x,y \in \mathcal{J}$:
\begin{enumerate}
    \item $ x \odot y \in \mathcal{J} $
    \item $ x \odot y = y \odot x $
    \item $ (x \odot y)\odot(x \odot x) = x \odot (y \odot (x \odot x))$
\end{enumerate}

\end{definition}

We refer to a \emph{matrix algebra} as any subspace of $n\times n$ matrices that is closed under standard matrix multiplication.
With this in mind, we have the following easily understood realisation of the Jordan algebra concept:

\begin{lemma}
\label{lem:jordanProduct}
A subspace $\mathcal{J}$ of $n \times n$ matrices forms a Jordan algebra if it is closed under the bilinear product, defined for all $A,B \in \mathcal{J}$, as:
\[
A \odot B := A B  + BA
\]

\end{lemma}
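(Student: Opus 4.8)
The plan is to verify directly that the three defining properties in Definition~\ref{def:jordanAlgebra} hold for $(\mathcal{J},\odot)$ with $A\odot B := AB+BA$. First I would note that $\odot$ is a bilinear operation on the ambient space of $n\times n$ matrices: bilinearity of $(A,B)\mapsto AB+BA$ follows immediately from bilinearity of matrix multiplication together with distributivity of matrix addition, so its restriction to the subspace $\mathcal{J}$ is again bilinear. Property~(1), closure of $\mathcal{J}$ under $\odot$, is precisely the hypothesis of the lemma. Property~(2), commutativity, is trivial since $A\odot B = AB+BA = BA+AB = B\odot A$ for all matrices $A,B$.

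The only property requiring genuine computation is the Jordan identity~(3), namely $(A\odot B)\odot(A\odot A) = A\odot\bigl(B\odot(A\odot A)\bigr)$ for all $A,B\in\mathcal{J}$. Here I would simply expand each side using only associativity of matrix multiplication. Writing $A\odot A = 2A^2$, the left-hand side is $(AB+BA)(2A^2) + (2A^2)(AB+BA)$, which multiplies out to $2\bigl(ABA^2 + BA^3 + A^3B + A^2BA\bigr)$; the right-hand side is $A\bigl(2BA^2+2A^2B\bigr) + \bigl(2BA^2+2A^2B\bigr)A$, which multiplies out to $2\bigl(ABA^2 + A^3B + BA^3 + A^2BA\bigr)$. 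These two expressions agree term by term, so the identity holds. (The nonstandard scalar normalisation — using $AB+BA$ rather than $\tfrac12(AB+BA)$ — is harmless, since the Jordan identity is homogeneous in $\odot$ and hence preserved under rescaling the product.)

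There is essentially no obstacle here beyond the bookkeeping of this expansion; the point worth emphasising is structural rather than computational. What makes the Jordan identity hold automatically is that all powers of a single matrix $A$ commute with one another, so that $A$ and $A\odot A$ interact symmetrically in both orders of the outer product — this is exactly the phenomenon that turns every associative algebra into a (special) Jordan algebra under the symmetrised product, and the lemma is the restriction of that fact to matrix subspaces that happen to be closed under $\odot$.
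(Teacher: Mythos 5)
Your proposal is correct and follows exactly the route the paper intends: the paper's proof is simply the remark that one checks the conditions of Definition~\ref{def:jordanAlgebra} directly, and you have carried out that check (bilinearity and commutativity being immediate, and the Jordan identity verified by expanding both sides of $(A\odot B)\odot(A\odot A)=A\odot\bigl(B\odot(A\odot A)\bigr)$ using associativity), with the expansion done correctly.
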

\begin{proof}
The proof is a straightforward check against the conditions in Definition~\ref{def:jordanAlgebra}.
\end{proof}

Analogously, we recall that a (matrix) \emph{Lie algebra} of $n\times n$ matrices is similarly obtained by using the corresponding antisymmetric bilinear product:
\[
[A,B]:=AB-BA,
\]
and the following lemma provides a link between Lie and Jordan algebras of $n \times n$ matrices under these definitions.

\begin{lemma}
\label{lem:matrixIffJordanAndLie}
A set of matrices forms a matrix algebra if and only if it is both a Jordan algebra and a Lie algebra.
\end{lemma}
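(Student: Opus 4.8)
The plan is to prove the two directions separately, using the standard polarization identities that relate the associative product $AB$ to the Jordan product $A\odot B = AB+BA$ and the Lie bracket $[A,B]=AB-BA$. The forward direction is immediate: if $\mathcal{A}$ is a matrix algebra (a subspace closed under $AB$), then for any $A,B\in\mathcal{A}$ both $AB$ and $BA$ lie in $\mathcal{A}$, so $A\odot B = AB+BA\in\mathcal{A}$ and $[A,B]=AB-BA\in\mathcal{A}$; closure of the bilinear products together with Lemma~\ref{lem:jordanProduct} (and the analogous statement for Lie algebras) shows $\mathcal{A}$ is simultaneously a Jordan and a Lie algebra.

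For the converse, suppose a subspace $S$ of $n\times n$ matrices is closed under both $A\odot B = AB+BA$ and $[A,B]=AB-BA$. Then for any $A,B\in S$ the sum $(A\odot B) + [A,B] = (AB+BA)+(AB-BA) = 2AB$ lies in $S$, and since $S$ is a real vector space we may divide by $2$ to conclude $AB\in S$. Hence $S$ is closed under the associative matrix product, i.e. it is a matrix algebra. This is the whole argument; no deeper structure theory is needed.

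The one point deserving a word of care is the role of the scalar field: the identity $AB = \tfrac12\big((A\odot B)+[A,B]\big)$ requires that $2$ be invertible, which is fine here since all vector spaces in the paper are over $\RR$ (indeed the paper's Jordan and Lie algebras are real subspaces of $\mat_n(\RR)$). I would state this explicitly so the reader sees why characteristic $2$ is excluded. Beyond that, the proof is a two-line polarization computation in each direction, so there is no real obstacle — the only thing to get right is to invoke Lemma~\ref{lem:jordanProduct} (and note the Lie-algebra analogue defined just before the statement) rather than re-verifying the Jordan axioms by hand.
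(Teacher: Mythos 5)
Your proposal is correct and is essentially the paper's own argument: both rest on the polarization identity $AB=\tfrac12\bigl((A\odot B)+[A,B]\bigr)$ for the nontrivial direction, with the forward direction being immediate from closure under $AB$ and $BA$. The extra remark about invertibility of $2$ is a fine clarification but does not change the argument.
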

\begin{proof}

Suppose $\mathcal{A}$ forms both a Jordan algebra and a Lie algebra. Then, for all $A,B \in \mathcal{A}$:
\[
AB = \frac12 (AB +BA)+\frac12 (AB-BA) = \frac12 A \odot B + \frac12 [A,B] \in \mathcal{A},
\]
and hence $\mathcal{A}$ forms a matrix algebra. 

The other direction of the proof is similarly elementary.
\end{proof}

As a consequence of Fact~\ref{fact:rateSubAlgebra} and Lemma \ref{lem:matrixIffJordanAndLie} we have:
\begin{fact}
The zero row sum matrices $\zerocolsum_n$ form both a Lie algebra and a Jordan algebra.
\end{fact}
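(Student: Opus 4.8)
The plan is to deduce this immediately from the two results already established. By parts (1.) and (2.) of Fact~\ref{fact:rateSubAlgebra}, the set $\zerocolsum_n$ is closed under real linear combinations and under standard matrix multiplication; since it is evidently a linear subspace of $\mat_n(\RR)$, this says precisely that $\zerocolsum_n$ is a matrix algebra in the sense defined just before Lemma~\ref{lem:jordanProduct}. Lemma~\ref{lem:matrixIffJordanAndLie} then applies verbatim: a matrix algebra is in particular both a Jordan algebra (under $A\odot B = AB+BA$) and a Lie algebra (under $[A,B]=AB-BA$), which is the claim.

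If one prefers to avoid invoking the ``if and only if'' and argue directly, the alternative is to check closure under the two bilinear products by hand: for $Q_1,Q_2\in\zerocolsum_n$, parts (1.) and (2.) of Fact~\ref{fact:rateSubAlgebra} give $Q_1Q_2\in\zerocolsum_n$ and $Q_2Q_1\in\zerocolsum_n$, whence $Q_1\odot Q_2 = Q_1Q_2+Q_2Q_1\in\zerocolsum_n$ and $[Q_1,Q_2]=Q_1Q_2-Q_2Q_1\in\zerocolsum_n$. The remaining Jordan axioms (commutativity and the Jordan identity) and the Lie axioms (antisymmetry and the Jacobi identity) hold automatically for these products on any associative matrix algebra, exactly as in Lemma~\ref{lem:jordanProduct} and its Lie-algebraic analogue.

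I do not anticipate any real obstacle here; the statement is a corollary in the strict sense, and the only thing to be careful about is making explicit that ``matrix subalgebra of $\mat_n(\RR)$'' in Fact~\ref{fact:rateSubAlgebra} matches the notion of ``matrix algebra'' used in Lemma~\ref{lem:matrixIffJordanAndLie} (closed subspace under matrix multiplication), so that the hypothesis of that lemma is genuinely met. The non-semisimplicity and the one-sided identity structure recorded in the rest of Fact~\ref{fact:rateSubAlgebra} play no role in this particular deduction.
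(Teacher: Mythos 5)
Your proposal is correct and matches the paper's own justification exactly: the paper states this fact as an immediate consequence of Fact~\ref{fact:rateSubAlgebra} (that $\zerocolsum_n$ is a matrix subalgebra) together with Lemma~\ref{lem:matrixIffJordanAndLie}. Your careful note that the notion of ``matrix subalgebra'' in Fact~\ref{fact:rateSubAlgebra} agrees with the ``matrix algebra'' hypothesis of Lemma~\ref{lem:matrixIffJordanAndLie} is exactly the right check, and nothing further is needed.
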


The Lie algebra structure of $\zerocolsum_n$ is described in detail in \cite{sumnerLie}.
In order to give an analogous convenient description of the Jordan algebra structure, we consider the `elementary' rate matrices:
\begin{definition}
\label{def:elementaryRateMatrix}
An \emph{elementary rate matrix} $L_{ij}\in \zerocolsum_n$ has $1$ in the $ij^{th}$ position, a $-1$ in the $ii^{th}$ position, and zeroes elsewhere. 
For convenience, we set $L_{ii}=0$ for each $i\in [n]$.  
%\demo
\end{definition}

The set $\mathcal{B}=\{L_{ij} \in \mat_n(\mathbb{C}):1 \leq i\neq j \leq n \}$ forms a basis for $\zerocolsum_{n}$ as a vector space and, referring to Fact~\ref{fact:ratematricesconical}, $\zerocolsum_{n}^+$ is equal to the set of all conical combinations of the elementary rate matrices.
Of course, each $L_{ii}$ is equal to the zero matrix and not included in this statement. 
However, they are formally useful for describing the Jordan algebra structure of $\zerocolsum_{n}$ which is to follow.

We note that, as the Jordan product is bilinear, all Jordan products of matrices in $\zerocolsum_{n}$ can be reduced down to Jordan products of the elementary rate matrices.
Then, without too much trouble, one confirms:
\begin{lemma}
\label{lem:elementaryJordan}
For all choices $i,j\in [n]$:
\[
L_{ij}\odot L_{kl} = -\delta_{jl}(L_{ij}+L_{kl})+\delta_{jk}(L_{il}-L_{kl})+\delta_{il}(L_{kj}-L_{ij}),
\]
where $\delta_{ii}=1$ and $\delta_{ij}=0$ whenever $i\neq j$.
\end{lemma}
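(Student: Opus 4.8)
The plan is to verify the stated formula by a direct computation of the matrix product $L_{ij}L_{kl}$ and its transpose-like counterpart $L_{kl}L_{ij}$, then add them and sort the result according to which Kronecker deltas are nonzero. Writing $L_{ij}=E_{ij}-E_{ii}$ in terms of the standard matrix units $E_{ab}$ (which satisfy $E_{ab}E_{cd}=\delta_{bc}E_{ad}$), I would expand
\[
L_{ij}L_{kl}=(E_{ij}-E_{ii})(E_{kl}-E_{kk})=\delta_{jk}E_{il}-\delta_{jk}E_{ik}-\delta_{ik}E_{il}+\delta_{ik}E_{ik},
\]
and similarly for $L_{kl}L_{ij}$ by swapping the roles of the index pairs. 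Adding these and regrouping the $E_{ab}$ terms back into elementary rate matrices $L_{ab}=E_{ab}-E_{aa}$ should produce exactly the right-hand side.

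The key bookkeeping step is to recognise that each product of two matrix units $E_{ab}$ appearing above must be re-expressed in the $L$-basis, and that the $E_{aa}$ pieces either cancel or combine to give the diagonal corrections $-\delta_{jl}(L_{ij}+L_{kl})$ etc.; here the convention $L_{ii}=0$ from Definition~\ref{def:elementaryRateMatrix} is what makes the formula uniform across all index choices (including degenerate ones where some of $i,j,k,l$ coincide). I would handle this by first deriving the identity under the generic assumption that $i,j,k,l$ are pairwise distinct except where a $\delta$ forces equality, and then separately checking the handful of coincidence cases (e.g. $i=k$, $j=l$; $i=l$ but $j\neq k$; and so on) to confirm the single closed formula absorbs all of them — this case analysis, though routine, is the part most prone to sign errors and is where I expect to spend the most care.

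Alternatively, and perhaps more cleanly, I would appeal to Lemma~\ref{lem:jordanProduct} and the fact that $\zerocolsum_n$ is closed under matrix multiplication (Fact~\ref{fact:rateSubAlgebra}), so that $L_{ij}\odot L_{kl}$ is guaranteed a priori to lie in $\zerocolsum_n$ and hence to be some linear combination of the $L_{ab}$; then I only need to pin down the coefficients, which can be done by evaluating both sides on the standard basis vector $e_j$ (or a couple of well-chosen basis vectors) and matching. The main obstacle either way is purely organisational: making sure the degenerate index cases are all covered and the signs of the $\delta_{jk}(L_{il}-L_{kl})$ and $\delta_{il}(L_{kj}-L_{ij})$ terms come out correctly, since the non-diagonal contributions from $E_{ij}E_{kl}$-type products and the diagonal contributions from $E_{ii}E_{kl}$-type products must be disentangled and reassembled carefully. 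Given the authors' remark that one confirms the formula ``without too much trouble,'' I would keep the written proof terse — stating the expansion $L_{ij}=E_{ij}-E_{ii}$, doing the four-term multiplication, and remarking that regrouping yields the claim, with the coincidence cases left to the reader.
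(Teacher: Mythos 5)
Your method --- writing $L_{ij}=E_{ij}-E_{ii}$ in matrix units, computing both products and regrouping --- is the natural one (the paper itself offers no proof beyond ``one confirms''), and your displayed expansion of $L_{ij}L_{kl}$ is correct. The gap lies in the step you assert rather than execute: adding $L_{kl}L_{ij}=\delta_{il}E_{kj}-\delta_{il}E_{ki}-\delta_{ik}E_{kj}+\delta_{ik}E_{ki}$ and regrouping does \emph{not} produce the right-hand side printed in the Lemma. Under Definition~\ref{def:elementaryRateMatrix} (the $-1$ sits in the $ii$-th position) the computation gives
\[
L_{ij}\odot L_{kl}=-\delta_{ik}\left(L_{ij}+L_{kl}\right)+\delta_{jk}\left(L_{il}-L_{ik}\right)+\delta_{il}\left(L_{kj}-L_{ki}\right),
\]
in which the ``annihilation'' term is governed by $\delta_{ik}$ rather than $\delta_{jl}$, and the subtracted matrices in the remaining terms differ from the printed ones. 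The coincidence cases you postponed are exactly where the two expressions part company: for $(i,j,k,l)=(1,2,1,3)$ one finds $L_{12}\odot L_{13}=-(L_{12}+L_{13})$ while the printed formula gives $0$; for $(1,2,3,2)$ the Jordan product is $0$ while the printed formula gives $-(L_{12}+L_{32})$; for $(1,2,2,3)$ the product is $L_{13}-L_{12}$, not $L_{13}-L_{23}$. (The printed identity is what one obtains under the transposed convention $L_{ij}=E_{ij}-E_{jj}$, i.e.\ zero \emph{column} sums, so the statement as printed is inconsistent with the paper's own definition of $L_{ij}$.)

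So the concrete failure is the sentence ``adding these and regrouping \ldots should produce exactly the right-hand side,'' together with the decision to leave the degenerate-index checks to the reader: the very first such check refutes the claimed identity, so a proof written along your outline would assert something false. To repair it, carry the regrouping through and state the corrected right-hand side displayed above (its symmetry under $(i,j)\leftrightarrow(k,l)$ is a useful sanity check), or note explicitly that the formula of the Lemma holds verbatim only if $L_{ij}$ is redefined with its $-1$ in the $jj$-th position. Your alternative route --- using Lemma~\ref{lem:jordanProduct} and Fact~\ref{fact:rateSubAlgebra} to know a priori that $L_{ij}\odot L_{kl}\in\zerocolsum_n$ and then matching coefficients on basis vectors --- is sound in principle, but it meets the same obstruction at the matching stage and would likewise lead you to the corrected coefficients rather than the printed ones.
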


% We note here that the definition of $L_{ii}=0_{n \times n}$ is necessary as, if $j=k$ and $i=l$, then $L_{il}=L_{ii}=0_{n \times n}$ and $L_{kj}=L_{jj}=0_{n \times n}$
% \\ \\
We are now ready to define a Jordan/Lie-Markov model, noting that, due to Lemma \ref{lem:matrixIffJordanAndLie}, it is possible for a Markov model to simultaneously be a Lie- and a Jordan-Markov model. 

\begin{definition}
\label{def:jordanMarkovModel}
Given a subset $\zerocolsum \subseteq \zerocolsum_{n}$ such that $\zerocolsum$ forms a Jordan/Lie algebra, we refer to $\zerocolsum^+ = \zerocolsum \ \cap \ \zerocolsum_{n}^+$ as a \emph{Jordan/Lie-Markov Model}.

\end{definition}

The following series of results provide our main motivation for considering the concept of Jordan-Markov models.
In particular, we see that a Markov model satisfying the Jordan property produces Markov matrices that are of the same form as the input rate matrices.

\begin{lemma}
\label{lem:jordanIffClosedUnderPowers}
A subspace $\mathcal{J} \subseteq \mat_n(\RR)$ forms a Jordan algebra if and only if it is closed under positive integer powers. 
That is,  
\[
A \odot B = AB+BA \in \mathcal{J}, \ \forall A, B \in \mathcal{J}\iff C^k \in \mathcal{J},\ \forall k \in \mathbb{N}, \ C \in \mathcal{J}.
\]
\end{lemma}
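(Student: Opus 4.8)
The statement is an "if and only if" between closure under the Jordan product $A\odot B = AB+BA$ and closure under positive integer powers. The plan is to prove each direction separately, working entirely with the linear-space structure and exploiting bilinearity/symmetry of $\odot$.

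For the forward direction ($\odot$-closed $\implies$ power-closed), I would argue by induction on $k$. The base cases $k=1$ (trivial) and $k=2$ are immediate: $C^2 = \tfrac12(C\odot C)\in\mathcal{J}$. For the inductive step, the natural idea is to produce $C^{k}$ from lower powers using a Jordan product. The obvious candidate $C\odot C^{k-1} = C\,C^{k-1} + C^{k-1}C = 2C^{k}$ works, but only if we already know $C^{k-1}\in\mathcal{J}$; so a single-step induction with hypothesis "$C^{m}\in\mathcal{J}$ for all $m<k$" suffices, and in fact even "$C^{k-1}\in\mathcal{J}$" is enough. Thus $C^{k} = \tfrac12\bigl(C\odot C^{k-1}\bigr)\in\mathcal{J}$, closing the induction.

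For the reverse direction (power-closed $\implies$ $\odot$-closed), the key trick is polarization: for $A,B\in\mathcal{J}$, we have $A+B\in\mathcal{J}$ (since $\mathcal{J}$ is a subspace), so $(A+B)^2\in\mathcal{J}$ by hypothesis, and expanding gives $(A+B)^2 = A^2 + AB + BA + B^2 = A^2 + B^2 + A\odot B$. Since $A^2, B^2\in\mathcal{J}$ and $\mathcal{J}$ is closed under subtraction, we conclude $A\odot B = (A+B)^2 - A^2 - B^2 \in \mathcal{J}$. Together with the easily-checked axioms (commutativity of $\odot$ and the Jordan identity, both of which hold automatically for this product on any associative matrix algebra, as in Lemma~\ref{lem:jordanProduct}), this shows $\mathcal{J}$ is a Jordan algebra.

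There is no serious obstacle here; the only thing to be careful about is that both directions genuinely use that $\mathcal{J}$ is a \emph{subspace} (closed under addition and scalar multiplication), not merely a subset — the forward direction needs scalar multiples to extract $C^k$ from $\tfrac12(C\odot C^{k-1})$, and the reverse direction needs closure under sums and differences for the polarization identity. I would state this dependence explicitly rather than let it pass silently.
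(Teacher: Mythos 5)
Your proposal is correct and follows essentially the same route as the paper: induction via $C^k=\tfrac12\,(C^{k-1}\odot C)$ for one direction, and the polarization identity $A\odot B=(A+B)^2-A^2-B^2$ for the other. No gaps.
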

\begin{proof}
Suppose $\mathcal{J}$ is a Jordan algebra. Then $A^2 = \frac{1}{2} A\odot A \in \mathcal{J}$.
Then, supposing $A^{k-1}\in \mathcal{J}$ for integer $k>1$, we have
\[
\frac12 A^{k-1} \odot A =\frac12( A^{k-1}A+AA^{k-1} )=\frac12( A^k+A^k )=A^k \in \mathcal{J}.
\]
Hence, inductively, $\mathcal{J}$ is closed under positive integer powers.

On the other hand, suppose $\mathcal{J}$ is a matrix vector space and closed under powers.
Then for all $A, B \in \mathcal{J}$, we have $A+B,(A+B)^2 \in \mathcal{J}$.
Hence
$$(A+B)^2-A^2-B^2 = AB+BA=A\odot B \in \mathcal{J}, $$
and $\mathcal{J}$ is a Jordan algebra, as required.
\end{proof}

\begin{definition}
\label{def:minimalSubspace}
Suppose $\zerocolsum^+=\zerocolsum\cap \zerocolsum^+_n$ is a linear Markov model, so $\zerocolsum\subseteq \zerocolsum_n$ is a linear subspace. 
We say $\zerocolsum$ is \emph{minimal} if, for any other subspace $\zerocolsum'\subseteq\zerocolsum_{n}$ satisfying $\zerocolsum^+=\zerocolsum' \cap \zerocolsum_{n}^+$, it follows that $\zerocolsum \subseteq \zerocolsum'$.
\end{definition}
We note that a Jordan Algebra $\zerocolsum$ satisfying $\zerocolsum=\spam_{\RR}\left(\zerocolsum^+\right)$ is equivalent to $\zerocolsum$ having a `stochastic basis', as introduced in \cite{sumnerLie}.
\begin{lemma}
\label{lem:ifSubspaceMinimalThenEqualToSpan}
Suppose $\zerocolsum^+=\zerocolsum\cap \zerocolsum^+_n$ is a linear Markov model with $\zerocolsum$ minimal.
Then $\zerocolsum=\spam_{\RR}(\zerocolsum^+)$. 
In particular, the minimal subspace defining a linear Markov model is uniquely defined.
\end{lemma}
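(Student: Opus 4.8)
The plan is to prove the two inclusions $\spam_{\RR}(\zerocolsum^+)\subseteq\zerocolsum$ and $\zerocolsum\subseteq\spam_{\RR}(\zerocolsum^+)$ separately. The first is immediate: since $\zerocolsum^+=\zerocolsum\cap\zerocolsum_{n}^+\subseteq\zerocolsum$ and $\zerocolsum$ is a linear subspace of $\zerocolsum_n$, every real linear combination of elements of $\zerocolsum^+$ already lies in $\zerocolsum$, so $\spam_{\RR}(\zerocolsum^+)\subseteq\zerocolsum$.

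For the reverse inclusion I would exploit the minimality hypothesis. Write $W:=\spam_{\RR}(\zerocolsum^+)$; the key step is to check that $W$ is a subspace satisfying $\zerocolsum^+=W\cap\zerocolsum_{n}^+$, i.e. that $W$ is a legitimate competitor in Definition~\ref{def:minimalSubspace}. The inclusion $\zerocolsum^+\subseteq W\cap\zerocolsum_{n}^+$ holds because $\zerocolsum^+$ spans $W$ and $\zerocolsum^+\subseteq\zerocolsum_{n}^+$ by definition; the opposite inclusion $W\cap\zerocolsum_{n}^+\subseteq\zerocolsum\cap\zerocolsum_{n}^+=\zerocolsum^+$ follows at once from $W\subseteq\zerocolsum$, established in the previous paragraph. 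Having verified $\zerocolsum^+=W\cap\zerocolsum_{n}^+$, minimality of $\zerocolsum$ forces $\zerocolsum\subseteq W$, and together with $W\subseteq\zerocolsum$ we obtain $\zerocolsum=W=\spam_{\RR}(\zerocolsum^+)$.

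For the uniqueness assertion, I would observe that the argument above in fact shows that $W=\spam_{\RR}(\zerocolsum^+)$ is contained in \emph{every} subspace $\zerocolsum'$ with $\zerocolsum^+=\zerocolsum'\cap\zerocolsum_{n}^+$ (since then $\zerocolsum^+\subseteq\zerocolsum'$ and $\zerocolsum'$ is a subspace), while $W$ is itself such a subspace; hence $W$ is the unique minimal subspace defining $\zerocolsum^+$, and any two minimal subspaces defining the same linear Markov model must each equal $W$. I do not anticipate a genuine obstacle here: the statement is essentially a formal consequence of the definition of minimality, the only point needing a moment's attention being the verification that $W\cap\zerocolsum_{n}^+=\zerocolsum^+$, which is precisely where the containment $W\subseteq\zerocolsum$ is used.
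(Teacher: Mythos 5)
Your proof is correct and takes essentially the same route as the paper: establish $\spam_{\RR}(\zerocolsum^+)\subseteq\zerocolsum$, verify that $\spam_{\RR}(\zerocolsum^+)\cap\zerocolsum_{n}^+=\zerocolsum^+$ so that $\spam_{\RR}(\zerocolsum^+)$ is a legitimate competitor in Definition~\ref{def:minimalSubspace}, and then invoke minimality to get the reverse inclusion. Your explicit argument for the uniqueness assertion merely spells out a step the paper treats as immediate, so there is nothing to change.
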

\begin{proof}
We have $\zerocolsum^+\subseteq  \spam_{\RR}(\zerocolsum^+)\subseteq \zerocolsum$ and $\zerocolsum^+\subseteq \zerocolsum_n^+$.
Thus,
\[
\zerocolsum \cap \zerocolsum_{n}^+=\zerocolsum^+\subseteq \spam_{\RR}(\zerocolsum^+) \cap \zerocolsum_{n}^+ \subseteq \zerocolsum \cap \zerocolsum_{n}^+
\]

Thus $$\spam_{\RR}(\zerocolsum^+) \cap \zerocolsum_{n}^+ = \zerocolsum \cap \zerocolsum_{n}^+$$ and the minimality of $\zerocolsum$ yields the result.
\end{proof}
\begin{example}
\label{exam:nonMinimalModel}
To illustrate a linear Markov model defined using a non-minimal subspace, consider the subspace,
\[
\zerocolsum:=
\left\{
\begin{pmatrixT}
0 & \alpha & \alpha \\
\beta & -2\alpha & \alpha \\
-\beta & \alpha &  -2\alpha
\end{pmatrixT}
:
\alpha,\beta\in \RR.
\right\}\subset \zerocolsum_3,
\]
and note that
\[
\zerocolsum^+:=
\zerocolsum\cap \zerocolsum_3^+
=
\left\{
\begin{pmatrixT}
0 & \alpha & \alpha \\
0 & -2\alpha & \alpha \\
0 & \alpha &  -2\alpha
\end{pmatrixT}
:
\alpha\geq 0
\right\}=
\spam_\RR(\zerocolsum^+)\cap \zerocolsum_3^+
, 
\]
with
\[
\spam_\RR(\zerocolsum^+)=
\left\{
\begin{pmatrixT}
0 & \alpha & \alpha \\
0 & -2\alpha & \alpha \\
0 & \alpha &  -2\alpha
\end{pmatrixT}
:
\alpha\in \RR
\right\}\neq \zerocolsum.
\]
\end{example}
The particular importance of Lemma~\ref{lem:ifSubspaceMinimalThenEqualToSpan} is demonstrated by the following: 

\begin{fact}
Suppose $\zerocolsum^+$ is a linear Markov model.
Then $\zerocolsum^+=\spam_\RR(\zerocolsum^+) \cap \zerocolsum_{n}^+$.
As a consequence, any rate matrix $Q\in \zerocolsum_n^+$ that is expressible as a linear combination of rate matrices taken from $\zerocolsum^+$ again lies in $\zerocolsum^+$.
That is, a linear Markov model is closed under linear combinations of rate matrices that yield rate matrices.
\end{fact}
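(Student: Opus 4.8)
The plan is to reduce everything to the same ``sandwich'' argument already used in the proof of Lemma~\ref{lem:ifSubspaceMinimalThenEqualToSpan}, but without requiring the defining subspace to be minimal. By Definition~\ref{def:markovModel}, since $\zerocolsum^+$ is linear there is \emph{some} linear subspace $\zerocolsum\subseteq\zerocolsum_n$ with $\zerocolsum^+=\zerocolsum\cap\zerocolsum_n^+$; I would fix such a $\zerocolsum$ at the outset and assume nothing further about it (in particular, not that it is minimal).

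First I would record the inclusions $\zerocolsum^+\subseteq\spam_{\RR}(\zerocolsum^+)\subseteq\zerocolsum$ — the first because a set lies in its own span, the second because $\zerocolsum$ is a subspace containing $\zerocolsum^+$ — together with $\zerocolsum^+\subseteq\zerocolsum_n^+$. Intersecting the chain with $\zerocolsum_n^+$ then gives
\[
\zerocolsum^+=\zerocolsum^+\cap\zerocolsum_n^+\subseteq\spam_{\RR}(\zerocolsum^+)\cap\zerocolsum_n^+\subseteq\zerocolsum\cap\zerocolsum_n^+=\zerocolsum^+,
\]
where the first equality uses $\zerocolsum^+\subseteq\zerocolsum_n^+$ and the last uses the defining property of $\zerocolsum$. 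Hence every inclusion is an equality and $\zerocolsum^+=\spam_{\RR}(\zerocolsum^+)\cap\zerocolsum_n^+$, which is the first assertion.

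For the stated consequence I would take any $Q\in\zerocolsum_n^+$ that can be written as $Q=\sum_i\lambda_iQ_i$ with $\lambda_i\in\RR$ and $Q_i\in\zerocolsum^+$. Then $Q\in\spam_{\RR}(\zerocolsum^+)$ by construction and $Q\in\zerocolsum_n^+$ by hypothesis, so $Q\in\spam_{\RR}(\zerocolsum^+)\cap\zerocolsum_n^+=\zerocolsum^+$ by the first part — which is exactly the closure statement.

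I do not expect a genuine obstacle here: the whole content is the observation that the double inclusion closes up regardless of which defining subspace $\zerocolsum$ was chosen, so Lemma~\ref{lem:ifSubspaceMinimalThenEqualToSpan} is not strictly needed (it only tells us, as a by-product, that $\spam_{\RR}(\zerocolsum^+)$ is itself the minimal such subspace). The one point to phrase carefully is that ``linear Markov model'' supplies only the \emph{existence} of a defining subspace, not a canonical one, so the argument must be written so as to be insensitive to that choice — which the sandwich estimate is.
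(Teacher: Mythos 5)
Your proposal is correct and matches the paper's intended argument: the double-inclusion ``sandwich'' from the proof of Lemma~\ref{lem:ifSubspaceMinimalThenEqualToSpan} already yields $\spam_{\RR}(\zerocolsum^+)\cap\zerocolsum_n^+=\zerocolsum\cap\zerocolsum_n^+=\zerocolsum^+$ for any defining subspace $\zerocolsum$, with minimality needed only for the subspace equality, not for this Fact. Your observation that the argument is insensitive to the choice of $\zerocolsum$, together with the immediate deduction of the closure statement, is exactly what the paper relies on.
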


The following example illustrates a Markov model where, despite the appearance of a linear parameterisation, the implicit existence of inequalities on the matrix entries means the model is not linear.
\begin{example}
\label{exam:nonLinearModel}
Consider
\[
\zerocolsum^+:=
\left\{
\begin{pmatrixT}
-\alpha & \alpha+\beta \\
\alpha & -(\alpha+\beta)
\end{pmatrixT}
:
\alpha,\beta\geq 0.
\right\},
\]
and note that
\[
\begin{pmatrixT}
-1 & 0 \\
1 & 0
\end{pmatrixT}
\notin 
\zerocolsum^+.
\]
Observe that $\spam_\RR(\zerocolsum^+)=\zerocolsum_2$ and hence $\spam_\RR(\zerocolsum^+)\cap \zerocolsum_2^+=\zerocolsum_2^+ \neq \zerocolsum^+$, we see that $\zerocolsum^+$ is not a linear Markov model.
\end{example}

The following definition is motivated by the observation presented in Lemma~\ref{lem:positiveRateExponentialSubset} and captures the particular property of Markov models that is the central point of study for the present article. 

\begin{definition}
\label{def:expclosed}
We say a Markov model $\zerocolsum^+$ is \emph{uniformization stable} if, for all $t\geq 0$ and $Q \in \zerocolsum^+$, there exists $\hat{Q} \in \zerocolsum^+$ such that $e^{Qt}-I_n = \hat{Q}$.
\end{definition}

The motivation for our terminology comes from the standard `uniformization' procedure, first discussed in \cite{grassman}: Suppose $Q\in \zerocolsum^+_n$ is a rate matrix and choose $\lambda>0$ such that $R=I_n+
\frac{1}{\lambda}Q$ is a Markov matrix.
Then,
\[
M=e^{Qt}=e^{-\lambda t}e^{\lambda Rt}=\sum_{k\geq 0}e^{-\lambda t}\frac{(\lambda t)^k}{k!}R^k,
\]
which decomposes the Markov chain into a Poisson process, with mean $\lambda$, and a discrete time Markov chain with transition matrix $R$.
An easy check then shows that Definition~\ref{def:expclosed} is equivalent to the requirement that the Markov matrices $M$ and $R$ are of the same form for all choices of $Q$ in the model. That is, if $\zerocolsum^+$ is uniformization stable, for any $Q\in \zerocolsum^+$, there are $Q_M, Q_R \in \zerocolsum^+$ such that $M=I_n+Q_M$ and $R=I_n+Q_R$.

\begin{theorem}
\label{thm:jordanIffExponential}
Suppose $\zerocolsum^+$ is a linear Markov model. 
Noting that $\zerocolsum^+=\spam_\RR(\zerocolsum^+)\cap \zerocolsum^+_n$, the following statements are equivalent:

\begin{enumerate}
    \item $\zerocolsum^+$ is uniformization stable;
    \item $\spam_\RR(\zerocolsum^+)$ forms a Jordan algebra.
\end{enumerate}

\end{theorem}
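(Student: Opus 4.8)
The plan is to prove both implications using three ingredients: that $\zerocolsum:=\spam_\RR(\zerocolsum^+)$ is a finite-dimensional, hence topologically closed, linear subspace of $\mat_n(\RR)$; Lemma~\ref{lem:jordanIffClosedUnderPowers} (a subspace is a Jordan algebra iff it is closed under positive integer powers); and Lemma~\ref{lem:positiveRateExponentialSubset} ($e^Q\in\unitcolsum_n^+$ whenever $Q\in\zerocolsum_n^+$). Throughout we use that $\zerocolsum^+=\zerocolsum\cap\zerocolsum_n^+$, as noted in the theorem statement.

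For $(2)\Rightarrow(1)$: assume $\zerocolsum$ is a Jordan algebra, fix $Q\in\zerocolsum^+$ and $t\geq 0$. By Lemma~\ref{lem:jordanIffClosedUnderPowers}, $\zerocolsum$ is closed under positive integer powers, so every partial sum $\sum_{k=1}^{N}\tfrac{(Qt)^k}{k!}$ lies in $\zerocolsum$; letting $N\to\infty$ and using that $\zerocolsum$ is closed gives $e^{Qt}-I_n\in\zerocolsum$. On the other hand, Lemma~\ref{lem:positiveRateExponentialSubset} gives $e^{Qt}\in\unitcolsum_n^+$, so $e^{Qt}-I_n$ has zero row sums and non-negative off-diagonal entries, i.e.\ $e^{Qt}-I_n\in\zerocolsum_n^+$. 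Hence $e^{Qt}-I_n\in\zerocolsum\cap\zerocolsum_n^+=\zerocolsum^+$, and taking $\hat Q=e^{Qt}-I_n$ verifies Definition~\ref{def:expclosed}.

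For $(1)\Rightarrow(2)$: assume $\zerocolsum^+$ is uniformization stable. I will show $\zerocolsum$ is closed under squaring; then the polarization identity $A\odot B=(A+B)^2-A^2-B^2$ together with bilinearity of $\odot$ shows $\zerocolsum$ is a Jordan algebra. First, for fixed $Q\in\zerocolsum^+$, uniformization stability gives $e^{Qt}-I_n\in\zerocolsum^+\subseteq\zerocolsum$ for all $t\geq 0$; since $\zerocolsum$ is a subspace, $\tfrac{1}{t^2}(e^{Qt}-I_n-tQ)=\tfrac{Q^2}{2}+\tfrac{tQ^3}{6}+\cdots\in\zerocolsum$ for every $t>0$, and letting $t\to 0^+$ with $\zerocolsum$ closed yields $Q^2\in\zerocolsum$. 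Next, recall $\zerocolsum^+$ is closed under conical combinations of rate matrices (it is the intersection of the subspace $\zerocolsum$ with the convex cone $\zerocolsum_n^+$ of Fact~\ref{fact:ratematricesconical}); in particular $Q_1+Q_2\in\zerocolsum^+$ for all $Q_1,Q_2\in\zerocolsum^+$, so $(Q_1+Q_2)^2\in\zerocolsum$ and hence $Q_1\odot Q_2=(Q_1+Q_2)^2-Q_1^2-Q_2^2\in\zerocolsum$. Finally, since $\zerocolsum=\spam_\RR(\zerocolsum^+)$, writing arbitrary $A,B\in\zerocolsum$ as real linear combinations of elements of $\zerocolsum^+$ and using bilinearity gives $A\odot B\in\zerocolsum$, so $\zerocolsum$ is a Jordan algebra.

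The main obstacle is the $(1)\Rightarrow(2)$ direction: extracting the quadratic term $Q^2$ from the uniformization-stability hypothesis genuinely requires $\zerocolsum$ to be closed (finite-dimensional), not merely a subspace, and one then has to leverage the cone structure of $\zerocolsum^+$ to pass from "$Q^2\in\zerocolsum$ for individual rate matrices" to the full Jordan product on the span. The remaining steps — the series manipulations and the verification that $e^{Qt}-I_n$ has non-negative off-diagonal entries — are routine.
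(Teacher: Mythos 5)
Your proposal is correct and follows essentially the same route as the paper: the forward direction extracts $Q^2$ from $e^{Qt}-I_n$ using topological closedness of the span (the paper differentiates $\hat Q(t)$ at $t=0$, you take a difference-quotient limit, which is the same idea), then uses the polarization identity with the cone property of $\zerocolsum^+$ and bilinearity. The converse is exactly the paper's intended "easy application" of Lemma~\ref{lem:jordanIffClosedUnderPowers}, which you spell out carefully, including the check via Lemma~\ref{lem:positiveRateExponentialSubset} that $e^{Qt}-I_n$ lands in $\zerocolsum_n^+$ and not merely in the span.
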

\begin{proof}
Suppose $\zerocolsum^+$ is a linear Markov model and (1.) holds.
Then, for all $Q \in \zerocolsum^+$ and $ t\geq 0$, we have $Qt \in \zerocolsum^+$ also and hence $\hat{Q}(t) := e^{Qt}-I_n \in \zerocolsum^+, \ \forall t \geq 0$. 
Observing that $\spam_\RR(\zerocolsum^+)$ is a real linear subspace of $\zerocolsum_n$ and is therefore topologically closed, we have for all $k\in \mathbb{N}$:

$$\left. \frac{d^k}{dt^k}\hat{Q}(t) \right|_{t=0} = \frac{d^k}{dt^k}\left.\left(- I_n+\sum_{i=0}^{\infty}\frac{(Qt)^i}{i!}\right)\right|_{t=0}
%=\frac{d^n}{dt^n} \big(Qt+\frac{(Qt)^2}{2}+\frac{(Qt)^3}{6}+... \big) \big|_{t=0} 
= Q^k \in \spam_\RR(\zerocolsum^+).$$ 

Since $\zerocolsum^+$ is a linear Markov model, for any $Q_1, Q_2 \in \zerocolsum^+$ we have $ Q_1+ Q_2 \in \zerocolsum^+$ (Fact \ref{fact:ratematricesconical}) and combining with the above yields,
\[
Q_1,Q_2\in \zerocolsum^+ \implies Q_1Q_2+Q_2Q_1 =(Q_1+ Q_2)^2- Q_1^2-Q_2^2 \in \spam_\RR(\zerocolsum^+).
\]
One then easily extends this result linearly to show $R,R'\in \spam_\RR(\zerocolsum^+)$ implies $R\odot R'=RR'+R'R\in \spam_\RR(\zerocolsum^+)$ also.
Thus $\spam_\RR(\zerocolsum^+)$ is a Jordan algebra, as required.

The converse is an easy application of Lemma~\ref{lem:jordanIffClosedUnderPowers} and that $\spam_\RR(\zerocolsum^+)$is a topologically closed subspace of $\zerocolsum_n$.

\end{proof}

\begin{example}
\label{exam:jordanUniformization}
As a demonstration of Theorem~\ref{thm:jordanIffExponential}, we consider the following examples.
\begin{enumerate}
    \item Consider the well-known phylogenetic model first discussed in \cite{felsenstein1981}, which falls into the family of `equal-input' models (as described in \cite[Chap 3.8.1]{steel} --- see more below).
To this end, consider the linear space
\[\text{EI}_4 := 
\left\{
\begin{pmatrixT} 
-(\beta+\gamma+\delta) & \alpha & \alpha & \alpha\\
\beta & -(\alpha+\gamma+\delta) & \beta & \beta\\
\gamma & \gamma & -(\alpha+\beta+\delta) & \gamma\\
\delta & \delta & \delta & -(\alpha+\beta+\gamma)
\end{pmatrixT}: \alpha,\beta,\gamma,\delta \in \RR\right\}\]
and define the linear Markov model $\text{EI}_4^+=\text{EI}_4\cap \zerocolsum^+$.

It is straightforward to show that for any $A,B\in \text{EI}_4$, we have the matrix product $AB\in \text{EI}_4$ also, so we conclude from Lemma~\ref{lem:matrixIffJordanAndLie} that $\text{EI}_4$ forms a Jordan algebra and hence $\text{EI}_4^+$ is uniformization stable.

In fact, taking 
\[
Q=
\begin{pmatrixT} -(\beta+\gamma+\delta) & \alpha & \alpha & \alpha \\
\beta & -(\alpha+\gamma+\delta) & \beta & \beta \\
\gamma & \gamma & -(\alpha+\beta+\delta) & \gamma \\
\delta & \delta & \delta & -(\alpha+\beta+\gamma) 
\end{pmatrixT}
\in \text{EI}_4^+,
\]
we have $Q^2=-\lambda Q$ with $\lambda=\alpha+\beta+\gamma+\delta$, so $Q^k=(-1)^{k-1}\lambda^{k-1} Q$ and, for $t\geq 0$, we find
\[
e^{Qt}-I_n=\frac{1-e^{-\lambda t}}{\lambda } Q\in \text{EI}_4^+,
\]
so the uniformization stability of this model is clear.

\item In \cite{draisma} the mathematically useful class of `equivariant' models was defined, as follows.
Given a permutation group $G\leq S_n$ the $G$-equivariant model is defined to consist of all $Q\in \zerocolsum^+$ satisfying $Q=K^T_\sigma QK_\sigma$ for all $\sigma\in G$.
It is a straightforward exercise to show that for each $G\leq S_n$ the corresponding equivariant model forms a matrix algebra and is hence uniformization stable (Lemma \ref{lem:matrixIffJordanAndLie}).
For $n=4$, a notable example of an equivariant model is the so-called Kimura 3 parameter model \cite{Kimura_1981} which occurs with $G=\{e,(12)(34),(13)(24),(14)(23)\}$. We extend the concept of equivariant models to time-reversible models in Section \ref{sec:timeReversible}.
\item Another important class of Markov models forming matrix algebras are the so-called `group-based' models, where the underlying states in the Markov chain are identified with a finite group \cite[Chapter~7.32]{steel}, as well as their natural extension to models where the states are drawn from a finite semigroup \cite{woodhams}.
Since any model drawn from these classes forms a matrix algebra \cite{woodhams}, it follows each of these models is uniformization stable (Lemma~\ref{lem:matrixIffJordanAndLie}).
\end{enumerate}
\end{example}

We provide a non-trivial example of a model that is not uniformization stable at the end of the next section.

\section{Jordan algebra structure of time-reversible models}
\label{sec:timeReversible}

Since the earliest introduction of the maximum likelihood approach to phylogenetic tree inference \cite{felsenstein1981}, the use of time-reversible Markov models has been almost ubiquitous in phylogenetics and forms a basis for the dominant hierarchy of models available in contemporary phylogenetic inference software such as the IQ-TREE package \cite{minh}. 
In this section we show that some (but not all) time-reversible Markov models are uniformization stable and, in doing so, reveal their underlying Jordan algebraic structure. We note that the general time-reversible model forms an algebraic variety in $\text{Mat}_n(\RR)$, as it is defined by cubic polynomial constraints, specifically the Kolmogorov criterion \cite[Chapter 4]{pollett}.
We begin the discussion by drawing attention to the origin of the Jordan algebraic structure from a more general perspective.

To this end, consider a fixed matrix $D\in\mat_n(\RR)$ and recall that the \emph{commutant},
\[
\text{Comm}(D):=\{A\in \mat_n(\RR):DA=AD \},
\]
forms a matrix subalgebra of $ \mat_n(\RR)$.
This construction has various important applications in linear algebra; perhaps most importantly: $D$ is \emph{cyclic}, or \emph{simple} (has distinct eigenvalues) in the diagonalisable case, if and only if $\text{Comm}(D)=\RR[D]:=\spam_{\RR}(I,D,D^2,D^3,\ldots)$. 

In analogy, consider:
\begin{lemma}
\label{lem:commT}
For fixed $D\in \mat_n(\RR)$, the set 
\[
\text{Comm}^T(D):=\left\{A\in \mat_n(\RR):DA=A^TD \right\}
\]
forms a Jordan subalgebra of $\mat_n(\RR)$.
\end{lemma}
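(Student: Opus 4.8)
The plan is to verify directly that $\text{Comm}^T(D)$ is a linear subspace of $\mat_n(\RR)$ that is closed under the Jordan product $A\odot B = AB+BA$, so that Lemma~\ref{lem:jordanProduct} (equivalently, Definition~\ref{def:jordanAlgebra}) applies. First I would check the subspace property: if $DA=A^TD$ and $DB=B^TD$, then for $\alpha,\beta\in\RR$ the linearity of matrix multiplication and of the transpose give $D(\alpha A+\beta B)=\alpha A^TD+\beta B^TD=(\alpha A+\beta B)^TD$, so $\alpha A+\beta B\in\text{Comm}^T(D)$.

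The substantive step is closure under the Jordan product, and here the one point to get right is that $\text{Comm}^T(D)$ is \emph{not} closed under ordinary matrix multiplication: applying the defining relation twice yields
\[
D(AB)=(DA)B=A^T(DB)=A^TB^TD=(BA)^TD,
\]
so multiplication reverses the order under the $D$-twisted transpose rather than fixing $AB$. Symmetrically $D(BA)=(AB)^TD$. Adding these and using $(BA)^T+(AB)^T=(AB+BA)^T$ gives $D(AB+BA)=(AB+BA)^TD$, i.e. $A\odot B\in\text{Comm}^T(D)$. This establishes that $\text{Comm}^T(D)$ is a Jordan subalgebra.

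As an alternative packaging of the same computation, one can instead invoke Lemma~\ref{lem:jordanIffClosedUnderPowers}: an easy induction shows that $A\in\text{Comm}^T(D)$ forces $A^k\in\text{Comm}^T(D)$ for every $k\in\mathbb{N}$, since $DA^k=(DA^{k-1})A=(A^{k-1})^TDA=(A^{k-1})^TA^TD=(A^k)^TD$; combined with the subspace property, closure under powers gives the Jordan algebra structure directly.

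I do not expect any genuine obstacle here — the proof is a short identity chase. The only thing that requires care is tracking the order-reversal $D(AB)=(BA)^TD$ and observing that it is exactly what makes the symmetrised product, but not the raw product, land back in the set. This mirrors, by contrast, the classical fact recalled just above that the ordinary commutant $\text{Comm}(D)$ is a full (associative) matrix algebra, where the relation $DA=AD$ composes with no swap at all.
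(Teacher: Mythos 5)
Your proposal is correct and follows essentially the same route as the paper: linearity of the defining condition $DA=A^TD$, followed by the identity chase $D(AB+BA)=(A^TB^T+B^TA^T)D=(A\odot B)^TD$, which is exactly the paper's one-line computation (your splitting into $D(AB)=(BA)^TD$ and $D(BA)=(AB)^TD$ is the same calculation written in two halves). The alternative via closure under powers is a valid minor repackaging but not a genuinely different argument.
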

\begin{proof}
The condition for membership in $\text{Comm}^T(D)$ is linear due to the linearity of the matrix transpose. 
Now suppose $A,B\in \text{Comm}^T(D)$ and consider
\[
D(A\odot B)=D(AB+BA)=(A^TB^T+B^TA^T)D=(BA+AB)^TD=(A\odot B)^TD.
\]
Therefore $\text{Comm}^T(D)$ forms a Jordan algebra, as required.
\end{proof}

We now use Lemma~\ref{lem:commT} to reveal the underlying Jordan structure in time-reversible Markov models. 
To avoid trivialities, throughout this section we will assume $\pi=(\pi_1,\pi_2,\ldots,\pi_n)$ is a strictly positive distribution vector; so each $\pi_i>0$ and $\sum_{i=1}^n\pi_i=1$. 
A \emph{time-reversible} Markov chain is then obtained by taking a rate matrix $Q=(q_{ij})\in \zerocolsum^+_n$ satisfying the detailed balance equations:
\[
\pi_i q_{ij}=\pi_j q_{ji},
\]
for all $i,j\in [n]$.
In this scenario, one readily shows $\pi Q=0$ so $\pi$ is an equilibrium distribution for $Q$.

Now letting $D(\pi)$ be the $n\times n$ diagonal matrix with diagonal entries $\pi_i$, an easy check shows that the detailed balance conditions are equivalent to the matrix equation,
\[
D(\pi)Q=Q^TD(\pi).
\]

For each fixed distribution vector $\pi$, it is then natural to define, as in \cite{tavare}, the \emph{general time-reversible model} $\text{GTR}_\pi^+:=\text{GTR}_\pi\cap \zerocolsum^+_n$ where
\[
\text{GTR}_\pi:=\left\{Q\in \zerocolsum_n:D(\pi)Q=Q^TD(\pi)\right\}\subset \zerocolsum_n.
\]
We obtain:
\begin{lemma}
\label{lem:GTRJordan}
For each distribution vector $\pi$, the set $\text{GTR}_\pi$ forms a Jordan algebra.
\end{lemma}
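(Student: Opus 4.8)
The plan is to recognise $\text{GTR}_\pi$ as the intersection of two structures we have already identified as Jordan algebras --- namely $\text{Comm}^T(D(\pi))$ and $\zerocolsum_n$ --- and then to use the elementary fact that such an intersection is again a Jordan algebra.

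First I would note that the matrix reformulation of the detailed balance equations established just above the statement gives precisely
\[
\text{GTR}_\pi = \text{Comm}^T(D(\pi)) \cap \zerocolsum_n,
\]
where $D(\pi)$ is the diagonal matrix with entries $\pi_i$. By Lemma~\ref{lem:commT}, $\text{Comm}^T(D(\pi))$ is a Jordan subalgebra of $\mat_n(\RR)$; and by Fact~\ref{fact:rateSubAlgebra} together with Lemma~\ref{lem:matrixIffJordanAndLie}, $\zerocolsum_n$ is a matrix algebra and hence, in particular, a Jordan subalgebra of $\mat_n(\RR)$ under the same product $A \odot B = AB + BA$.

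Next I would record the routine observation that the intersection of two Jordan subalgebras of $\mat_n(\RR)$ (with respect to the common product $\odot$) is again a Jordan subalgebra: the intersection of two linear subspaces is a linear subspace, and if $A,B$ lie in both then $A \odot B$ lies in both, so the intersection is a subspace closed under $\odot$ and therefore a Jordan algebra by Lemma~\ref{lem:jordanProduct}. Applying this to the displayed identity completes the argument.

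I do not anticipate a genuine obstacle; the only point needing a moment's care is confirming that $\zerocolsum_n$ is itself closed under the Jordan product $\odot$, which is immediate from parts (1.) and (2.) of Fact~\ref{fact:rateSubAlgebra}. Should one prefer to sidestep the intersection principle, an equally brief alternative is to argue directly: for $Q_1, Q_2 \in \text{GTR}_\pi$, closure $Q_1 \odot Q_2 \in \zerocolsum_n$ follows from Fact~\ref{fact:rateSubAlgebra}, while the identity $D(\pi)(Q_1 \odot Q_2) = (Q_1 \odot Q_2)^T D(\pi)$ is exactly the computation carried out in the proof of Lemma~\ref{lem:commT}.
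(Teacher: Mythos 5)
Your argument is correct and is essentially the paper's own proof: the paper likewise writes $\text{GTR}_\pi=\text{Comm}^T(D(\pi))\cap \zerocolsum_n$ and invokes Lemma~\ref{lem:commT}, treating the intersection of two Jordan algebras as again a Jordan algebra. You merely spell out the intersection principle (and a direct alternative) in more detail than the paper does, which is fine.
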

\begin{proof}
The result follows by applying Lemma~\ref{lem:commT} and expressing $\text{GTR}_\pi$ as the intersection of two Jordan algebras:
\[
\text{GTR}_\pi=\text{Comm}^T(D(\pi))\cap \zerocolsum_n=\left\{Q\in \mat_n(\RR):D(\pi)Q=Q^TD(\pi)\right\}\cap \zerocolsum_n.
\]

\end{proof}

\begin{theorem}
\label{thm:GTRExponential}
For each distribution vector $\pi$, the time-reversible model $\text{GTR}_\pi^+$ is uniformization stable. 
\end{theorem}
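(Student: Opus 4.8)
The plan is to reduce the statement to Lemma~\ref{lem:GTRJordan} together with the observation that, for \emph{fixed} $\pi$, the defining equation $D(\pi)Q = Q^TD(\pi)$ is linear in $Q$, so $\text{GTR}_\pi$ is a genuine linear subspace of $\zerocolsum_n$ and $\text{GTR}_\pi^+$ is a linear Markov model in the sense of Definition~\ref{def:markovModel}. (The only reason the \emph{general} time-reversible model is non-linear is that one lets $\pi$ vary, producing the cubic Kolmogorov constraints; this subtlety disappears once $\pi$ is pinned down.) Given this, one could invoke Theorem~\ref{thm:jordanIffExponential} directly after checking $\spam_\RR(\text{GTR}_\pi^+) = \text{GTR}_\pi$, but it is cleaner to argue the containment $e^{Qt}-I_n\in\text{GTR}_\pi^+$ by hand, reusing the ingredients of that theorem's proof.

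Concretely, I would first fix $Q\in\text{GTR}_\pi^+\subseteq\text{GTR}_\pi$ and $t\geq 0$. Since $\text{GTR}_\pi$ is a linear subspace, $Qt\in\text{GTR}_\pi$, and since $\text{GTR}_\pi$ is a Jordan algebra (Lemma~\ref{lem:GTRJordan}), Lemma~\ref{lem:jordanIffClosedUnderPowers} gives $(Qt)^k\in\text{GTR}_\pi$ for every $k\in\mathbb{N}$. Hence every partial sum $\sum_{k=1}^{N}(Qt)^k/k!$ lies in $\text{GTR}_\pi$, and because $\text{GTR}_\pi$ is a finite-dimensional, hence topologically closed, subspace, the limit $e^{Qt}-I_n = \sum_{k\geq 1}(Qt)^k/k!$ also lies in $\text{GTR}_\pi$. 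It then remains to see that $e^{Qt}-I_n$ satisfies the rate-matrix sign constraints: by Lemma~\ref{lem:positiveRateExponentialSubset}, $e^{Qt}\in\unitcolsum_n^+$, so all its off-diagonal entries are non-negative; these coincide with the off-diagonal entries of $e^{Qt}-I_n$, while $(e^{Qt}-I_n)\oneVec = \oneVec-\oneVec = 0$, so $e^{Qt}-I_n\in\zerocolsum_n^+$. Therefore $\hat Q := e^{Qt}-I_n\in\text{GTR}_\pi\cap\zerocolsum_n^+ = \text{GTR}_\pi^+$, which is exactly the condition in Definition~\ref{def:expclosed}.

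If one prefers the Theorem~\ref{thm:jordanIffExponential} route instead, the remaining point is to verify $\spam_\RR(\text{GTR}_\pi^+)=\text{GTR}_\pi$, i.e.\ that $\text{GTR}_\pi$ is minimal; I would do this by exhibiting an explicit basis of $\text{GTR}_\pi$ made of bona fide rate matrices: for $1\leq i<j\leq n$, let $Q^{(ij)}$ have off-diagonal entry $\pi_j$ in position $(i,j)$, entry $\pi_i$ in position $(j,i)$, zeros in all other off-diagonal positions, and diagonal entries chosen to give zero row sums. Each $Q^{(ij)}$ is non-negative off the diagonal (as $\pi$ is strictly positive) and satisfies $\pi_iq_{ij}=\pi_i\pi_j=\pi_jq_{ji}$, so $Q^{(ij)}\in\text{GTR}_\pi^+$; and since these $\binom{n}{2}$ matrices have pairwise disjoint off-diagonal supports, they are linearly independent and span the $\binom{n}{2}$-dimensional space $\text{GTR}_\pi$.

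I do not expect a serious obstacle here: the work has been front-loaded into Lemma~\ref{lem:GTRJordan} and Lemma~\ref{lem:jordanIffClosedUnderPowers}. The one place to be careful is not to conflate ``$e^{Qt}-I_n$ lies in the linear space $\text{GTR}_\pi$'' with ``it lies in the model $\text{GTR}_\pi^+$'': bridging the two requires the positivity input from Lemma~\ref{lem:positiveRateExponentialSubset}, exactly as in the general discussion following Definition~\ref{def:expclosed}. A secondary point worth stating explicitly is why linearity of $\text{GTR}_\pi$ holds for fixed $\pi$, since the surrounding text emphasises that the general time-reversible model is a cubic algebraic variety.
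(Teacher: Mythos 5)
Your proposal is correct and is essentially the paper's argument: the fallback route (minimality via the matrices $Q^{(ij)}$, which are exactly the paper's $\hat{L}_{ij}=\pi_jL_{ij}+\pi_iL_{ji}$, followed by Lemma~\ref{lem:GTRJordan} and Theorem~\ref{thm:jordanIffExponential}) is precisely how the paper proves it. Your primary route merely inlines the easy direction of Theorem~\ref{thm:jordanIffExponential} (closure under powers, topological closedness of the fixed-$\pi$ subspace, plus the positivity input from Lemma~\ref{lem:positiveRateExponentialSubset}), which correctly sidesteps the minimality check but uses the same ingredients.
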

\begin{proof}

The result is an easy application of Lemma~\ref{lem:GTRJordan} and Theorem~\ref{thm:jordanIffExponential}. Note that $\text{GTR}_{\pi}$ is minimal, in the sense of Definition \ref{def:minimalSubspace}, and hence Theorem \ref{thm:jordanIffExponential} applies here. To see this, an easy check confirms that each $Q = \left( q_{ij} \right)\in \text{GTR}_{\pi}$ can be written as $Q= \sum_{ij}\frac{q_{ij}}{\pi_j} \hat{L}_{ij}$, where each $\hat{L}_{ij}:=\pi_jL_{ij}+\pi_iL_{ji}$ is in $\text{GTR}_{\pi}^+$. Hence $\text{GTR}_{\pi}=\spam_{\RR}\left(\text{GTR}_{\pi}^+\right)$.
\end{proof}

We now state the following fact that is possibly well-known but is usually not explicitly noted:
\begin{corollary}
The \emph{general time-reversible model} is a uniformization stable Markov model and, for $n>2$, is non-linear.
\end{corollary}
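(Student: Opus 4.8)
The plan is to read ``the general time-reversible model'' as the set $\text{GTR}^+\subseteq\zerocolsum_n^+$ of rate matrices satisfying the Kolmogorov criterion --- equivalently, $\text{GTR}^+=\bigcup_\pi\text{GTR}_\pi^+$, the union over all distribution vectors $\pi$ of the models treated in this section --- and then to dispatch the two claims in turn. Uniformization stability is inherited from Theorem~\ref{thm:GTRExponential}: given $Q\in\text{GTR}^+$ and $t\ge 0$, fix $\pi$ with $Q\in\text{GTR}_\pi^+$; since $\text{GTR}_\pi$ is a linear subspace and $tQ$ retains non-negative off-diagonal entries, $tQ\in\text{GTR}_\pi^+$, whence $e^{Qt}-I_n\in\text{GTR}_\pi^+\subseteq\text{GTR}^+$ by Theorem~\ref{thm:GTRExponential}, which is exactly the requirement of Definition~\ref{def:expclosed}. (If $\pi$ has zero entries --- the one case not literally covered by Theorem~\ref{thm:GTRExponential} --- I would argue directly: $D(\pi)Q=Q^TD(\pi)$ forces $D(\pi)Q^k=(Q^k)^TD(\pi)$ for all $k$, hence $D(\pi)e^{Qt}=(e^{Qt})^TD(\pi)$, and since $e^{Qt}-I_n$ is a rate matrix by Lemma~\ref{lem:positiveRateExponentialSubset} it again obeys detailed balance for $\pi$ and so lies in $\text{GTR}^+$.)

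For non-linearity when $n>2$, the first step is the span computation $\spam_\RR(\text{GTR}^+)=\zerocolsum_n$. For each ordered pair $i\neq j$ and each strictly positive $\pi$, the matrix $\hat{L}_{ij}=\pi_jL_{ij}+\pi_iL_{ji}$ lies in $\text{GTR}_\pi^+\subseteq\text{GTR}^+$ --- this is precisely the spanning set appearing in the proof of Theorem~\ref{thm:GTRExponential} --- so by choosing two distributions with different ratios $\pi_i:\pi_j$ and subtracting we recover $L_{ij}$ and $L_{ji}$ individually; hence $\spam_\RR(\text{GTR}^+)$ contains every elementary rate matrix and equals $\zerocolsum_n$. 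Now suppose for contradiction that $\text{GTR}^+$ were a linear Markov model. By Lemma~\ref{lem:ifSubspaceMinimalThenEqualToSpan} together with the Fact immediately following it, we would then have $\text{GTR}^+=\spam_\RR(\text{GTR}^+)\cap\zerocolsum_n^+=\zerocolsum_n\cap\zerocolsum_n^+=\zerocolsum_n^+$, i.e.\ every rate matrix would be time-reversible. To refute this I would point to the directed-cycle rate matrix $C:=L_{12}+L_{23}+\cdots+L_{n-1,n}+L_{n,1}\in\zerocolsum_n^+$: its only nontrivial cycle $1\to 2\to\cdots\to n\to 1$ has forward product $q_{12}q_{23}\cdots q_{n,1}=1$ and backward product $0$, so $C$ fails the Kolmogorov criterion (equivalently, the detailed balance equations for $C$ force $\pi=0$), and therefore $C\notin\text{GTR}^+$ --- a contradiction. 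The hypothesis $n>2$ is essential: for $n\le 2$ there are no cycles of length $\ge 3$, every rate matrix is reversible, $\text{GTR}^+=\zerocolsum_n^+$, and the model is linear.

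I do not anticipate a real obstacle: the whole argument is bookkeeping layered on Theorem~\ref{thm:GTRExponential} and the structural facts about linear Markov models, with the directed-cycle matrix and the short span computation as the only constructive ingredients. The one point that genuinely requires care is fixing the precise meaning of ``the general time-reversible model'' --- in particular whether non-strictly-positive stationary vectors are admitted --- and confirming that, under the adopted reading, it is exactly the Kolmogorov variety; this is what simultaneously keeps $\text{GTR}^+$ linear for $n=2$ and makes the directed-cycle matrix a legitimate witness of non-linearity for every $n>2$.
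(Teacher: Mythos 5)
Your proof is correct. The uniformization-stability half is exactly the paper's argument: write $\text{GTR}^+=\cup_\pi\text{GTR}^+_\pi$ and inherit stability from Theorem~\ref{thm:GTRExponential} (your parenthetical about degenerate $\pi$ is unnecessary under the paper's standing assumption that $\pi$ is strictly positive, but it does no harm). For non-linearity, however, you take a genuinely different route. The paper produces two explicit $3\times 3$ reversible matrices $Q\in\text{GTR}_\pi$, $Q'\in\text{GTR}_{\pi'}$ whose sum is a rate matrix violating detailed balance with respect to its stationary distribution, so additive closure --- which any linear Markov model must have by the Fact following Lemma~\ref{lem:ifSubspaceMinimalThenEqualToSpan} --- fails; the general $n>3$ case is then handled by padding these matrices with zeros. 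You instead first compute $\spam_\RR(\text{GTR}^+)=\zerocolsum_n$ by varying $\pi$ in the generators $\hat L_{ij}=\pi_jL_{ij}+\pi_iL_{ji}$ (borrowed from the proof of Theorem~\ref{thm:GTRExponential}), so linearity would force $\text{GTR}^+=\zerocolsum_n^+$, and then refute this with the single directed-cycle matrix $C=L_{12}+L_{23}+\cdots+L_{n,1}$, which cannot satisfy detailed balance for any positive $\pi$. Your version is uniform in $n$ (no embedding step, no computation of a stationary distribution) and isolates the cleanest possible witness of irreversibility; the paper's version is more self-contained in that it never needs the span of $\text{GTR}^+$ and additionally exhibits the concrete failure of closure under sums of reversible generators, which is the structural fact the section is really about. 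The only point to state carefully in your write-up is the one you already flag: the model is the union over strictly positive distribution vectors, which is what makes $C$ a legitimate counterexample and keeps the $n=2$ case linear.
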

\begin{proof}
The general time-reversible model can be expressed as a union parameterised over distribution vectors:
\[
\text{GTR}^+=\cup_{\pi} \text{GTR}^+_\pi=\left(\cup_{\pi} \text{GTR}_\pi\right)\cap \zerocolsum_n^+,
\]
(where it should be noted that this is not a set partition since, in particular, the zero matrix occurs in every $\text{GTR}^+_\pi$, together with further non-trivial cases). 
Since each $\text{GTR}^+_\pi$ is uniformization stable, so is $\text{GTR}^+$.
Note that for $n=2$, $\text{GTR}^+$ is linear, as any 2 state Markov chain is time-reversible. That is, $\text{GTR}^+ = \zerocolsum_2^+$, which also holds for $n=1$. We establish that $\text{GTR}^+$ is not a linear model for $n>2$. Consider,
\[
Q
=
\begin{pmatrixT}
-3 & 1 & 1 \\
1 & -3 & 1\\
2 & 2 & -2
\end{pmatrixT}
\in \text{GTR}_{\pi},
\qquad 
Q'=
\begin{pmatrixT}
-1 & 0 & 1 \\
0 & 0 & 0\\
1 & 0 & -1
\end{pmatrixT}
\in \text{GTR}_{\pi'}.
\]
with $\pi=(1/4,1/4,1/2)$ and $\pi'=(1/3,1/3,1/3)$.
A short computation establishes that $\widehat{Q}=Q+Q'$ has the (unique) stationary distribution vector $\widehat{\pi}=\textstyle{\frac{1}{24}}(7,6,11)$, however $D(\widehat{\pi})\widehat{Q}\neq \widehat{Q}^TD(\widehat{\pi})$ and hence $\widehat{Q}\notin \text{GTR}^+$. Considering the $3 \times 3$ case is sufficient as we can take two $n \times n$ matrices $R,R'$ with $Q$ and $Q'$ embedded in their top left corners, respectively, and zero entries elsewhere. The stationary distribution $\hat{\pi}_R$ of $\hat{R}=R+R'$ is not unique for $n>3$ but the first three entries are the same as $\hat{\pi}$ (up to scaling), while the remaining entries can take any value. Hence we observe $D\left(\hat{\pi}_R\right)\hat{R} \neq \hat{R}^TD\left(\hat{\pi}_R\right)$ in general.
\end{proof}

\subsection{Equivariant time-reversible models}

Further to the above discussion, we show that some other models used for model testing in phylogenetics from the so-called `GTR hierarchy' \cite[Chapter 13]{felsenstein} are also uniformization stable. Rather than taking time-reversible models with arbitrary constraints, we define the concept of equivariant time-reversible models and study the resulting hierarchy of models for uniformization stability. We initially demonstrate this through example with the so-called `Tamura-Nei 1993' model \cite{tamuraNei} (hereby referred to as TN), a well-known model of DNA evolution in phylogenetic modelling. This model can be understood as a combination of the four-state equal-input model, together with 2 additional parameters to specify the relative rates of two distinct transitions (substitutions between the purines $A$ and $G$, and between the pyrimidines $C$ and $T$) and transversions (all other substitutions). 
From here on we use the ordering $A,G,C,T$ of the nucleotides as labels on the rows and columns of matrices. We now define the Tamura-Nei Model $\text{TN}_{\pi}^+$ for a fixed stationary distribution vector $\pi$, and note that, as for the GTR model, we can define the full model $\text{TN}^+$ as the union of the $\text{TN}_{\pi}^+$ over all possible distribution vectors $\pi$. Throughout this section for simplicity we take $\pi=\left(\pi_1,\pi_2,\pi_3,\pi_4\right)$ fixed and generic. That is, the only constraints we allow on $\pi$ is that each $\pi_i$ is positive and $\sum_{i}\pi_i=1$. 

Specifically, we have
\[
\text{TN}^+_\pi=
\left\{
\begin{pmatrix}
\ast & \pi_2\kappa_1 & \lambda\pi_3 & \lambda\pi_4 \\
\pi_1\kappa_1 & \ast & \lambda\pi_3 & \lambda\pi_4 \\
\lambda\pi_1 & \lambda\pi_2 & \ast & \pi_4\kappa_2\\
\lambda\pi_1 & \lambda\pi_2 & \pi_3\kappa_2 & \ast
\end{pmatrix}:
\kappa_1,\kappa_2,\lambda\geq 0
\right\},
\]
where, for simplicity, we use notation where the diagonal entries of matrices in $\zerocolsum_n$ are labelled $\ast$, since these values are determined by the row sum condition. Note that each $\text{TN}_{\pi}:=\spam_{\RR}\left(\text{TN}_{\pi}^+\right)$ forms a linear space and is more specifically a linear subspace of $\text{GTR}_{\pi}$.

We now show that $\text{TN}_{\pi}^+$ is uniformization stable for each choice of distribution vector $\pi$. We do this by first constructing a basis for the linear space $\text{TN}_{\pi}$ and demonstrating that all Jordan products of these basis matrices are contained in $\text{TN}_{\pi}$. Consider an arbitrary matrix $Q\in \text{TN}_{\pi}$, where $\pi=\left(\pi_1,\pi_2,\pi_3,\pi_4\right)$, and observe,

\begin{align*}
Q=\begin{pmatrix}
    \ast & \pi_2 \kappa_1 & \lambda\pi_3& \lambda\pi_4 \\
    \pi_1 \kappa_1 & \ast & \lambda\pi_3 & \lambda\pi_4 \\
    \lambda\pi_1 & \lambda\pi_2 & \ast & \pi_4 \kappa_2 \\
    \lambda\pi_1 & \lambda\pi_2  & \pi_3 \kappa_2 & \ast 
\end{pmatrix} &= \kappa_1 \begin{pmatrix}
    -\pi_2 & \pi_2 & 0 & 0 \\
    \pi_1 & - \pi_1 & 0 & 0 \\
    0 & 0 & 0 &  0 \\
    0 & 0  & 0 & 0
\end{pmatrix} + \kappa_2 \begin{pmatrix}
    0 & 0 & 0 & 0\\
    0 & 0 & 0 & 0 \\
    0 & 0 & -\pi_4 & \pi_4 \\
    0 & 0  & \pi_3 & -\pi_3 
\end{pmatrix}\\&+\lambda\begin{pmatrix}
    -(\pi_3+\pi_4) & 0 & \pi_3 & \pi_4 \\
    0 & -(\pi_3+\pi_4) & \pi_3 & \pi_4 \\
    \pi_1 & \pi_2 & -(\pi_1+\pi_2) & 0 \\
    \pi_1 & \pi_2  & 0 & -(\pi_1+\pi_2) 
\end{pmatrix}\\
&= \kappa_1 A + \kappa_2 B + \lambda C.
\end{align*}
It is clear that the matrices $A,B$ and $C$ form a basis for the linear space $\text{TN}_{\pi}$. We then have to check that the Jordan products of these basis matrices are contained in $\text{TN}_{\pi}$. We first check the squares of each matrix. One can confirm that $A^2=-(\pi_1+\pi_2)A$, $B^2=-(\pi_3+\pi_4)B$, and $C^2=(\pi_3+\pi_4)A+(\pi_1+\pi_2)B-C$.
Additionally, we have $AB=BA=0$, $AC=CA= -(\pi_3+\pi_4)A$, and $BC=CB = (-\pi_1+\pi_2)B$. Hence the Jordan products $A\odot A$, $B\odot B$, $C \odot C$, $A\odot B$, $A \odot C$, and $B \odot C$ are all contained in $\text{TN}_{\pi}$.
\begin{theorem}
The Tamura-Nei 1993 model $\text{TN}^+$ is uniformization stable.

\end{theorem}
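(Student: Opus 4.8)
The plan is to assemble the pieces already in place: the explicit basis $\{A,B,C\}$ for $\text{TN}_\pi$, the Jordan-product computations carried out above, and Theorem~\ref{thm:jordanIffExponential}; and then to lift the conclusion from a fixed distribution vector to the full model exactly as was done for $\text{GTR}^+$.

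First, for each fixed strictly positive $\pi$, I would check that $\text{TN}_\pi^+$ is a \emph{linear} Markov model in the sense of Definition~\ref{def:markovModel}, with defining subspace $\text{TN}_\pi=\spam_\RR(\text{TN}_\pi^+)$. This amounts to verifying $\text{TN}_\pi\cap\zerocolsum_n^+=\text{TN}_\pi^+$: a matrix $\kappa_1 A+\kappa_2 B+\lambda C$ with $\kappa_1,\kappa_2,\lambda\in\RR$ has off-diagonal entries equal to $\kappa_1$, $\kappa_2$, or $\lambda$ times a positive multiple of some $\pi_i$, so, since every $\pi_i>0$, non-negativity of these entries forces $\kappa_1,\kappa_2,\lambda\geq 0$. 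Hence $\text{TN}_\pi\cap\zerocolsum_n^+$ is precisely the set displayed in the definition of $\text{TN}_\pi^+$.

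Second, because the Jordan product is bilinear, the fact (established above) that the six products $A\odot A$, $B\odot B$, $C\odot C$, $A\odot B$, $A\odot C$, $B\odot C$ all lie in $\text{TN}_\pi$ shows that $\text{TN}_\pi$ is closed under $\odot$, hence is a Jordan algebra by Lemma~\ref{lem:jordanProduct}. Since $\text{TN}_\pi=\spam_\RR(\text{TN}_\pi^+)$, Theorem~\ref{thm:jordanIffExponential} applies directly and yields that $\text{TN}_\pi^+$ is uniformization stable for every $\pi$. Finally, I would pass to the full model $\text{TN}^+=\bigcup_\pi \text{TN}_\pi^+$ mirroring the argument used for $\text{GTR}^+$ after Theorem~\ref{thm:GTRExponential}: given $Q\in\text{TN}^+$, choose $\pi$ with $Q\in\text{TN}_\pi^+$; then for every $t\geq 0$ we have $e^{Qt}-I_n\in\text{TN}_\pi^+\subseteq\text{TN}^+$, which is exactly uniformization stability of $\text{TN}^+$ in the sense of Definition~\ref{def:expclosed}.

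There is no genuine obstacle here: the real computational content — that the Jordan products of the basis matrices close up inside $\text{TN}_\pi$ — has already been carried out, so what remains is essentially bookkeeping. The only two points deserving care are (i) the verification that $\text{TN}_\pi^+$ is genuinely linear, i.e. that no rate matrix of $\text{TN}_\pi$ carries a negative coefficient on one of $A$, $B$, $C$, which is precisely where the strict positivity of $\pi$ enters; and (ii) the union step, which is formal but relies on uniformization stability being defined for arbitrary Markov models rather than only linear ones.
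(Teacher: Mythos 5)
Your proposal is correct and follows essentially the same route as the paper: it invokes the already-computed closure of the Jordan products of the basis matrices $A,B,C$, verifies that $\text{TN}_\pi=\spam_\RR(\text{TN}_\pi^+)$ with $\text{TN}_\pi\cap\zerocolsum_n^+=\text{TN}_\pi^+$ (the paper phrases this as minimality of $\text{TN}_\pi$), applies Theorem~\ref{thm:jordanIffExponential} to each $\text{TN}_\pi^+$, and then passes to the union over $\pi$ exactly as for $\text{GTR}^+$. Your explicit justification of linearity via strict positivity of the $\pi_i$ is a slightly more detailed rendering of the step the paper leaves as an assertion, but it is the same argument.
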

\begin{proof}
Each $\text{TN}_{\pi}$ is minimal in the sense of Definiton \ref{def:minimalSubspace}, hence by the above discussion and Theorem \ref{thm:jordanIffExponential} each $\text{TN}_{\pi}^+$ is uniformization stable. As $\text{TN}^+$ is the union $\text{TN}^+=\cup_{\pi} \text{TN}^+_{\pi}$, we see that $\text{TN}^+$ is uniformization stable.
\end{proof}

We now observe that we can exploit the natural symmetry of the rate matrices in $\text{TN}_{\pi}$ to define this model in algebraic terms. Further, this definition can be generalised to define a hierarchy of time-reversible models (see Definition \ref{def:equitimerev}). Before this, we first recall a standard paramaterisation of time-reversible matrices that factorises a general $Q \in \text{GTR}_{\pi}$ into two symmetric matrices $D(\pi)$ and $T$, expressed as $Q=TD(\pi)$, where $D(\pi)$ is as defined in the previous section in the definiton of the $\text{GTR}$ model \cite{lanave}. We modify this standard form slightly by extracting the diagonal entries of $Q$ prior to the factorisation. This gives the general form,
$$Q=SD(\pi) + \text{diag}(Q).$$
Here $\text{diag}(Q)$ is a diagonal matrix containing the diagonal elements of $Q$, and the $S$ matrix is a symmetric matrix that contains the `non-$\pi$' parameters of $Q$. For example, for a general $Q \in \text{TN}_{\pi}$ with $\pi=(\pi_1,\pi_2,\pi_3,\pi_4)$,
$$Q=\begin{pmatrix} 0 & \kappa_1 & \lambda & \lambda \\
\kappa_1 & 0 & \lambda & \lambda \\ 
\lambda & \lambda & 0 & \kappa_2 \\ 
\lambda & \lambda& \kappa_2 & 0\end{pmatrix}\begin{pmatrix} \pi_1 & 0& 0& 0 \\ 
0& \pi_2 & 0 &0\\
0 & 0 &\pi_3 & 0\\
0 & 0 & 0 & \pi_4 \end{pmatrix}+ \text{diag}(Q).$$
As $Q$ is a rate matrix, we note that the diagonal matrix $\text{diag}(Q)$ is completely defined by the entries in $S$ and $D(\pi)$. That is, $\text{diag}(Q)$ is not dependent on any additional parameters.

Keeping in mind the $A,G,C,T$ ordering of states that we are assuming, consider the permutation group $G_{\text{TN}}$ as a subgroup of the permutation group on 4 elements, $S_4$:
$$G_{\text{TN}}=\{e,(12),(34),(12)(34)\},$$ and associate to each permutation $\sigma\in G_{\text{TN}}$ with a $4\times 4$ permutation matrix $K_\sigma$. Each permutation matrix $K_{\sigma}$ is defined as the matrix satisfying $e_iK_{\sigma} := e_{\sigma(i)}$ for all $i \in [n]$, where $e_i$ denotes the row vector containing 1 in the i'th position and 0's elsewhere.

One can see that it is the $S$ matrices in the parameterisation of each $Q$ that has symmetry under $G_{\text{TN}}$, rather than the entire $Q$ matrix. 
That is, for some $Q=SD(\pi)+\text{diag}(Q)\in \text{TN}_\pi$, the action of some $\sigma \in G_{\text{TN}}$ gives,
\begin{align*}
K_{\sigma}^{-1} SD(\pi)K_{\sigma}& = K_{\sigma}^{-1}( SD(\pi))K_{\sigma}\\
&= K_{\sigma}^{-1}SK_{\sigma}D(\pi K_{\sigma}) \\
&= SD(\pi K_{\sigma})
\end{align*}

Note that here we do not need to worry about $K_{\sigma}^{-1}\text{diag}(Q)K_{\sigma}$ as $\zerocolsum_n$ is closed under the action of $S_n$. That is, the entries of $K_{\sigma}^{-1}\text{diag}(Q)K_{\sigma}$ will be the diagonal elements required to make $K_{\sigma}^{-1}QK_{\sigma}=K_{\sigma}^{-1} SD(\pi)K_{\sigma}+K_{\sigma}^{-1}\text{diag}(Q)K_{\sigma}$ a zero row sum matrix.
Hence we have,
$$K_{\sigma}^{-1}QK_{\sigma}= S D(\pi K_{\sigma}) + \text{diag}(K_{\sigma}^{-1}QK_{\sigma}) \in \text{TN}_{\pi K_{\sigma}},$$
meaning that $G_{\text{TN}}$ does not give a group action on the individual $\text{TN}_\pi$ but rather acts on the entire $\text{TN}$ model, sending $Q\in \text{TN}^+_\pi$ to some $Q'\in \text{TN}^+_{\pi K_\sigma}$.

Considering the above parameterisation $Q=SD(\pi)+\text{diag}(Q)$ of matrices $Q\in\text{GTR}$ one can see that each $Q$ in GTR can be completely defined as a function of two vectors: the stationary distribution $\pi$, and a vector containing the elements of the associated $S$ matrix that we denote by $s$. More specifically, let $S=(s_{ij})$, and define $s:=(s_{12},s_{13},s_{14},s_{23},s_{24},s_{34})$. Hence each $Q\in \text{GTR}_{\pi}$ can be written $Q\equiv \mathcal{Q}(s,\pi)$, that is, $Q=\left(s_{ij} \pi_i\right)$. From here on we write $\mathcal{Q}(s,\pi)$ to refer to the time-reversible matrix that is defined by the vectors $s$ and $\pi$ in the above way. For example, matrices from $\text{TN}_{\pi}$ can be written $\mathcal{Q}(s,\pi)$ with $s=(\kappa_1,\lambda,\lambda,\lambda,\lambda,\kappa_2)$ and $\kappa_1,\kappa_2,\lambda \in \RR$.
With this notation, the equivariant condition $K^{-1}SK=S$ is equivalent to $K^{-1}\mathcal{Q}(s,\pi)K=\mathcal{Q}(s,\pi K)$. That is, the $s$ vector is unchanged, but the $\pi$ vector is permuted according to $K$.  
To explicitly illustrate the action of the permutations on the $\text{TN}$
rate matrices, consider 
\[
\mathcal{Q}(s,\pi)=
\begin{pmatrixT}
\ast & \pi_1 \kappa_1 & \lambda \pi_1 & \lambda \pi_1 \\
\pi_2 \kappa_1 & \ast & \lambda\pi_2 & \lambda\pi_2 \\
\lambda \pi_3 & \lambda\pi_3 & \ast & \pi_3 \kappa_2\\
\lambda\pi_4 & \lambda\pi_4 & \pi_4 \kappa_2 & \ast
\end{pmatrixT}\in \text{TN}_{\pi}
\]
with $s=(\kappa_1,\lambda,\lambda,\lambda,\lambda,\kappa_2)$ and $\pi=(\pi_1,\pi_2,\pi_3,\pi_4)$.
Taking $\sigma=(12)(34)\in G_{\text{TN}}$ and $K\equiv K_\sigma$, we find:

\[
K^{-1} \mathcal{Q}(s,\pi) K
=
\begin{pmatrix}
0 & 0 & 0 & 1 \\
0 & 0 & 1 & 0 \\
1 & 0 & 0 & 0\\
0 & 1 & 0 & 0
\end{pmatrix}
\begin{pmatrixT}
\ast & \pi_1\kappa_1 & \lambda\pi_1 & \lambda\pi_1 \\
\pi_2\kappa_1 & \ast & \lambda\pi_2 & \lambda\pi_2 \\
\lambda\pi_3 & \lambda\pi_3 & \ast & \pi_3 \kappa_2\\
\lambda\pi_4 & \lambda\pi_4 & \pi_4\kappa_2 & \ast
\end{pmatrixT}
\begin{pmatrix}
0 & 0 & 1 & 0 \\
0 & 0 & 0 & 1 \\
0 & 1 & 0 & 0\\
1 & 0 & 0 & 0
\end{pmatrix}
=
\begin{pmatrixT}
\ast & \pi_2 \kappa_1 & \lambda\pi_2 & \lambda\pi_2 \\
\pi_1 \kappa_1 & \ast & \lambda\pi_1 & \lambda\pi_1 \\
\lambda\pi_4 & \lambda\pi_4 & \ast & \pi_4 \kappa_2\\
\lambda\pi_3 & \lambda\pi_3 & \pi_3 \kappa_2 & \ast
\end{pmatrixT}
\]
Clearly the $s$ vector is unchanged, 
 but the $\pi$ vector is permuted according to the choice of $\sigma$, that is $K^{-1} \mathcal{Q}(s,\pi) K=\mathcal{Q}(s,\pi K)$.

We can hence define each $\text{TN}_{\pi}$ as follows,

$$\text{TN}_{\pi}:= \{\mathcal{Q}(s,\pi) \in \text{GTR}_{\pi}: K_{\sigma}^{-1}\mathcal{Q}(s,\pi)K_{\sigma}=\mathcal{Q}(s,\pi K_{\sigma}), \forall \sigma \in G_{\text{TN}}\}.$$
A quick check shows that this definition is equivalent to $\text{TN}_{\pi}=\spam_{\RR}(\text{TN}_{\pi}^+)$ where $\text{TN}_{\pi}^+$ is as defined at the beginning of this section.

We can now easily generalise this definition to an arbitrary group $G\leq S_n$ to define a hierarchy of $n$-state time-reversible models that exhibit $G$-symmetry. 
\begin{definition}
\label{def:equitimerev}
Given a permutation subgroup $G\leq S_n$, the $G$-\emph{equivariant, time-reversible} model $\text{TR}_G^+$ is defined as,

\[
\text{TR}_G^+=\cup_{\pi} \text{TR}^+_{(\pi,G)}=\left(\cup_{\pi} \text{TR}_{(\pi,G)}\right)\cap \zerocolsum_n^+,\] where,
\[\text{TR}_{(\pi,G)}:= \{\mathcal{Q}(s,\pi) \in \text{GTR}_{\pi}: K_{\sigma}^{-1}\mathcal{Q}(s,\pi)K_{\sigma}=\mathcal{Q}(s,\pi K_{\sigma}), \forall \sigma \in G\}.\]
%\demo
\end{definition}
The linear space $\text{TR}_{(\pi,G)}$ is minimal in the sense of Definition \ref{def:minimalSubspace}, that is, $\text{TR}_{(\pi,G)}=\spam_{\RR}(\text{TR}_{(\pi,G)}^+)$. The argument is a generalisation of the argument for the minimality of GTR (see the proof of Theorem \ref{thm:GTRExponential}), except that the relevant basis matrices will be conical combinations of the $\hat{L}_{ij}$ matrices defined in that proof.

We pause here to note that Draisma and Kuttler \cite[Example 3.1]{draisma} give an incorrect formulation of some time-reversible models in the context of their definition of equivariant Markov models. Specifically, they incorrectly characterise time-reversible models by defining them as a collection of rate matrices starting with a distribution $\pi$ on the `root' of the phylogenetic tree.
Additionally, they incorrectly characterise the symmetries of the well-known HKY model \cite{hky} by claiming that HKY is an equivariant model defined by symmetry imposed by the group $G_{\text{TN}}$.
 However, under our definition of $G$-equivariant time-reversible model, setting $G$ to be the dihedral group $D_4$ captures the HKY model. That is, $\text{TR}_{D_4}^+ = \text{HKY}^+$. This is unfortunate, since the otherwise astute definition of equivariant models given in that paper is useful for algebraically characterising a large hierarchy of Markov models (most of which are not time-reversible) and has led to various applications and mathematical analysis \cite{casanellas}. 

In general, the $G$-equivariant, time-reversible models are not uniformization stable, and we demonstrate this presently using the HKY model as an example. But first we state a lemma that will be useful in the following discussion.
\begin{lemma}
\label{lem:TRUnififfTRpiUnif}
For $G\leq S_n$, $\text{TR}_{G}^+$ is uniformization stable if and only if $\text{TR}_{(\pi,G)}^+$ is uniformization stable for every distribution vector $\pi$.
\end{lemma}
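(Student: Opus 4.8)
The plan is to prove both directions by a direct unpacking of the definitions. The ``only if'' direction is essentially immediate: suppose $\text{TR}_G^+$ is uniformization stable and fix a distribution vector $\pi$. Then $\text{TR}_{(\pi,G)}^+\subseteq \text{TR}_G^+$, so for any $Q\in \text{TR}_{(\pi,G)}^+$ and $t\geq 0$ there is some $\hat Q\in \text{TR}_G^+$ with $e^{Qt}-I_n=\hat Q$. To conclude I need to check that $\hat Q$ actually lies in $\text{TR}_{(\pi,G)}^+$ rather than in some other slice $\text{TR}_{(\pi',G)}^+$. This follows because $\pi Q=0$ (as established in the time-reversible setting, since $\pi$ is an equilibrium distribution for any $Q\in \text{GTR}_\pi$), and therefore $\pi e^{Qt}=\pi$, giving $\pi\hat Q=\pi(e^{Qt}-I_n)=0$. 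So $\hat Q$ shares the equilibrium $\pi$; combined with the fact that $e^{Qt}-I_n\in \text{GTR}_\pi$ (since $\text{GTR}_\pi$ is a Jordan algebra, hence closed under powers by Lemma~\ref{lem:jordanIffClosedUnderPowers}, hence closed under $e^{Qt}-I_n$), and with the $G$-equivariance condition being preserved under the exponential (conjugation by $K_\sigma$ commutes with the matrix exponential and with subtracting $I_n$), we get $\hat Q\in \text{TR}_{(\pi,G)}^+$. Thus $\text{TR}_{(\pi,G)}^+$ is uniformization stable.

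For the ``if'' direction, assume $\text{TR}_{(\pi,G)}^+$ is uniformization stable for every $\pi$. Take any $Q\in \text{TR}_G^+=\cup_\pi \text{TR}_{(\pi,G)}^+$; then $Q\in \text{TR}_{(\pi,G)}^+$ for some particular $\pi$, and for $t\geq 0$ there is $\hat Q\in \text{TR}_{(\pi,G)}^+\subseteq \text{TR}_G^+$ with $e^{Qt}-I_n=\hat Q$. Hence $\text{TR}_G^+$ is uniformization stable. This direction is the trivial one and needs little more than noting that the union is taken over all $\pi$, so membership in any one slice suffices.

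The only subtlety — and the step I expect to be the real (though still modest) obstacle — is the closure bookkeeping in the ``only if'' direction: verifying carefully that $e^{Qt}-I_n$ remains in the \emph{same} $\text{TR}_{(\pi,G)}$ as $Q$. The three facts I would isolate and check are (i) $\text{GTR}_\pi$ is closed under $Q\mapsto e^{Qt}-I_n$, which is Lemma~\ref{lem:GTRJordan} plus Lemma~\ref{lem:jordanIffClosedUnderPowers} plus topological closedness of the subspace; (ii) the stationary distribution is preserved, i.e. $\pi(e^{Qt}-I_n)=0$, which follows from $\pi Q=0$; and (iii) the equivariance defining condition $K_\sigma^{-1}(\cdot)K_\sigma$ with the $\pi\mapsto \pi K_\sigma$ twist is compatible with taking $e^{Qt}-I_n$, because $K_\sigma^{-1}(e^{Qt}-I_n)K_\sigma = e^{(K_\sigma^{-1}QK_\sigma)t}-I_n$ and one applies the defining relation for $Q$. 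Assembling (i)–(iii) shows $\hat Q\in \text{TR}_{(\pi,G)}^+$, completing the argument. Everything else is formal.
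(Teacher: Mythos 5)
Your ``if'' direction is fine and matches the paper, and your step (i) --- $e^{Qt}-I_n\in\text{GTR}_\pi$ via Lemma~\ref{lem:GTRJordan}, Lemma~\ref{lem:jordanIffClosedUnderPowers} and topological closedness of the subspace --- is a legitimate substitute for the paper's direct power-series verification that $D(\pi)(e^{Q}-I_n)=(e^{Q}-I_n)^TD(\pi)$. The genuine gap is your step (iii). The defining condition of $\text{TR}_{(\pi,G)}$ is not invariance of the matrix under conjugation; it is $K_\sigma^{-1}\mathcal{Q}(s,\pi)K_\sigma=\mathcal{Q}(s,\pi K_\sigma)$, i.e.\ conjugation lands in the \emph{different} space $\text{GTR}_{\pi K_\sigma}$ with the \emph{same} $s$-parameters. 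Writing $\hat Q=e^{Qt}-I_n=\mathcal{Q}(s',\pi)$, your identity $K_\sigma^{-1}\hat QK_\sigma=e^{(K_\sigma^{-1}QK_\sigma)t}-I_n=e^{\mathcal{Q}(s,\pi K_\sigma)t}-I_n$ is correct, but to conclude membership in $\text{TR}_{(\pi,G)}$ you would need $e^{\mathcal{Q}(s,\pi K_\sigma)t}-I_n=\mathcal{Q}(s',\pi K_\sigma)$, i.e.\ that the $s$-parameters of the exponential are unchanged when $\pi$ is replaced by $\pi K_\sigma$. That is exactly the statement you are trying to prove, and it is false in general: if (iii) held unconditionally, then combined with your (i) it would show that \emph{every} $\text{TR}_{(\pi,G)}^+$ is uniformization stable with no hypothesis on $\text{TR}_G^+$ at all, contradicting the HKY analysis later in the section, where $\text{TR}_{(\pi,D_4)}=\text{HKY}_\pi$ fails to be a Jordan algebra (the square of the basis matrix $A$ involves $\pi_1+\pi_2$ and $\pi_3+\pi_4$ asymmetrically), so by Theorem~\ref{thm:jordanIffExponential} some $e^{Qt}-I_n$ with $Q\in\text{HKY}_\pi^+$ lies in $\text{GTR}_\pi^+$ but violates precisely the equivariance condition you claim is inherited.

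The repair is the paper's route: the equivariance information for $\hat Q$ must come from the hypothesis rather than from $Q$. Uniformization stability of $\text{TR}_G^+$ places $\hat Q$ in $\text{TR}_G^+$, your step (i) (or the paper's series computation) places it in $\text{GTR}_\pi$, and then $\hat Q\in\text{GTR}_\pi\cap\text{TR}_G^+=\text{TR}_{(\pi,G)}^+$. Your step (ii), $\pi\hat Q=0$, is true but adds nothing: having $\pi$ as a stationary distribution is weaker than membership in $\text{GTR}_\pi$ and says nothing about the equivariance condition.
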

\begin{proof}
Suppose that $\text{TR}_{G}^+$ is uniformization stable for some $G\leq S_n$. Then for some $Q\in \text{TR}_{(\pi,G)}$ with $\pi$ fixed, we have
\begin{align*}
D(\pi)(e^Q-I_n)&=\left( D(\pi)+D(\pi)Q +  \frac{D(\pi)Q^2}{2} + \ldots \right) - D(\pi)\\ &= \left( D(\pi)+Q^TD(\pi) +  \frac{(Q^T)^2D(\pi)}{2} + \ldots \right) - D(\pi) \\
&= (e^{Q^T}-I_n)D(\pi)\\
&=(e^Q-I_n)^TD(\pi),
\end{align*}
where the last line follows from the fact that the matrix exponential preserves the matrix transpose.
Hence $e^{Q}-I\in \text{GTR}_{\pi} \cap \text{TR}_G^+ = \text{TR}_{(\pi,G)}^+$. So $\text{TR}_{(\pi,G)}^+$ is uniformization stable. The other direction of the proof is clear by the definition of $\text{TR}_G^+$.
\end{proof}

We define,
\[
\text{HKY}^+_\pi:=
\left\{
\begin{pmatrix}
\ast & \pi_2\kappa & \lambda \pi_3 & \lambda\pi_4 \\
\pi_1\kappa & \ast & \lambda\pi_3 & \lambda\pi_4 \\
\lambda\pi_1 & \lambda\pi_2 & \ast & \pi_4\kappa\\
\lambda\pi_1 & \lambda\pi_2 & \pi_3\kappa & \ast
\end{pmatrix}:
\kappa,\lambda \geq 0
\right\},
\]

Equivalently, the HKY model is captured under the $G$-equivariant time-reversible model definition by setting $G$ equal to the dihedral group $D_4$,
\[
D_4=\{e,(12),(34),(12)(34),(13)(24),(14)(23),(1324),(1423)\}<S_4,
\]
That is, 

$$\text{HKY}_\pi =\text{TR}_{(\pi,D_4)} = \{\mathcal{Q}(s,\pi) \in \text{GTR}_{\pi}: K_{\sigma}^{-1}\mathcal{Q}(s,\pi)K_{\sigma}=\mathcal{Q}(s,\pi K_{\sigma}), \forall \sigma \in D_4\} $$

We will now show that $\text{HKY}_{\pi}$ does not form a Jordan algebra and hence the HKY model is not uniformization stable. We proceed by following the same procedure we used earlier to prove that $\text{TN}_{\pi}$ formed a Jordan algebra.
Consider an arbitrary rate matrix $\mathcal{Q}(s,\pi)\in \text{HKY}_{\pi}^+$. 
We can decompose this matrix into a linear combination of basis matrices. %with $s=(2,1,1,1,1,2)$ and $\pi=\frac{1}{8}(1,2,2,3)$.

\begin{align*}
\mathcal{Q}(s,\pi)&=\begin{pmatrix}
\ast & \pi_2\kappa & \lambda \pi_3 & \lambda\pi_4 \\
\pi_1\kappa & \ast & \lambda\pi_3 & \lambda\pi_4 \\
\lambda\pi_1 & \lambda\pi_2 & \ast & \pi_4\kappa\\
\lambda\pi_1 & \lambda\pi_2 & \lambda\pi_3\kappa & \ast
\end{pmatrix} \\&= \kappa \begin{pmatrix}
    -\pi_2 & \pi_2 & 0 & 0 \\
    \pi_1 & - \pi_1 & 0 & 0 \\
    0 & 0& -\pi_4 & \pi_4 \\
    0 & 0  & \pi_3 & -\pi_3 
\end{pmatrix} +\lambda\begin{pmatrix}
    -(\pi_3+\pi_4) & 0 & \pi_3 & \pi_4 \\
    0 & -(\pi_3+\pi_4) & \pi_3 & \pi_4 \\
    \pi_1 & \pi_2 & -(\pi_1+\pi_2) & 0 \\
    \pi_1 & \pi_2  & 0 & -(\pi_1+\pi_2) 
\end{pmatrix}\\
&= \kappa A + \lambda B
\end{align*}
Then, checking the square of the basis matrix $A$,
$$A^2=\begin{pmatrix}
    \pi_2(\pi_1+\pi_2) & -\pi_2(\pi_1+\pi_2) & 0 & 0 \\
    -\pi_1(\pi_1+\pi_2) &  \pi_1(\pi_1+\pi_2) & 0 & 0 \\
    0 & 0& \pi_4(\pi_3+\pi_4) & -\pi_4(\pi_3+\pi_4) \\
    0 & 0  & -\pi_3(\pi_3+\pi_4) & \pi_3(\pi_3+\pi_4)
\end{pmatrix}.$$
To have $A^2 \in \text{HKY}_{\pi}$, we require $A^2=\alpha A$ for some $\alpha \in \RR$, as any dependence on $B$, would force the upper right and lower left blocks to be non-zero. However, such an $\alpha$ exists if and only if $\pi_1+\pi_2=0, \ \pi_3+\pi_4=0$, or $\pi_1+\pi_2 = \pi_3+\pi_4$, and as we are taking $\pi$ to be generic this is not necessarily the case. Hence $A^2\notin \text{HKY}_{\pi}$ and so $\text{HKY}_{\pi}$ does not form a Jordan algebra. Therefore, by Theorem \ref{thm:jordanIffExponential} and Lemma \ref{lem:TRUnififfTRpiUnif}, $\text{HKY}^+$ is not uniformization stable.

Taking $n\!=\!4$, we can explicitly list all $G$-equivariant time-reversible models by following the same general procedure outlined in this section for the TN and HKY models for each $\text{TR}_{G}^+$ for arbitrary subgroups $G \leq S_4$. That is, we construct a basis for the linear space $\text{TR}_{(\pi,G)}$, and check whether the Jordan products of these basis matrices are elements of $\text{TR}_{(\pi,G)}$. 
All possible inequivalent $G$-equivariant time-reversible models for $n\!=\!4$ are presented in Table 1. 
By inequivalent, we mean that no two models in the table have rate matrices that are permutation similar to the rate matrices of another model.

Many of the models in Table 1 are well known time-reversible models in the phylogenetic modelling literature \cite[Chap 13]{felsenstein}. The model K81u (K3STu) is the Kimura 3 parameter model but with unequal base frequencies (generic $\pi$) \cite{Kimura_1981}. The `transition models' TIM and TIM3 are captured by the two distinct $S_4$ subgroups of order 2 generated by $(12)$ and $(12)(34)$ respectively. The TIM3 model is uniformization stable, however the TIM model is not. Further, a quick check shows that the other transition model TIM2 is equivalent to TIM3 (take the copy of $S_2<S_4$ generated by the transposition $(34)$) and is hence also uniformization stable. The names of these models are consistent with the well known phylogenetic inference software IQ-TREE \cite{minh}. The names for the models $M_{12}$ and $M_{24}$ are consistent with the naming convention in the list of models derived in \cite{huelsenbeck}.
\begin{table}
\centering
\begin{tabular}{ |p{2cm}||p{6cm}|p{3cm}|p{3cm}|  }

 \hline
 Subgroup & Generator & Model Name & Uniformization Stable\\
 \hline
 Trivial & $\left<e\right>$ & GTR & Yes\\[4pt]
 $S_2$ & $\left<(12)\right>$ & TIM3 & Yes\\[4pt]
 $S_2$ & $\left<(12)(34)\right>$ & TIM & No\\[4pt]
 $C_4$ & $\left<(1234)\right>$ & $\text{M}_{12}$ & No\\[4pt]
 $V_4$ & $\left<(12)(34),(13)(24),(14)(23)\right>$ & K81u (K3STu) & No\\[4pt]
 $V_4$ & $\left<(12),(34),(12)(34)\right>$ & TN93 & Yes\\[4pt]
 $D_4$ & $\left<(1324),(12)\right>$ & HKY & No\\[4pt]
 $A_3$ & $\left<(132),(123)\right>$ & $\text{M}_{24}$ & Yes\\[4pt]
 $S_3$ & $\left<(12),(123)\right>$ & $\text{M}_{24}$ & Yes\\[4pt]
 $A_4$ & $\left<(123),(12)(34)\right>$ & F81 (EI) & Yes\\[4pt]
 $S_4$ & $\left<(1234),(12)\right>$ & F81 (EI) & Yes \\
 \hline
\end{tabular}
\caption{The $G$-equivariant time-reversible models for $n=4$}
\end{table}

Although the equivariant Markov models (as defined in Example \ref{exam:jordanUniformization}) are uniformization stable, we have shown that the models captured by extending this idea to time-reversible models are not uniformization stable in general. It remains a topic for future work to describe all uniformization stable $G$-equivariant time-reversible models for arbitrary $n$.
In the next section we give a further characterisation of uniformization stable Markov models beyond the time-reversible case considered here.
\section{\texorpdfstring{Jordan-Markov models with $S_n$ symmetry}{Jordan-Markov models with Sn symmetry}}
\label{sec:symmetry}

Without further constraints, such as the time-reversibility conditions discussed in the previous section, generating a complete (finite or even discrete) list of all Jordan-Markov models is infeasible. 
However, as was done for the Lie-Markov models in \cite{sumnerLie}, a natural family of constraints comes from enforcing a symmetry by demanding invariance under a group of state permutations.
In this section, we make this notion precise, review the relevant aspects of representation theory we require, and conclude with a characterisation of Jordan-Markov models with `full' $S_n$ permutation symmetry.

Before proceeding further we note that this section relies heavily on results from representation theory, particularly the representation theory of the symmetric group, which is well introduced in \cite{sagan}.

\begin{definition}
\label{def:groupAction}
Given a permutation subgroup $G \leq S_n$, recall that $\text{Mat}_n(\mathbb{R})$ carries an action of $G$ given by simultaneous row and column permutations. 
That is, for all $\sigma\in G$ and $X\in \text{Mat}_n(\mathbb{R})$, we take,
\[
\sigma \cdot X := K_{\sigma}^T X K_{\sigma},
\]
where $K_{\sigma}$ is the standard $n \times n$ permutation matrix associated to the permutation $\sigma$, so $K_{\sigma}^T=K_{\sigma}^{-1}=K_{\sigma^{-1}}$. 
We refer to this as the \emph{conjugation action} of $S_n$ on $\text{Mat}_n(\mathbb{R})$.

More generally, suppose $U\subseteq \text{Mat}_n(\mathbb{R})$ is a linear subspace of matrices, then $U$ forms a $G$-\emph{module} if $U$ is closed under the action of $G$ induced from $\text{Mat}_n(\mathbb{R})$. 
Finally, $U$ is \emph{irreducible} if $U$ does not itself contain any non-trivial $G$-submodules.
%\demo
\end{definition}
For instance, $\zerocolsum_n$ forms an $S_n$-module since the zero row sums condition is preserved under the action of $S_n$.
This motivates:
\begin{definition}
\label{def:modelSnSymm}
We say a linear Markov model $\zerocolsum^+ := \zerocolsum \cap \zerocolsum_n^+$, with $\zerocolsum$ minimal, has \emph{full permutation symmetry}, or simply $S_n$\emph{-symmetry}, if $\zerocolsum$ is invariant under the action of $S_n$ given in Definition~\ref{def:groupAction}; that is, if $\zerocolsum$ forms an $S_n$-module.
\end{definition}
For example, the general time-reversible model discussed in the previous section has full permutation symmetry, whereas the TN model does not.

The results in the remainder of this chapter rely on the well-known Maschke's theorem. We first recall that the $G$-isomorphic classes of irreducible G-submodules are in bijective correspondence with the conjugacy classes of $G$. In particular, the irreducible submodules of $S_n$ are labelled by the integer partitions of $n$. 
Additionally,
\begin{theorem} (Maschke's Theorem)
\label{thm:maschkesTheorem}
Suppose $G$ is a finite group and $V$ is a $G$-module.
There exist irreducible $G$-submodules $W^{(i)}$ of $V$ such that,
$$V=W^{(1)} \oplus W^{(2)} \oplus ... \oplus W^{(k)} $$
where $k\in \mathbb{N}$ and $k\leq \dim(V)$.

\end{theorem}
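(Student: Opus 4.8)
The plan is to prove the statement by induction on $\dim V$, the crucial ingredient being that every $G$-submodule of $V$ admits a $G$-invariant complement. The base case is immediate: if $V$ is itself irreducible (in particular if $\dim V \le 1$), then $V = W^{(1)}$ with $W^{(1)} = V$, and trivially $k = 1 \le \dim V$.

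For the inductive step, suppose $V$ is not irreducible, so there is a non-trivial proper $G$-submodule $W \subsetneq V$. I would produce a $G$-submodule $W'$ with $V = W \oplus W'$ as follows. Choose any linear projection $p \colon V \to V$ with image $W$ (fix a basis of $W$, extend it to a basis of $V$, and let $p$ annihilate the added basis vectors). This $p$ need not commute with the $G$-action, so average it: set
\[
\bar p := \frac{1}{|G|}\sum_{\sigma \in G} \sigma \cdot p \cdot \sigma^{-1},
\]
where $\sigma$ acts on $V$ via the given module structure; finiteness of $G$ and working over $\RR$ (characteristic zero) make the division by $|G|$ legitimate. One then checks the routine facts: each summand $\sigma \cdot p \cdot \sigma^{-1}$ has image in $W$ since $W$ is $G$-stable, so $\operatorname{im} \bar p \subseteq W$; $\bar p$ restricts to the identity on $W$ (again using $G$-stability of $W$ together with $p|_W = \mathrm{id}$), so $\bar p$ is a projection onto $W$; and $\tau \cdot \bar p \cdot \tau^{-1} = \bar p$ for all $\tau \in G$ by reindexing the sum, so $\bar p$ is $G$-equivariant. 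Hence $W' := \ker \bar p$ is a $G$-submodule with $V = W \oplus W'$.

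Applying the inductive hypothesis to $W$ and to $W'$, each of strictly smaller dimension, decomposes them into direct sums of irreducible $G$-submodules; concatenating these expresses $V$ as a direct sum $W^{(1)} \oplus \cdots \oplus W^{(k)}$ of irreducibles. Since each $W^{(i)}$ is non-zero, $k \le \sum_i \dim W^{(i)} = \dim V$, giving the stated bound.

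I do not expect a serious obstacle: the only delicate point is checking that $\bar p$ is genuinely a $G$-equivariant projection \emph{onto $W$} rather than onto some smaller subspace, and this is precisely where the hypothesis that $W$ is a $G$-submodule (not merely a subspace) is used. As a remark, in the concrete matrix setting of this paper one can bypass the averaging of projections entirely: the Frobenius form $\langle X, Y \rangle = \Tr(X^T Y)$ on $\mat_n(\RR)$ is already invariant under the conjugation action because each $K_\sigma$ is orthogonal, so for a submodule $W \subseteq V$ one may simply take $W' = W^{\perp}$ (orthogonal complement inside $V$), which is automatically a $G$-submodule, and run the same induction.
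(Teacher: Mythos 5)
Your proof is correct and complete: the averaging of an arbitrary linear projection over the finite group $G$ (legitimate over $\RR$, characteristic zero) yields a $G$-equivariant projection onto the submodule $W$, its kernel is an invariant complement, and induction on dimension plus the bound $k\leq\sum_i\dim W^{(i)}=\dim V$ finishes the argument. The paper itself offers no proof, citing Sagan [Thm 1.5.3] instead, so there is nothing to diverge from; your argument is the standard complete-reducibility proof one would find there. Your closing remark is also apt: in the concrete setting of this paper the invariant-inner-product route (take $W^{\perp}$ inside $V$) is exactly the mechanism the authors themselves exploit later, where they introduce the $S_n$-invariant inner product $\langle Q,Q'\rangle=\Tr(Q_z^TQ'_z)$ on $\zerocolsum_n$ and use the fact that orthogonal complements of submodules are submodules, so the two approaches are interchangeable here and your version is, if anything, the more self-contained one.
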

\begin{proof}
See \cite[Thm 1.5.3]{sagan}.
\end{proof}

For convenience, up to isomorphism we notate the irreducible modules of $S_n$ by using curly brackets containing the relevant integer partition.
In the present context, the simplest example follows by taking the matrix $J$, hereby defined as the rate matrix with off-diagonal entries $\frac{1}{n}$, and observing $K_{\sigma}^T JK_{\sigma}=J$. Hence $\text{span}_{\RR}(J)$ forms the trivial $S_n$-module, which is naturally labelled by the trivial integer partition:
\[
\text{span}_{\RR}(J)\cong \{n\}.
\]

For our present purposes, we find:
\begin{theorem}
\label{thm:Lndim}
For $n>3$, 
\begin{align*}
\zerocolsum_n&\cong \left\{n\right\}\oplus 2\left\{n-1,1\right\}\oplus \left\{n-2,2\right\}\oplus \left\{n-2,1^2\right\},
 \end{align*}
together with
$
 \zerocolsum_3 \cong \left\{3\right\}\oplus 2\left\{2,1\right\}\oplus  \left\{1^3\right\}$
 and
 $
 \zerocolsum_2\cong \left\{2\right\}\oplus \left\{1^2\right\}.
$
\end{theorem}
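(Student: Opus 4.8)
The plan is to realise $\zerocolsum_n$ as a direct summand of a tensor square of the natural permutation module and then read off the decomposition from Maschke's Theorem (Theorem~\ref{thm:maschkesTheorem}) together with standard facts about $S_n$-representations \cite{sagan}. Let $M\cong\RR^n$ denote the permutation module, with standard basis permuted by $S_n$. First I would record the $S_n$-module isomorphism $\mat_n(\RR)\cong M\otimes M$ for the conjugation action of Definition~\ref{def:groupAction}: this is the familiar identification $\mathrm{End}(\RR^n)\cong\RR^n\otimes(\RR^n)^{\ast}$ as a conjugation module, together with the self-duality $M\cong M^{\ast}$ of a permutation module (permutation matrices being orthogonal). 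Next I would observe that the row-sum map $Q\mapsto Q\oneVec$ is $S_n$-equivariant from $\mat_n(\RR)$ to $M$, since $K_\sigma\oneVec=\oneVec$ forces $(K_\sigma^TQK_\sigma)\oneVec=K_\sigma^T(Q\oneVec)$; it is surjective, has kernel exactly $\zerocolsum_n$ by Definition~\ref{def:zeroRowSum}, and admits the $S_n$-equivariant section $v\mapsto\text{diag}(v)$. Hence $\mat_n(\RR)\cong\zerocolsum_n\oplus M$ as $S_n$-modules (the summand $M$ being the diagonal matrices), and it suffices to decompose $M\otimes M$ and delete one copy of $M$.

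For this I would use $M\cong\{n\}\oplus\{n-1,1\}$ (trivial plus standard) and expand $M\otimes M$ bilinearly, using $\{n\}\otimes\{n\}\cong\{n\}$ and $\{n\}\otimes\{n-1,1\}\cong\{n-1,1\}$; the only nontrivial ingredient is the decomposition $\{n-1,1\}^{\otimes 2}\cong\mathrm{Sym}^2\{n-1,1\}\oplus\wedge^2\{n-1,1\}$. For $n\ge 4$ the classical identities $\wedge^2\{n-1,1\}\cong\{n-2,1^2\}$ and $\mathrm{Sym}^2\{n-1,1\}\cong\{n\}\oplus\{n-1,1\}\oplus\{n-2,2\}$ give $M\otimes M\cong 2\{n\}\oplus 3\{n-1,1\}\oplus\{n-2,2\}\oplus\{n-2,1^2\}$, and removing the copy of $M\cong\{n\}\oplus\{n-1,1\}$ coming from the row-sum map yields the stated decomposition of $\zerocolsum_n$ (a dimension count $1+2(n-1)+\tfrac{n(n-3)}{2}+\tfrac{(n-1)(n-2)}{2}=n^2-n$ serves as a sanity check). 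If one prefers not to cite these symmetric/exterior-square formulas, they follow from a one-line character computation valid for all $n\ge 2$: writing $f(\sigma)$ for the number of fixed points of $\sigma$, the conjugation character of $\mat_n(\RR)$ at $\sigma$ equals $f(\sigma)^2$, so the character of $\zerocolsum_n$ is $f(\sigma)^2-f(\sigma)$, and pairing this class function with the irreducible $S_n$-characters returns the claimed multiplicities.

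The remaining work, and the only point requiring care, is the case analysis for small $n$: for $n=2,3$ the symbols $\{n-2,2\}$ (when $n\le 3$) and $\{n-2,1^2\}$ (when $n=2$) are not partitions of $n$, and the squares of the standard module degenerate accordingly --- for $n=3$ one has $\wedge^2\{2,1\}\cong\{1^3\}$ and $\mathrm{Sym}^2\{2,1\}\cong\{3\}\oplus\{2,1\}$, while for $n=2$ the standard module is the one-dimensional sign representation $\{1^2\}$ with $\{1^2\}^{\otimes 2}\cong\{2\}$. Running the same subtraction of one copy of $M$ in each case produces $\zerocolsum_3\cong\{3\}\oplus 2\{2,1\}\oplus\{1^3\}$ and $\zerocolsum_2\cong\{2\}\oplus\{1^2\}$, completing the proof. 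Thus the structural content is just the single isomorphism $\mat_n(\RR)\cong\zerocolsum_n\oplus M$ together with well-known $S_n$-character data, and the main obstacle is purely the bookkeeping around these low-dimensional coincidences.
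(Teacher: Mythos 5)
Your proposal is correct and follows essentially the same route as the paper: both split $\mat_n(\RR)\cong \zerocolsum_n\oplus \RR^n$ as $S_n$-modules (your row-sum map with diagonal section is just a repackaging of the paper's diagonal/off-diagonal observation), identify $\mat_n(\RR)$ with the tensor square of the permutation module, decompose $\{n-1,1\}\otimes\{n-1,1\}$ by standard character data, and subtract one copy of $\RR^n$. The only differences are cosmetic --- you invoke the classical $\mathrm{Sym}^2/\wedge^2$ identities (or a pairing with $f(\sigma)^2-f(\sigma)$) where the paper uses the Frobenius character formula, and you write out the $n=2,3$ degenerations that the paper leaves as an exercise.
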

\begin{proof}
We begin by noting that the following argument is complete but does require some specialist knowledge of the representation theory of the symmetric group.

After presently establishing the general $n\!>\!3$ case, the $n\!\leq 3\!$ cases are easily verified as an exercise (or by simply noting the invalid integer partitions are removed).

Observing that $S_n$ acts separately on the off-diagonal and diagonal elements of matrices in $\mat_n(\RR)$. 
That is, if $X=(x_{ij})\in \mat_n(\RR)$ and $\sigma\in S_n$, we have $K^T_\sigma X K_\sigma=(x_{\sigma^{-1}(i)\sigma^{-1}(j)})$ so the off-diagonal elements of $X$ map as $x_{ij}\mapsto x_{\sigma^{-1}(i)\sigma^{-1}(j)}$
 with $i\neq j\implies \sigma^{-1}(i)\neq \sigma^{-1}(j)$, and the diagonal elements map as $x_{ii}\mapsto x_{\sigma^{-1}(i)\sigma^{-1}(i)}$ with $\sigma^{-1}(i)=\sigma^{-1}(i)$.
 This means the diagonal entries of matrices in $\mat_n(\RR)$ constitute a copy of the usual $S_n$ action on $\RR^n$ under the coordinate vector identification $x_i\equiv x_{ii}$.
 Additionally, the action of $S_n$ on the elementary rate matrices $L_{ij}$ with $i\neq j$ (Definition~\ref{def:elementaryRateMatrix}) is given by $L_{ij}\mapsto L_{\sigma(i)\sigma(j)}=K^T_\sigma L_{ij}K_\sigma$ and it is hence clear that this is isomorphic to the action of $S_n$ on the off-diagonal entries of matrices in $\mat_n(\RR)$.
 We conclude that we have the following isomorphism of $S_n$ modules: $\mat_n(\RR)\cong\zerocolsum_n\oplus \RR^n$. 

We now note that as an $S_n$-module we clearly have $\mat_n(\RR)\cong \RR^n\otimes \RR^n$, and we recall the well-known decomposition into irreducible modules $\RR^n\cong\{n\}\oplus \{n-1,1\}$, where $\{n\}$ is spanned by the single vector $e_1+e_2+\ldots+e_n$ and $\{n-1,1\}$ is spanned by the vectors $e_1-e_2,e_1-e_2,\ldots , e_1-e_n$ (see \cite{sagan} Examples 2.36 and 2.38).
Since $\{n\}$ corresponds to the trivial representation of $S_n$, it hence behaves as the identity under tensor products and we can apply distributivity to obtain,
\[
\mat_n(\RR)\cong \left(\{n\}\oplus  \{n-1,1\}\right)\otimes \left(\{n\}\oplus  \{n-1,1\}\right)=\{n\}\oplus 2\{n-1,1\} \oplus \left(\{n-1,1\}\otimes \{n-1,1\}\right).
\]
Now suppose $\sigma\in S_n$ has $r_k$ cycles of length $k$, an application of the Frobenius character (trace) formula (see \cite{sagan} Theorem~4.22) yields the formulae,
\begin{align*}
   \chi_{\{n\}}(\sigma)=1,\quad \chi_{\{n-1,1\}}(\sigma)=r_1\!-\!1,\quad
   \chi_{\{n-2,2\}}(\sigma)=\fra{1}{2}r_1(r_1\!-\!3)\!+\!r_2,\quad
   \chi_{\{n-2,1^2\}}(\sigma)=\fra{1}{2}(r_1\!-\!1)(r_1\!-\!2)\!-\!r_2.
\end{align*}
Comparing these to 
\[
\chi_{\{n-1,1\}\otimes \{n-1,1\}}(\sigma)=\chi_{\{n-1,1\}}(\sigma)^2=(r_1-1)^2,
\]
the uniqueness of the decomposition into irreducible components leads to
\[
\{n-1,1\}\otimes \{n-1,1\} = \{n\}\oplus \{n-1,1\}\oplus \{n-2,2\}\oplus \{n-2,1^2\}.
\]
Comparing to $\mat_n(\RR)\cong \zerocolsum_n\oplus \RR^n$ yields the stated result.

\end{proof}

We corroborate this result by noting the so-called `hook length' formula \cite[Chap 3.10]{sagan} yields the dimensions $\dim\left(\{n\}\right)=1$, $\dim\left(\{n-1,1\}\right)=n-1$, $\dim\left(\{n-2,2\}\right)=\fra{1}{2}n(n-3)$, $\dim\left(\{n-2,1^2\}\right) = \fra{1}{2}(n-1)(n-2)$ and hence:
\[
n(n-1)=\dim(\zerocolsum_n)=1+2(n-1)+\fra{1}{2}n(n-3)+
\fra{1}{2}(n-1)(n-2),
\]
as required.

The approach we now follow mimics what was presented in \cite{sumnerLie} and is motivated by the following observation: if $\zerocolsum\subseteq \zerocolsum_n$ is an $S_n$-module
then, following Theorem~\ref{thm:Lndim}, there must exist $a_1,a_3,a_4\in \{0,1\}$ and $a_2\in\{0,1,2\}$ such that
\[
\zerocolsum=a_1\left\{n\right\}\oplus a_2\left\{n-1,1\right\}\oplus a_3\left\{n-2,2\right\}\oplus a_4\left\{n-2,1^2\right\}.
\]
This puts a very strong constraint on the Jordan-Markov models with $S_n$ symmetry and enables us to identify all the possibilities for general $n$.

In fact, we can already say a little more at this point. The following result tells us that any Markov model with full $S_n$ permutation symmetry is either trivial (that is, contains no non-zero rate matrices) or includes the rate matrix $J$.

\begin{lemma}
\label{lem:trivialMarkovModel}
Suppose $ \zerocolsum\subseteq \zerocolsum_n$ is a non-zero $S_n$-module. 
Then $\zerocolsum^+$ is non-trivial in the sense that there exists $ Q\in \zerocolsum^+$ such that $Q\neq 0$ if and only if $\{n\}\cong\text{span}_{\RR}\left(J\right)\subseteq \zerocolsum$.

\end{lemma}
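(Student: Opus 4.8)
The plan is to prove both directions, with the non-trivial content being the forward implication that a non-zero rate matrix in the model forces $\{n\}\subseteq \zerocolsum$. The reverse direction is immediate: if $\{n\}\cong\spam_\RR(J)\subseteq\zerocolsum$, then since $J=H-I_n=\frac1n\oneVec\oneVec^T-I_n$ has all off-diagonal entries equal to $\frac1n>0$, it is a (non-zero) rate matrix, so $J\in\zerocolsum\cap\zerocolsum_n^+=\zerocolsum^+$ and the model is non-trivial.

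For the forward direction, suppose $\zerocolsum^+$ contains some rate matrix $Q\neq 0$. The idea is to average $Q$ over the group to land inside the trivial submodule. Since $\zerocolsum$ is an $S_n$-module and $\zerocolsum_n^+$ is invariant under the conjugation action (simultaneous row/column permutation preserves both the zero row sum condition and off-diagonal non-negativity), each $\sigma\cdot Q=K_\sigma^T Q K_\sigma$ again lies in $\zerocolsum^+$. Now form the group average
\[
\bar Q:=\frac{1}{|S_n|}\sum_{\sigma\in S_n}K_\sigma^T Q K_\sigma.
\]
This $\bar Q$ is a conical (indeed convex) combination of rate matrices from the model, so by Fact following Lemma~\ref{lem:ifSubspaceMinimalThenEqualToSpan} (a linear Markov model is closed under linear combinations of its rate matrices that are again rate matrices) we have $\bar Q\in\zerocolsum^+$; alternatively, $\bar Q\in\zerocolsum$ because $\zerocolsum$ is a linear subspace closed under the $G$-action, and $\bar Q$ is a rate matrix because $\zerocolsum_n^+$ is a convex cone (Fact~\ref{fact:ratematricesconical}). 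By construction $\bar Q$ is fixed by the conjugation action of every $\sigma\in S_n$, hence lies in the isotypic component of the trivial representation inside $\zerocolsum_n$, which by Theorem~\ref{thm:Lndim} is exactly $\spam_\RR(J)$. So $\bar Q=cJ$ for some $c\in\RR$.

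The crux is then to show $c\neq 0$, so that $J\in\zerocolsum$ and therefore $\{n\}\cong\spam_\RR(J)\subseteq\zerocolsum$. This is where one uses that $Q$ is a rate matrix and $Q\neq 0$: its off-diagonal entries are non-negative and not all zero, so $\sum_{i\neq j}q_{ij}>0$. Conjugation permutes the off-diagonal positions among themselves, so the off-diagonal entry of $\bar Q$ in position $(k,l)$, $k\neq l$, equals $\frac{1}{n(n-1)}\sum_{i\neq j}q_{ij}>0$; in particular every off-diagonal entry of $\bar Q$ is a fixed positive number, matching the form $cJ$ with $c=\frac{1}{n(n-1)}\sum_{i\neq j}q_{ij}>0$. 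Hence $c\neq 0$, giving $J\in\zerocolsum$ as desired. The main obstacle to watch is simply making the averaging argument airtight — verifying that $\bar Q$ genuinely stays in the model (either via the linear-closure fact or directly via convexity of $\zerocolsum_n^+$ together with $S_n$-invariance of $\zerocolsum$), and correctly identifying the trivial isotypic component of $\zerocolsum_n$ with $\spam_\RR(J)$ using the decomposition in Theorem~\ref{thm:Lndim}; both are routine once the correct invocations are lined up, and no delicate estimate is needed beyond the observation that a non-zero non-negative sum is positive.
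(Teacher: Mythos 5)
Your proof is correct and follows essentially the same route as the paper: average (the paper sums, unnormalized) the non-zero rate matrix $Q$ over the conjugation action of $S_n$, note the result stays in $\zerocolsum^+$ and is $S_n$-invariant, and invoke the multiplicity-one occurrence of the trivial module $\{n\}\cong\spam_\RR(J)$ in Theorem~\ref{thm:Lndim} to conclude it equals $\lambda J$ with $\lambda>0$. Your explicit computation of the common off-diagonal entry of $\bar Q$ just makes the positivity of $\lambda$ more explicit than the paper's one-line assertion.
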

\begin{proof}
If $\text{span}_{\RR}\left(J\right)\subseteq \zerocolsum$, then $0\neq J \in \zerocolsum^+$. 
Conversely, taking $0\neq Q\in \zerocolsum^+$, consider
\[
\widehat{Q}:=\sum_{\sigma\in S_n}K_\sigma^T QK_{\sigma}\in \zerocolsum.
\]
Now, as each $0\neq K_\sigma^T QK_{\sigma}\in \zerocolsum^+$, it follows that $0\neq \widehat{Q}\in \zerocolsum^+$.
Observing that $K_\sigma^T \widehat{Q}K_{\sigma}=\widehat{Q}$ for each $\sigma\in S_n$ and applying the uniqueness of the trivial $S_n$-module $\{n\}$ in the decomposition of $\mathcal{L}_n$ (Theorem~\ref{thm:Lndim}) tells us that $\widehat{Q}=\lambda J$ for some $\lambda >0$ and completes the proof.
\end{proof}

For historical reasons the model spanned by $J$ is sometimes referred to as the $n$-state `Jukes-Cantor' model \cite{jukes}. Here we will refer to it as the constant input model, denoted $\text{CI}_n$, which is consistent with \cite{sumnerEmbed}. To further explore the possibilities, we continue by considering the `equal-input' model, discussed in \cite[Chap 7.3.1]{steel}, as an $S_n$-symmetric submodel of $\zerocolsum_n$:
\[
\text{EI}_n=\spam_\RR\left(R_i:i\in [n]\right),
\]
where $R_i\in \zerocolsum_n^+$ is the rate matrix with $1$'s on the off-diagonal entries of the $i^{th}$ column and 0 on every other off-diagonal entry.
An easy calculation shows that $K_{\sigma}^T R_iK_{\sigma}=R_{\sigma(i)}$ and hence, as $S_n$-modules,
\[
\text{EI}_n\cong\mathbb{R}^n\cong \left\{n\right\}\oplus \left\{n-1,1\right\}.
\]

In what is to follow, the two-fold multiplicity of $\{n-1,1\}$ appearing in the decomposition of $\zerocolsum_n$ creates some complications, so, similarly defining the matrix $C_i$ as the rate matrix with $1$'s on the off-diagonal of the $i^{th}$ row and $0$'s on the remaining off-diagonal entries, we note here that $K_{\sigma}^T C_iK_{\sigma}=C_{\sigma(i)}$ so,
for all (fixed) choices $\mu,\nu\in \mathbb{R}$ not both equal to zero:
\[
\spam_\RR\left(\mu R_i+\nu C_i:i\in [n]\right)\cong \{n\}\oplus \{n-1,1\}.
\]
Additionally, $nJ=\sum_{i\in [n]}R_i=\sum_{i\in [n]}C_i$ tells us that
\[
\spam_\RR\left(R_i,C_j:i,j\in [n]\right)\cong \{n\}\oplus 2\{n-1,1\}.
\]

Further, we recall that a `doubly stochastic' rate matrix has both column and row sums equal to 0 and the Birkhoff–von Neumann Theorem (see \cite[Thm 2.1.6]{bapat}), which states that any doubly stochastic rate matrix is a conical combination of the matrices $L_\sigma:=K_\sigma-I_n$.
This motivates
\[
\text{DS}_n:=\left\{Q\in \zerocolsum_n:Q\oneVec =0=\oneVec ^T Q\right\}=
\spam_\RR\left(L_\sigma:\sigma\in S_n\right),
\]
which is an $S_n$-module with $\dim(\text{DS}_n)=(n-1)^2$.
A short calculation shows that $\text{EI}_n\cap \text{DS}_n=\spam_\RR\left(J\right)\cong\{n\}$ and $\mu R_i+\nu C_i\in \text{DS}_n$ if and only if $\mu=\nu$. 
Thus, comparing to Theorem~\ref{thm:Lndim}, we see that: 
\[
\text{DS}_n\cong \left\{n\right\}\oplus \left\{n-1,1\right\}\oplus \left\{n-2,2\right\}\oplus \left\{n-2,1^2\right\}.
\]

We observe that the transpose of a rate matrix is a rate matrix if and only if it is doubly stochastic, and, since the transpose operation commutes with the $S_n$ action, we can decompose $\text{DS}_n$ into symmetric and anti-symmetric parts: $\text{DS}_n=\text{Symm}_n\oplus \text{Anti}_n$.
Since $\dim(\text{DS}_n)=(n-1)^2$, it follows that $\dim(\text{Anti}_n)=\binom{n-1}{2}$, so comparing to the dimension formulae above yields:
\[
\text{Symm}_n=\spam_\RR\left(L_\sigma+L_{\sigma^{-1}}:\sigma\in S_n\right)\cong \left\{n\right\}\oplus \left\{n-1,1\right\}\oplus \left\{n-2,2\right\}
\]
and
\[
\text{Anti}_n=\spam_\RR\left(L_\sigma-L_{\sigma^{-1}}:\sigma\in S_n\right)\cong \left\{n-2,1^2\right\},
\]
with, in particular,
\[
\text{Symm}_n\supseteq\spam_\RR\left(R_i+C_i:i\in [n]\right)\cong \{n\}\oplus \{n-1,1\}.
\]

We record the following algebraic properties of the submodules identified thus far.
\begin{lemma}
\label{lem:submoduleslist}
\begin{enumerate}
\item[]
    \item $\text{EI}_n\cong \{n\}\oplus\{n-1,1\}$ is a matrix algebra.
    \item $\text{DS}_n\cong \{n\}\oplus\{n-1,1\}\oplus\{n-2,2\}\oplus\{n-2,1^2\}$ is a matrix algebra.
    \item $\text{Symm}_n\cong \{n\}\oplus\{n-1,1\}\oplus\{n-2,2\}$ is a Jordan algebra, but not a Lie algebra.
     \item $\text{Anti}_n\cong \{n-2,1^2\}$ is a Lie algebra, but not a Jordan algebra.
    \item The subspace sum $\text{EI}_n+\text{Symm}_n\cong \{n\}\oplus 2\{n-1,1\}\oplus\{n-2,2\}$ is a Jordan algebra but not a Lie algebra.
   
\end{enumerate}
\end{lemma}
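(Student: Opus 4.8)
The plan is to handle the five submodules in the order listed, since the matrix-algebra claims for $\text{EI}_n$ and $\text{DS}_n$ feed into the later items, and all five isomorphism types have already been recorded in the preceding discussion (for $n>3$; the small cases are routine), so only the algebraic closure properties need checking. For (1) I would use the closed form $\text{EI}_n=\{\oneVec v^{T}-(\oneVec^{T}v)I_n:v\in\RR^{n}\}$, which follows at once from $R_i=\oneVec e_i^{T}-I_n$, and verify by a direct computation that $\big(\oneVec v^{T}-(\oneVec^{T}v)I_n\big)\big(\oneVec w^{T}-(\oneVec^{T}w)I_n\big)=-(\oneVec^{T}w)\big(\oneVec v^{T}-(\oneVec^{T}v)I_n\big)\in\text{EI}_n$; hence $\text{EI}_n$ is a matrix algebra. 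For (2) I would use the description $\text{DS}_n=\{Q\in\zerocolsum_n:Q\oneVec=0=\oneVec^{T}Q\}$: if $Q_1,Q_2$ each satisfy both annihilation conditions then so does $Q_1Q_2$ by associativity, and the conditions are linear, so $\text{DS}_n$ is a matrix algebra (hence also a Jordan and a Lie algebra, by Lemma~\ref{lem:matrixIffJordanAndLie}).

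For (3) and (4) the idea is to combine (2) with how the transpose interacts with the two products. Writing $\text{Symm}_n=\{Q\in\text{DS}_n:Q^{T}=Q\}$ and $\text{Anti}_n=\{Q\in\text{DS}_n:Q^{T}=-Q\}$, the identities $(AB+BA)^{T}=B^{T}A^{T}+A^{T}B^{T}$ and $(AB-BA)^{T}=B^{T}A^{T}-A^{T}B^{T}$ show that $A\odot B$ is symmetric when $A,B$ are and $[A,B]$ is antisymmetric when $A,B$ are; since $\text{DS}_n$ is closed under both products by (2), it follows that $\text{Symm}_n$ is a Jordan algebra and $\text{Anti}_n$ is a Lie algebra. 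For the negative assertions (taking $n\ge 3$): the Lie bracket of two symmetric matrices is antisymmetric, so it lies in $\text{Symm}_n$ only if it is zero, and the symmetric doubly-stochastic rate matrices $L_{(12)}=K_{(12)}-I_n$ and $L_{(23)}=K_{(23)}-I_n$ do not commute (the matrices $K_{(12)},K_{(23)}$ represent distinct permutations in the two orders), so $[L_{(12)},L_{(23)}]\notin\text{Symm}_n$; dually, for any $0\neq A\in\text{Anti}_n$ (for instance $A=K_\sigma-K_{\sigma^{-1}}$ with $\sigma$ a $3$-cycle) the matrix $A\odot A=2A^{2}$ is symmetric and nonzero — if $A^{2}=0$ then $\|Av\|^{2}=-v^{T}A^{2}v=0$ for all $v$, forcing $A=0$ — so $A\odot A\notin\text{Anti}_n$.

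The substance of the lemma, and the step I expect to be the main obstacle, is item (5). By bilinearity of $\odot$ and items (1) and (3), closure of $\text{EI}_n+\text{Symm}_n$ under $\odot$ reduces to showing that the cross product $a\odot s=as+sa$ lies in $\text{EI}_n+\text{Symm}_n$ for every $a\in\text{EI}_n$ and $s\in\text{Symm}_n$. Writing $a=\oneVec v^{T}-(\oneVec^{T}v)I_n$ and using $s\oneVec=0=\oneVec^{T}s$ and $s^{T}=s$, one computes $sa=-(\oneVec^{T}v)\,s\in\text{Symm}_n$ and $as=\oneVec(sv)^{T}-(\oneVec^{T}v)\,s$, where the crucial observation is that $\oneVec(sv)^{T}\in\text{EI}_n$ because $\oneVec^{T}(sv)=(\oneVec^{T}s)v=0$. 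Hence $a\odot s\in\text{EI}_n+\text{Symm}_n$, and so $\text{EI}_n+\text{Symm}_n$ is a Jordan algebra.

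Finally, for the isomorphism type and the failure of the Lie property in (5): since $\text{Symm}_n\subseteq\text{DS}_n$ and $\text{EI}_n\cap\text{DS}_n=\spam_{\RR}(J)$ (recorded above), the copy of $\{n-1,1\}$ in $\text{EI}_n$ and the copy in $\text{Symm}_n$ are distinct, so $\text{EI}_n+\text{Symm}_n$ contains both, together with the shared $\{n\}\cong\spam_{\RR}(J)$ and the $\{n-2,2\}$ of $\text{Symm}_n$; comparing with Theorem~\ref{thm:Lndim} (which allows $\{n-1,1\}$ multiplicity at most $2$ and $\{n-2,2\},\{n-2,1^{2}\}$ multiplicity at most $1$) forces $\text{EI}_n+\text{Symm}_n\cong\{n\}\oplus 2\{n-1,1\}\oplus\{n-2,2\}$, which in particular has no $\{n-2,1^{2}\}$ constituent, so $(\text{EI}_n+\text{Symm}_n)\cap\text{Anti}_n=0$. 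Since $[L_{(12)},L_{(23)}]$ is a nonzero element of $\text{Anti}_n$ (antisymmetric, and doubly stochastic by (2)), it is not in $\text{EI}_n+\text{Symm}_n$; thus $\text{EI}_n+\text{Symm}_n$ is not closed under the Lie bracket and, by Lemma~\ref{lem:matrixIffJordanAndLie}, is not a Lie algebra.
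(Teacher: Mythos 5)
Your proposal is correct and follows essentially the paper's own route: the paper verifies the same closure properties on spanning elements via $R_iR_j=-R_i$, the transpose behaviour of $\odot$ and $[\cdot,\cdot]$ for items (3)--(4), and the cross relations $L_{\sigma}R_i=-L_\sigma$, $R_iL_{\sigma}=R_{\sigma(i)}-R_i-L_\sigma$ for item (5), of which your rank-one computations with $\oneVec v^{T}-(\oneVec^{T}v)I_n$ and the identities $sa=-(\oneVec^{T}v)s$, $as=\oneVec(sv)^{T}-(\oneVec^{T}v)s$ are the coordinate-free equivalents. The only genuine additions on your side are the explicit nonzero witnesses for the negative assertions (the commutator $[L_{(12)},L_{(23)}]$ and the square of a nonzero antisymmetric matrix) and the intersection argument with $\text{Anti}_n$, which the paper leaves implicit; these are welcome extra details rather than a different method.
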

\begin{proof}
\begin{enumerate}
\item[]
    \item The result follows from $R_iR_j=-R_i$ for all $i,j\in [n]$.
    \item Both zero row and zero column sums are preserved under matrix multiplication.
    \item The Jordan product of two symmetric matrices is again symmetric, whereas the Lie product of two symmetric matrices is anti-symmetric.
    \item The Lie product of two anti-symmetric matrices is again anti-symmetric, whereas the Jordan product of two anti-symmetric matrices is symmetric.
    \item Noting $\text{EI}_n+\text{Symm}_n=\spam_\RR\left(R_i,L_\sigma+L_{\sigma^{-1}}:i\in [n],\sigma\in S_n\right)$, it is straightforward to establish the result using (1) and (3) and by confirming the relations:
    \[
    L_{\sigma}R_i=-L_\sigma, \qquad R_iL_{\sigma}=R_{\sigma(i)}-R_i-L_\sigma.
    \]
\end{enumerate}
\end{proof}

Since $S_n$ acts by conjugation, we have:
\begin{fact}
\label{fact:typesubspace}
For each cycle type $\lambda$, the following are $S_n$-modules:
\[
\spam_\RR\left(L_\sigma+L_{\sigma^{-1}}:\sigma \in S_n,\text{ cycle type }\lambda\right)\subseteq \text{Symm}_n,
\] 
which is possibly reducible, and 
\[
\spam_\RR\left(L_\sigma-L_{\sigma^{-1}}:\sigma \in S_n,\text{ cycle type }\lambda\right)\subseteq \text{Anti}_n,
\]
which, by the irreducibility of $\text{Anti}_n$, is either zero or equality holds.
\end{fact}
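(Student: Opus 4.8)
The plan is to reduce everything to the single observation that conjugation by a permutation matrix sends each matrix $L_\sigma = K_\sigma - I_n$ to another matrix of the same form, indexed by a conjugate permutation. Concretely, for $\tau\in S_n$ one computes, using $K_\tau^T = K_\tau^{-1} = K_{\tau^{-1}}$ as in Definition~\ref{def:groupAction},
\[
\tau\cdot L_\sigma = K_\tau^T(K_\sigma-I_n)K_\tau = K_{\tau^{-1}}K_\sigma K_\tau - I_n = K_{\tau\sigma\tau^{-1}} - I_n = L_{\tau\sigma\tau^{-1}},
\]
the third equality being the routine identity for products of permutation matrices under the convention $e_iK_\sigma = e_{\sigma(i)}$. (With the opposite convention the conjugate reads $\tau^{-1}\sigma\tau$ instead; since conjugation preserves cycle type either way, nothing below changes.)

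Now fix a cycle type $\lambda$. Because conjugation preserves cycle type, and because inversion preserves cycle type with $(\tau\sigma\tau^{-1})^{-1} = \tau\sigma^{-1}\tau^{-1}$, for every $\sigma$ of cycle type $\lambda$ we get
\begin{align*}
\tau\cdot\bigl(L_\sigma + L_{\sigma^{-1}}\bigr) &= L_{\tau\sigma\tau^{-1}} + L_{(\tau\sigma\tau^{-1})^{-1}}, \\
\tau\cdot\bigl(L_\sigma - L_{\sigma^{-1}}\bigr) &= L_{\tau\sigma\tau^{-1}} - L_{(\tau\sigma\tau^{-1})^{-1}},
\end{align*}
with $\tau\sigma\tau^{-1}$ again of cycle type $\lambda$. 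Hence each of the two spanning sets is permuted within itself by $S_n$, so its span is an $S_n$-submodule; by construction the first lies in $\text{Symm}_n$ and the second in $\text{Anti}_n$. For the symmetric span nothing more is asserted: $\text{Symm}_n\cong\{n\}\oplus\{n-1,1\}\oplus\{n-2,2\}$ is reducible, so the type-$\lambda$ submodule may well be a proper non-zero submodule, which is precisely the ``possibly reducible'' remark and needs no proof. For the anti-symmetric span we invoke the earlier identification $\text{Anti}_n\cong\{n-2,1^2\}$, an irreducible $S_n$-module, whose only submodules are $\{0\}$ and $\text{Anti}_n$ itself; this yields the stated dichotomy. (Both alternatives genuinely occur: transpositions are self-inverse, so the anti-symmetric span for cycle type $(2,1^{n-2})$ is $\{0\}$, whereas for a $3$-cycle $\sigma$ one has $\sigma^{-1}\neq\sigma$, so that span is non-zero and therefore all of $\text{Anti}_n$.)

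There is essentially no real obstacle: the only point demanding any care is getting the permutation-matrix composition rule --- and hence the conjugation formula $\tau\cdot L_\sigma = L_{\tau\sigma\tau^{-1}}$ --- correct relative to the convention fixed in Definition~\ref{def:groupAction}; once that is settled, the proof is two one-line verifications plus a citation of the irreducibility of $\text{Anti}_n$. In writing this up I would give the conjugation computation first, then the closure-under-action argument for both spans at once, and close with the irreducibility dichotomy.
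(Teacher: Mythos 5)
Your argument is correct and is exactly the justification the paper intends: the paper states this as a Fact prefaced only by ``Since $S_n$ acts by conjugation,'' and your conjugation computation $\tau\cdot L_\sigma = L_{\tau\sigma\tau^{-1}}$, preservation of cycle type under conjugation and inversion, and the appeal to the irreducibility of $\text{Anti}_n$ fill in that one-line remark in the same way. No gaps; the convention caveat you flag is handled correctly and is immaterial, as you note.
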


At this point, with the exception of the $\{n-2,2\}$ case, we have explicitly identified matrices that span each of the submodules present in the decomposition of $\mathcal{L}_n$ (Theorem~\ref{thm:Lndim}). 
To rectify this situation, we define, for all $Q\in \zerocolsum_n$,
\[
Q_z:=Q\circ (\oneVec \oneVec^T -I_n),
\]
where `$\circ$' indicates the entrywise matrix product, so $Q_z$ is the obtained from $Q$ by setting all entries on the diagonal to zero.
Then:
\begin{lemma}
The sum of squares of off-diagonal entries, $\text{tr}(Q_z^TQ_z)$, gives an $S_n$-invariant inner product on $\zerocolsum_n$, with $\langle Q,Q' \rangle:=\text{tr}(Q_z^TQ'_z)$ for all $Q,Q'\in \zerocolsum_n$.
\end{lemma}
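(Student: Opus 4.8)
The plan is to verify, in turn, the three axioms of an inner product — bilinearity, symmetry, positive-definiteness — and then $S_n$-invariance, using throughout that the diagonal-killing operation $Q\mapsto Q_z=Q\circ(\oneVec\oneVec^T-I_n)$ is linear and commutes with the conjugation action.

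First I would dispatch the easy parts. Since $Q\mapsto Q_z$ is linear (entrywise multiplication by a fixed matrix) and $(A,B)\mapsto\text{tr}(A^TB)$ is bilinear, $\langle\cdot,\cdot\rangle$ is bilinear; symmetry is immediate from $\text{tr}(A^TB)=\text{tr}(B^TA)$. Writing $Q=(q_{ij})$, a one-line computation gives
\[
\langle Q,Q\rangle=\text{tr}(Q_z^TQ_z)=\sum_{i\neq j}q_{ij}^2\geq 0,
\]
so the form is at least positive semi-definite on all of $\mat_n(\RR)$.

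The only step with genuine content — and the point where the hypothesis $Q\in\zerocolsum_n$ is essential rather than cosmetic — is positive-definiteness. Suppose $Q\in\zerocolsum_n$ with $\langle Q,Q\rangle=0$. Then every off-diagonal entry $q_{ij}$ ($i\neq j$) vanishes, and the zero-row-sum condition $Q\oneVec=0$ then forces $q_{ii}=-\sum_{j\neq i}q_{ij}=0$ for every $i$ as well, so $Q=0$. (On $\mat_n(\RR)$ this conclusion fails — any nonzero diagonal matrix lies in the radical — which is precisely why one passes to $Q_z$.)

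Finally, for $S_n$-invariance I would observe that conjugation by a permutation matrix permutes rows and columns simultaneously, hence preserves the partition of entries into diagonal and off-diagonal; concretely $(\sigma\cdot Q)_z=K_\sigma^TQ_zK_\sigma=\sigma\cdot Q_z$. Then, using $K_\sigma^TK_\sigma=I_n$ together with the cyclicity of the trace,
\[
\langle\sigma\cdot Q,\sigma\cdot Q'\rangle=\text{tr}\!\left((K_\sigma^TQ_zK_\sigma)^T(K_\sigma^TQ'_zK_\sigma)\right)=\text{tr}(K_\sigma^TQ_z^TQ'_zK_\sigma)=\text{tr}(Q_z^TQ'_z)=\langle Q,Q'\rangle .
\]
I do not anticipate any real obstacle; the only subtlety worth flagging explicitly in the write-up is that positive-definiteness is exactly where the zero-row-sum condition enters.
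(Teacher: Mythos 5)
Your proposal is correct and follows essentially the same route as the paper: the paper dismisses the inner-product axioms as clear and, exactly as you do, derives $S_n$-invariance from $(K_\sigma^TQK_\sigma)_z=K_\sigma^TQ_zK_\sigma$, $K_\sigma^{-1}=K_\sigma^T$, and cyclicity of the trace. Your explicit observation that positive-definiteness is where the zero-row-sum condition enters (the form being only semi-definite on $\mat_n(\RR)$) is a correct and worthwhile filling-in of the step the paper leaves implicit.
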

\begin{proof}
That we have an inner product is clear.
Since the conjugation action of $S_n$ on $\mat(n,\RR)$ acts independently on the diagonal and off-diagonal entries, it follows that $(K^{-1}QK)_z=K^{-1}Q_zK$ for all permutation matrices $K$.
The result then follows from $K^{-1}=K^T$ and the cyclic property of the trace.
\end{proof}

\begin{fact}
Under a group invariant inner product, orthogonal complements of submodules are themselves submodules.
\end{fact}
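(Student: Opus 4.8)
The plan is to verify directly that the orthogonal complement $W^{\perp}$ of a submodule $W\subseteq V$ is stable under the group action; since an orthogonal complement is automatically a linear subspace, this stability is the only thing that needs checking. The sole inputs are the invariance of the inner product and the fact that the group acts by invertible maps.

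First I would fix an element $v\in W^{\perp}$ and a group element $\sigma$, and aim to show $\sigma\cdot v\in W^{\perp}$, that is, $\langle \sigma\cdot v, w\rangle = 0$ for every $w\in W$. The key manipulation is the rewriting
\[
\langle \sigma\cdot v, w\rangle = \langle \sigma^{-1}\cdot(\sigma\cdot v),\, \sigma^{-1}\cdot w\rangle = \langle v,\, \sigma^{-1}\cdot w\rangle,
\]
where the first equality is the $\sigma^{-1}$-invariance of the inner product and the second uses that the action is by a group, so $\sigma^{-1}\cdot(\sigma\cdot v)=v$. Since $W$ is a submodule, $\sigma^{-1}\cdot w\in W$, and hence $\langle v,\, \sigma^{-1}\cdot w\rangle = 0$ because $v\in W^{\perp}$. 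Therefore $\sigma\cdot v\in W^{\perp}$, and $W^{\perp}$ is a submodule.

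In the case at hand the relevant group is $S_n$ acting by conjugation and the inner product is $\langle Q,Q'\rangle=\text{tr}(Q_z^{T}Q'_z)$, whose $S_n$-invariance is exactly the content of the preceding lemma, so nothing further is required. There is no genuine obstacle here; the only subtlety worth flagging is that the argument relies on honest invariance of the form (rather than some weaker compatibility) together with the invertibility of each $\sigma$, both of which hold in this setting.
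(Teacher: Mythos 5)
Your argument is correct: the paper itself only cites \cite[Prop 1.5.2]{sagan} for this fact, and your direct verification --- rewriting $\langle \sigma\cdot v, w\rangle = \langle v, \sigma^{-1}\cdot w\rangle$ via invariance and using that $\sigma^{-1}\cdot w \in W$ --- is exactly the standard argument behind that citation. You also correctly note that only invariance of the form and invertibility of the group action are needed (positive definiteness plays no role here), so nothing further is required.
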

\begin{proof}
See \cite[Prop 1.5.2]{sagan}.
\end{proof}
Thus to explicitly identify matrices $X\in \zerocolsum_n$ which lie in the $S_n$-submodule isomorphic to the $\{n-2,2\}$, we look for symmetric zero row sum matrices $X$ that satisfy $\langle R_i+C_i,X\rangle=0$ for each $i$.
Thus: 
\begin{lemma}
\label{lem:zeroColSumSymmSpace}
The submodule of zero row sum matrices in $\mathcal{L}_n$ isomorphic to $\{n-2,2\}$ is the subspace of symmetric zero row sum matrices with zero entries on the diagonal.
That is,
\[
\left\{Q=(q_{ij})\in \zerocolsum_n:Q=Q^T,q_{ii}=0,\forall i\in [n]\right\}\cong \{n-2,2\}.
\]
This submodule can be expressed as
\[
\spam\left(L_{\sigma_1}-L_{\sigma_2}:e=\sigma_1^2=\sigma_2^2,\fix(\sigma_1)=\fix(\sigma_2)\right)
\]
or alternatively, for each cycle type $\lambda=(r_1,r_2,\ldots,r_s)$ with $1\leq r_i\leq 2$:
\[
\spam\left(L_{\sigma_1}-L_{\sigma_2}:\fix(\sigma_1)=\fix(\sigma_2),\lambda(\sigma_1)=\lambda(\sigma_2)=(r_1,r_2,\ldots,r_s)\right),
\]
so, in particular, choosing the cycle type $(2,2,1,1,\ldots,1)$, we have:
\[
\spam\left(L_{(ij)(kl)}-L_{(ik)(jl)}:|\{i,j,k,l\}|=4\right)\cong\left\{n-2,2\right\}.
\]
\end{lemma}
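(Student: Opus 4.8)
The plan is to prove the set equality using the invariant inner product introduced just before the lemma, and then to obtain all three spanning descriptions simultaneously from the irreducibility of $\{n-2,2\}$. Write $W:=\left\{Q=(q_{ij})\in\zerocolsum_n:Q=Q^{T},\ q_{ii}=0\ \forall i\right\}$; I will treat $n\geq 4$, since for $n\leq 3$ the partition $\{n-2,2\}$ does not occur and one checks directly that $W=0$.

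First I would note that $W$ is an $S_n$-submodule of $\zerocolsum_n$ contained in $\text{Symm}_n$, since conjugation by a permutation matrix commutes with transposition and merely permutes the diagonal entries, hence preserves symmetry, the zero row sum condition, and vanishing of the diagonal. By the discussion immediately preceding the lemma, it suffices to show $W$ coincides with $\{X\in\text{Symm}_n:\langle R_i+C_i,X\rangle=0\ \forall i\}$, the orthogonal complement (under $\langle Q,Q'\rangle=\Tr(Q_z^{T}Q'_z)$) of $\spam_\RR(R_i+C_i:i\in[n])\cong\{n\}\oplus\{n-1,1\}$ inside $\text{Symm}_n\cong\{n\}\oplus\{n-1,1\}\oplus\{n-2,2\}$. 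The key computation is that, for any $X=(x_{ij})\in\text{Symm}_n$, writing $(R_i+C_i)_z$ as the matrix carrying $1$'s exactly in the off-diagonal positions of row $i$ and of column $i$,
\[
\langle R_i+C_i,X\rangle=\Tr\bigl((R_i+C_i)_z^{T}X_z\bigr)=\sum_{j\neq i}x_{ji}+\sum_{j\neq i}x_{ij}=2\sum_{j\neq i}x_{ij}=-2x_{ii},
\]
the last step using symmetry and the zero row sum condition. Hence $\langle R_i+C_i,X\rangle=0$ for all $i$ precisely when every diagonal entry of $X$ vanishes, i.e.\ precisely when $X\in W$, which gives the first displayed identity. (As a consistency check, $\dim W=\binom{n}{2}-n=\tfrac12 n(n-3)=\dim\{n-2,2\}$.)

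For the spanning descriptions, I would use that when $\sigma$ is an involution the matrix $L_\sigma=K_\sigma-I_n$ is symmetric with zero row sum, with diagonal equal to $0$ on $\fix(\sigma)$ and $-1$ elsewhere; consequently, if $\sigma_1,\sigma_2$ are involutions with $\fix(\sigma_1)=\fix(\sigma_2)$, then $L_{\sigma_1}-L_{\sigma_2}=K_{\sigma_1}-K_{\sigma_2}\in W$. Thus each of the three claimed spans lies in $W$. Each is moreover an $S_n$-submodule, since conjugation by $K_\tau$ sends $L_{\sigma_1}-L_{\sigma_2}$ to $L_{\tau\sigma_1\tau^{-1}}-L_{\tau\sigma_2\tau^{-1}}$ and preserves being an involution, the cycle type, and equality of fixed-point sets (because $\fix(\tau\sigma\tau^{-1})=\tau(\fix(\sigma))$). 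Since $W\cong\{n-2,2\}$ is irreducible, it then suffices to exhibit one nonzero element of each span: $L_{(12)(34)}-L_{(13)(24)}\neq 0$ serves for both the involution description and the $(2,2,1,\ldots,1)$ description, and for a general cycle type $\lambda$ with at least two parts equal to $2$ one takes two permutations of type $\lambda$ that agree everywhere except that one pair of their $2$-cycles is rematched (so the fixed-point sets agree but the permutations differ). A nonzero submodule of the irreducible module $W$ equals $W$, so each span equals $W\cong\{n-2,2\}$.

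The only genuine computation is the inner-product identity; the step requiring care is the passage from ``span $\subseteq W$'' to ``span $=W$'', which rests on irreducibility of $\{n-2,2\}$ together with non-vacuousness of the index set. In particular, the cycle-type formulation is meaningful only when $\lambda$ has at least two parts equal to $2$: for $\lambda=(2,1^{n-2})$ the constraint $\fix(\sigma_1)=\fix(\sigma_2)$ forces $\sigma_1=\sigma_2$ and the span is $\{0\}$, which is why the explicitly stated instance uses the cycle type $(2,2,1,\ldots,1)$.
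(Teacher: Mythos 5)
Your proof is correct and follows essentially the same route as the paper's (which is stated very tersely): the first characterisation via orthogonality to the $R_i+C_i$ under the invariant inner product, and the spanning descriptions by observing that each span is an $S_n$-submodule of the irreducible module of symmetric zero-row-sum, zero-diagonal matrices and is nonzero. Your caveat that the cycle types $(1^n)$ and $(2,1^{n-2})$ yield the zero span is a fair point the paper's proof glosses over, but it does not affect correctness of your argument.
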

\begin{proof}
The first characterisation follows simply from $Q^T=Q$ and $\langle R_i+C_i,Q\rangle=0$, for each $i\in [n]$. 
The further characterisations follow from Fact~\ref{fact:typesubspace} and observing that a sum of permutation matrices gives a symmetric zero row sum matrix with zero on the diagonal if and only if the stated conditions are met.

\end{proof}

To assist with the double multiplicity of the irreducible module $\{n-1,1\}$ in the decomposition of $\zerocolsum_n$ (Theorem~\ref{thm:Lndim}), at this point it is helpful to explicitly record the following Jordan products.
\begin{lemma}
\label{lem:prods1}

\begin{enumerate}
\item[]
\item $R_i\odot R_j=-(R_i+R_j)$.
\item $R_i\odot C_j=n\delta_{ij}\left(J-R_j\right)-2C_j$.
\item $C_i\odot C_i=-2(n-1)C_i$.
\item $C_i\odot C_j=C_i+C_j-n(L_{ij}+L_{ji})$, for $i\neq j$.
\end{enumerate}

\end{lemma}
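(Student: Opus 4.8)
The plan is to reduce every product to algebra with rank-one outer products. Writing $e_i$ for the $i$th standard column vector, a direct check against the definitions gives the closed forms
\[
R_i=\oneVec e_i^T-I_n,\qquad C_i=e_i\oneVec^T-n\,e_ie_i^T,\qquad L_{ij}=e_ie_j^T-e_ie_i^T,\qquad J=\fra{1}{n}\oneVec\oneVec^T-I_n .
\]
With these in hand, every matrix product $AB$ collapses using only the contractions $e_i^Te_j=\delta_{ij}$, $\oneVec^Te_i=1$ and $\oneVec^T\oneVec=n$, and then $A\odot B=AB+BA$ is immediate. Since the four identities are being verified rather than discovered, the work is essentially bookkeeping, so I will only indicate the shape of each computation.

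For (1): expanding, $R_iR_j=\oneVec(e_i^T\oneVec)e_j^T-\oneVec e_i^T-\oneVec e_j^T+I_n=-R_i$ for all $i,j$, so symmetrising gives $R_i\odot R_j=-(R_i+R_j)$. For (3): writing $C_i=e_iv_i^T$ with $v_i^T:=\oneVec^T-n\,e_i^T$, the scalar $v_i^Te_i=1-n$ yields $C_i^2=-(n-1)C_i$, hence $C_i\odot C_i=2C_i^2=-2(n-1)C_i$. For (4), with $i\neq j$ we have $v_i^Te_j=1$, so $C_iC_j=e_iv_j^T=e_i\oneVec^T-n\,e_ie_j^T=C_i-nL_{ij}$; adding the analogous $C_jC_i=C_j-nL_{ji}$ gives $C_i\odot C_j=C_i+C_j-n(L_{ij}+L_{ji})$.

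Identity (2) is the only one requiring a second step, since its right-hand side involves $J$. Using $C_j=e_jv_j^T$ one finds $R_iC_j=\oneVec(e_i^Te_j)v_j^T-e_jv_j^T=\delta_{ij}\,\oneVec v_j^T-C_j$, and, because $v_j^T\oneVec=n-n=0$, also $C_jR_i=e_j(v_j^T\oneVec)e_i^T-C_j=-C_j$. It then remains to recognise $\oneVec v_j^T=\oneVec\oneVec^T-n\,\oneVec e_j^T$ in the desired form: substituting $\oneVec\oneVec^T=n(J+I_n)$ and $\oneVec e_j^T=R_j+I_n$ collapses this to $\oneVec v_j^T=n(J-R_j)$, whence $R_i\odot C_j=n\delta_{ij}(J-R_j)-2C_j$. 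The only real obstacle here is keeping the identifications straight; an alternative via $R_i=\sum_kL_{ki}$, $C_i=\sum_kL_{ik}$ together with Lemma~\ref{lem:elementaryJordan} also works, but the double sums make it less transparent. As a consistency check, each right-hand side visibly lies in $\text{EI}_n+\text{Symm}_n$, in agreement with Lemma~\ref{lem:submoduleslist}(5).
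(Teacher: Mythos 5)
Your proposal is correct and is essentially the paper's own argument: the paper likewise writes $R_i=T_i-I_n$ and $C_i=U_i-nP_i$ with $T_i=\oneVec e_i^T$, $U_i=e_i\oneVec^T$, $P_i=e_ie_i^T$, and verifies each identity by expanding the matrix products of these rank-one pieces, exactly as you do (your $e_iv_i^T$ factorisation is only a minor streamlining of the same bookkeeping). All four computations check out, including the reduction $\oneVec v_j^T=n(J-R_j)$ needed for identity (2).
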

\begin{proof}
We obtain (1) from $R_iR_j=-R_i$.
The other results are obtained using the convenient forms $R_i=T_i-I_n$ and $C_i=U_i-nP_{i}$ where $T_i$ is the matrix with $1$'s on the $i^{th}$ column and zeroes elsewhere, $U_i$ is the matrix with $1$'s in the $i^{th}$ row and zeroes elsewhere and $P_i$ is the matrix with only nonzero entry 1 appearing in $i^{th}$ diagonal position. We then have,
$$R_iC_j = (T_i-I_n)(U_j-nP_j) = T_iU_j -nT_iP_j -U_j+nP_j=\delta_{ij}n(J+I_n)-n\delta_{ij}T_i-U_j+nP_j.$$
Hence, if $i \neq j$, $R_iC_j=-C_j$ and $R_iC_i = n(J-R_i)-C_j$. 
Similarly, $C_jR_i=U_jT_i-U_j-nP_jT_i+nP_j=-C_j \ \forall \ i,j$, which yields (2).

Considering products of the $C_i$ matrices we obtain
$$\begin{cases}
C_jC_i =U_j-nL_{ij}-nP_j = C_j-nL_{ij}, & i \neq j, \\
C_jC_i = U_i-nU_i-nP_i+n^2P_i = -(n-1)C_i, & i = j; \\
\end{cases}$$
and
$$\begin{cases}
C_iC_j = C_i-nL_{ji}, & i \neq j, \\
C_iC_j = -(n-1)C_i, & i = j; \\
\end{cases}$$
which yields (3) and (4).

\end{proof}

We now present the three lemmas that provide the means for proving the classification presented below in Theorem~\ref{thm:Snclassification}.

\begin{lemma}
\label{lem:symmEILemma}
Suppose $n>2$ and $ \zerocolsum\subseteq \zerocolsum_n$ is a Jordan algebra and $S_n$-module containing both $\spam_\RR\left(J\right)\cong \{n\}$ and at least one $S_n$-submodule isomorphic to $\{n-1,1\}$.
Then precisely one of the following possibilities hold: 
%there exists fixed $\mu,\nu\in \mathbb{R}$, not both zero, such that $\{n\}\oplus\{n-1,1\}\cong \spam\left(\mu R_i+\nu C_i\right)\subseteq \zerocolsum$, and 

\begin{enumerate}
    \item $\text{EI}_n\subseteq \zerocolsum$ and $\text{Symm}_n\nsubseteq \zerocolsum$;
    \item $\text{Symm}_n\subseteq \zerocolsum$ and $\text{EI}_n\nsubseteq \zerocolsum$;
    \item $\text{EI}_n+\text{Symm}_n\subseteq \zerocolsum$.
\end{enumerate} 
\end{lemma}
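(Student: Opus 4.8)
The plan is to recast the trichotomy as a single disjunction. Regard $\text{EI}_n\subseteq\zerocolsum$ and $\text{Symm}_n\subseteq\zerocolsum$ as two Boolean statements: possibility (1) is the pattern $(\text{true},\text{false})$, (2) is $(\text{false},\text{true})$, and (3) is $(\text{true},\text{true})$ --- the last because $\text{EI}_n+\text{Symm}_n\subseteq\zerocolsum$ holds exactly when each summand lies in $\zerocolsum$. These three patterns are pairwise exclusive and the only other pattern is $(\text{false},\text{false})$, so the whole content of the lemma is the disjunction $\text{EI}_n\subseteq\zerocolsum$ or $\text{Symm}_n\subseteq\zerocolsum$, and that is what I would prove.

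Next I would pin down the shape of the submodule handed to us. By Theorem~\ref{thm:Lndim} the $\{n-1,1\}$-isotypic component of $\zerocolsum_n$ has multiplicity two, and the submodules $\spam_\RR(R_i-J:i\in[n])$ (the $\{n-1,1\}$-summand of $\text{EI}_n$) and $\spam_\RR(C_i-J:i\in[n])$ are two distinct irreducible copies of $\{n-1,1\}$ inside it; distinctness is a quick check (e.g.\ $R_1-R_2=\oneVec(e_1-e_2)$ has all rows equal, and for $n>2$ no nonzero combination of the $C_k$ has this property, since the $k$-th rows of distinct $C_k$ are not proportional). Their direct sum therefore exhausts the isotypic component, so by Schur's lemma every $S_n$-submodule of $\zerocolsum_n$ isomorphic to $\{n-1,1\}$ equals
\[
W_{(\mu,\nu)}:=\spam_\RR\!\big(\mu R_i+\nu C_i-(\mu+\nu)J:i\in[n]\big),\qquad(\mu,\nu)\in\RR^2\setminus\{0\}.
\]
The hypothesis supplies such a pair with $W_{(\mu,\nu)}\subseteq\zerocolsum$, and since $J\in\zerocolsum$ we add back $(\mu+\nu)J$ to get $\mu R_i+\nu C_i\in\zerocolsum$ for all $i$.

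The core step is a single Jordan-square computation. Using $R_i\odot R_i=-2R_i$, $R_i\odot C_i=nJ-nR_i-2C_i$ and $C_i\odot C_i=-2(n-1)C_i$ from Lemma~\ref{lem:prods1}, one gets
\[
(\mu R_i+\nu C_i)\odot(\mu R_i+\nu C_i)=2\mu\nu n\,J-2\mu(\mu+\nu n)\,R_i-2\nu\big(2\mu+(n-1)\nu\big)\,C_i ,
\]
so, cancelling the $J$-term, $\mu_1R_i+\nu_1C_i\in\zerocolsum$ for all $i$, with $\mu_1=\mu(\mu+\nu n)$, $\nu_1=\nu(2\mu+(n-1)\nu)$, and one computes $\mu\nu_1-\nu\mu_1=\mu\nu(\mu-\nu)$. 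If $\mu\nu(\mu-\nu)\neq0$, then $(\mu,\nu)$ and $(\mu_1,\nu_1)$ are linearly independent, so $R_i,C_i\in\zerocolsum$ for every $i$: then $\text{EI}_n=\spam_\RR(R_i)\subseteq\zerocolsum$, and $C_i\odot C_j=C_i+C_j-n(L_{ij}+L_{ji})$ for $i\neq j$ yields $L_{ij}+L_{ji}\in\zerocolsum$, whence $\text{Symm}_n=\spam_\RR(L_{ij}+L_{ji}:i\neq j)\subseteq\zerocolsum$ --- the last equality because that span is an $S_n$-submodule of $\text{Symm}_n$ containing each $R_i+C_i=\sum_{j\neq i}(L_{ij}+L_{ji})$ and, for $n\geq4$, the $\{n-2,2\}$-generators of Lemma~\ref{lem:zeroColSumSymmSpace}, so it is all of $\text{Symm}_n$; thus (3) holds. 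If instead $\mu\nu(\mu-\nu)=0$, then $\nu=0$, $\mu=0$, or $\mu=\nu$: for $\nu=0$ we get $R_i\in\zerocolsum$ and hence $\text{EI}_n\subseteq\zerocolsum$; for $\mu=0$ we get $C_i\in\zerocolsum$ and hence, as above, $\text{Symm}_n\subseteq\zerocolsum$; for $\mu=\nu\neq0$ we get $R_i+C_i\in\zerocolsum$, and $(R_i+C_i)\odot(R_j+C_j)=-(R_i+C_i)-(R_j+C_j)-n(L_{ij}+L_{ji})$ for $i\neq j$ again gives $\text{Symm}_n\subseteq\zerocolsum$. In every case the disjunction holds.

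I expect the main obstacle to be not any one computation but the bookkeeping around the double multiplicity of $\{n-1,1\}$: one must be certain that \emph{every} $\{n-1,1\}$-submodule of $\zerocolsum_n$ is some $W_{(\mu,\nu)}$ (this is precisely where Theorem~\ref{thm:Lndim} enters), and then read off from $\mu\nu_1-\nu\mu_1=\mu\nu(\mu-\nu)$ that the three ``fixed'' directions are $\nu=0$ (the $R$-copy, inside $\text{EI}_n$), $\mu=0$ (the $C$-copy), and $\mu=\nu$ (the symmetric copy $\spam_\RR(R_i+C_i-2J)$, inside $\text{Symm}_n$), whereas any other direction is carried by the Jordan square onto an independent direction and so forces the entire $\{n-1,1\}$-isotypic component --- hence all the $R_i$ and $C_i$, hence both $\text{EI}_n$ and $\text{Symm}_n$. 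The three fixed directions are then closed off by short, direct arguments (immediate for $\nu=0$; via $C_i\odot C_j$ or $(R_i+C_i)\odot(R_j+C_j)$ otherwise).
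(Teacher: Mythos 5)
Your proposal is correct, and it is built from the same ingredients as the paper's proof: the parameterisation of a $\{n-1,1\}$-copy as the span of the $\mu R_i+\nu C_i$ (modulo $J$), the Jordan square of $\mu R_i+\nu C_i$ computed from Lemma~\ref{lem:prods1}, and the products $C_i\odot C_j$ and $(R_i+C_i)\odot(R_j+C_j)$ used to generate $\text{Symm}_n$. The difference is in the logical packaging. The paper splits on whether $\mathcal{L}$ contains precisely one or two copies of $\{n-1,1\}$; in the single-copy case it forces the Jordan square (minus its $J$-component) to be proportional to $\mu R_i+\nu C_i$, deduces $\mu=\nu$, and disposes of the $\mu=0$ direction by contradiction. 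You instead first observe that the trichotomy is equivalent to the single disjunction ``$\text{EI}_n\subseteq\mathcal{L}$ or $\text{Symm}_n\subseteq\mathcal{L}$'' (valid, since (3) is exactly the conjunction of the two inclusions), and then split on the determinant identity $\mu\nu_1-\nu\mu_1=\mu\nu(\mu-\nu)$: the non-degenerate case forces both $R_i,C_i\in\mathcal{L}$, and the three degenerate directions $\nu=0$, $\mu=0$, $\mu=\nu$ are closed off directly, with no multiplicity case analysis and no contradiction argument. Your Schur's-lemma step showing that \emph{every} $\{n-1,1\}$-submodule of $\mathcal{L}_n$ is of the form $W_{(\mu,\nu)}=\mathrm{span}_{\mathbb{R}}(\mu R_i+\nu C_i-(\mu+\nu)J)$ also makes explicit (and, for $\mu+\nu=0$, slightly sharpens) what the paper asserts more casually about $\mathrm{span}_{\mathbb{R}}(\mu R_i+\nu C_i)$. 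What the paper's route buys is that it tracks directly which of (1), (2), (3) occurs; what yours buys is a shorter, uniform argument that only needs to rule out the pattern where both inclusions fail, which is all the trichotomy requires. The minor detour justifying $\text{Symm}_n=\mathrm{span}_{\mathbb{R}}(L_{ij}+L_{ji}:i\neq j)$ via the $\{n-2,2\}$ generators could be replaced by a one-line dimension count, but it is not a gap.
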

\begin{proof}
Throughout, assume $\zerocolsum$ satisfies the stated conditions.
\begin{itemize}
%\item[] 
\item 
Suppose $\zerocolsum$ contains \emph{precisely one} submodule isomorphic to $\{n-1,1\}$. 

Since $R_i+C_i\in \text{Symm}_n$, we see that 
\[
\{n\}\oplus 2\{n-1,1\}\cong\spam_\RR(R_i,C_j:i,j\in [n])=\spam_\RR(R_i,R_j+C_j:i,j\in [n])\subset \text{EI}_n+\text{Symm}_n,
\]
and we hence cannot have both $\text{EI}_n\subseteq \zerocolsum$ and $\text{Symm}_n\subseteq \zerocolsum$.
Additionally, $\{n\}\cong \spam_\RR(J)\subset \zerocolsum$ implies there must exist fixed $\mu,\nu\in \mathbb{R}$, not both equal to zero, such that $\spam_{\RR}\left(\mu R_i+\nu C_i\right)\subseteq \zerocolsum$.

If $\nu=0$ and $\mu\neq 0$, we have $\spam_\RR(R_i:i\in [n])=\text{EI}_n\subseteq \zerocolsum$, and case (1) is established.

If $\mu=0$ and $\nu\neq 0$, taking $i\neq j$ we have $C_i,C_j\in \zerocolsum$ and, recalling Lemma~\ref{lem:prods1}:
\begin{align*}
C_i\odot C_j=C_i+C_j-n\left(L_{ij}+L_{ji}\right)\in \zerocolsum.
\end{align*}
This shows that each $L_{ij}+L_{ji}\in \zerocolsum$ and hence $\spam_\RR(L_{ij}+L_{ji}:i,j\in [n], i\neq j)=\text{Symm}_n\subseteq \zerocolsum$. 
However, $R_i+C_i\in \text{Symm}_n$ gives $R_i=(R_i+C_i)-C_i\in \zerocolsum \cap \text{EI}_n$, which, from our initial observation above, contradicts the assumptions on $\mathcal{L}$.

If both $\mu,\nu\neq 0$,
using Lemma~\ref{lem:prods1}, we calculate:
\[
\left(\mu R_i+\nu C_i\right)\odot \left(\mu R_i+\nu C_i\right)=-2\mu^2R_i+2\mu \nu\left[n(J-R_i)-2C_i\right]-2\nu^2(n-1) C_i\in \zerocolsum.
\]
However, by assumption $J\in \zerocolsum$ and hence
\[
-2\mu^2R_i+2\mu \nu\left[-nR_i-2C_i\right]-2\nu^2(n-1) C_i\in \zerocolsum,
\]
also.
Linear independence implies there must exist $\lambda\neq 0$ such that
\[
-2\mu^2R_i+2\mu \nu\left[-nR_i-2C_i\right]-2\nu^2(n-1) C_i=\lambda\left(\mu R_i+\nu C_i\right).
\]
From this we are led to $\mu=\nu$ and hence $R_i+C_i\in \zerocolsum$.
Considering
    \[
    \left( R_i+C_i\right)\odot \left(R_j+ C_j\right)=-\left( R_i+C_i\right)-\left( R_j+C_j\right)-n\left(L_{ij}+L_{ji}\right)\in\zerocolsum,
    \]
 shows that each $L_{ij}+L_{ji}\in \zerocolsum$ also and, since these elements span $\text{Symm}_n$, we see that $\text{Symm}_n\subseteq \zerocolsum$, which establishes case (2).

\item

On the other hand, if $\{n\}\oplus 2\{n-1,1\}\cong \spam_\RR\left(R_i,C_j:i,j\in [n]\right)\subseteq \zerocolsum$, we have $\spam_\RR\left(R_i:i\in [n]\right)=\text{EI}_n\subset \zerocolsum$ and, referring above, $C_i\odot C_j\in \zerocolsum$ implies $\text{Symm}_n\subseteq \zerocolsum$, which establishes case (3).

\end{itemize}

\end{proof}

The following lemma is only valid for $n>4$. 
There is nothing to say for $n<4$, and we treat the case $n=4$ separately below.

\begin{lemma}
\label{lem:symmLemma}
Suppose $n>4$ and $ \zerocolsum\subseteq \zerocolsum_n$ is a Jordan algebra and $S_n$-module containing $\spam_\RR(J)\cong \{n\}$ and the submodule isomorphic to $\{n-2,2\}$ (c.f. Lemma~\ref{lem:zeroColSumSymmSpace}).
Then $\{n\}\oplus\{n-1,1\}\oplus\{n-2,2\}\cong \text{Symm}_n\subseteq \zerocolsum$.
\end{lemma}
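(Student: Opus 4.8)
The plan is to prove the slightly stronger statement $\zerocolsum\cap\text{Symm}_n=\text{Symm}_n$, working entirely inside the multiplicity-free module $\text{Symm}_n\cong\{n\}\oplus\{n-1,1\}\oplus\{n-2,2\}$ of Lemma~\ref{lem:submoduleslist}. Write $W\subseteq\zerocolsum_n$ for the unique submodule isomorphic to $\{n-2,2\}$; by Lemma~\ref{lem:zeroColSumSymmSpace} it is the space of symmetric zero-row-sum matrices with zero diagonal, so $W\subseteq\text{Symm}_n$, and (using the multiplicity-freeness coming from Theorem~\ref{thm:Lndim}) $W$ and $\spam_\RR(J)$ are precisely the $\{n-2,2\}$- and $\{n\}$-summands of $\text{Symm}_n$. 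By hypothesis $\spam_\RR(J)\subseteq\zerocolsum$ and $W\subseteq\zerocolsum$, so the whole task reduces to showing that the remaining summand, the copy of $\{n-1,1\}$ inside $\text{Symm}_n$, also lies in $\zerocolsum$.

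First I would observe that $\zerocolsum\cap\text{Symm}_n$ is an $S_n$-submodule of $\text{Symm}_n$ which contains $\spam_\RR(J)$ and $W$ and is closed under Jordan products drawn from $W$: if $A,B\in W$ then $A\odot B=AB+BA\in\zerocolsum$ since $\zerocolsum$ is a Jordan algebra, while $A\odot B$ is symmetric (as in the proof of Lemma~\ref{lem:submoduleslist}(3)) and has zero row sums (Fact~\ref{fact:rateSubAlgebra}), hence $A\odot B\in\zerocolsum\cap\text{Symm}_n$. By Maschke's theorem (Theorem~\ref{thm:maschkesTheorem}) together with the multiplicity-freeness of $\text{Symm}_n$, the submodule $\zerocolsum\cap\text{Symm}_n$ is the direct sum of a subset of the three irreducible summands. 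Since every matrix in $\spam_\RR(J)\oplus W$ has constant diagonal (constant from $J$, zero from $W$, by Lemma~\ref{lem:zeroColSumSymmSpace}), it is enough to exhibit one Jordan product $A\odot B$ with $A,B\in W$ whose diagonal is \emph{non-constant}: such a matrix cannot lie in $\spam_\RR(J)\oplus W$, which forces the $\{n-1,1\}$-summand into $\zerocolsum\cap\text{Symm}_n$, and then $\text{Symm}_n=\zerocolsum\cap\text{Symm}_n\subseteq\zerocolsum$.

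For the witness I would take $X:=L_{(12)(34)}-L_{(13)(24)}\in W$, one of the spanning elements identified in Lemma~\ref{lem:zeroColSumSymmSpace} (legitimate since $|\{1,2,3,4\}|=4\le n$). A short computation gives $X\odot X=2X^2$, and $X^2$ turns out to have diagonal entry $2$ in each of the positions $1,2,3,4$ and $0$ in every other position (its nonzero entries being confined to rows and columns $1,\dots,4$). Since $n>4$ there is at least one position carrying a $0$ on the diagonal, so $X\odot X$ has non-constant diagonal and the argument is complete. The only genuine computation is this single Jordan product together with reading off its diagonal, and it is precisely where the hypothesis $n>4$ enters: for $n=4$ the diagonal of $X^2$ is the constant vector $(2,2,2,2)$, so the method collapses, which is why the case $n=4$ must be treated separately below.
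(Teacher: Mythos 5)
Your proposal is correct and follows essentially the same route as the paper: exhibit a Jordan product of elements of the $\{n-2,2\}$ submodule that is symmetric, zero row sum, but has non-constant diagonal, and then use the multiplicity-free decomposition $\text{Symm}_n\cong\{n\}\oplus\{n-1,1\}\oplus\{n-2,2\}$ to force the $\{n-1,1\}$ summand (and hence all of $\text{Symm}_n$) into $\zerocolsum$. The only difference is the witness --- you square the single element $X=L_{(12)(34)}-L_{(13)(24)}$, obtaining diagonal $(2,2,2,2,0,\ldots,0)$, which is indeed non-constant precisely when $n>4$, whereas the paper takes the Jordan product of two distinct elements supported on five symbols; both computations are valid.
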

\begin{proof}
By Lemma \ref{lem:zeroColSumSymmSpace} we have
\[
Q_1=L_{(12)(34)}-L_{(13)(24)},\quad Q_2=L_{(12)(35)}-L_{(13)(25)}\in \zerocolsum.
\]
Direct computation (the $5\times 5$ case is sufficient) shows that $Q_1\odot Q_2$ is a symmetric matrix with non-constant diagonal entries, and hence by Lemma~\ref{lem:zeroColSumSymmSpace} cannot be contained in the submodules of $\zerocolsum$ isomorphic to $\{n\}$ or $\{n-2,2\}$.
Lemma~\ref{lem:submoduleslist} then tells us that $\{n\}\oplus\{n-1,1\}\cong \spam_\mathbb{R}\left(R_i+C_i:i\in [n]\right)\subset\zerocolsum$ and hence $\text{Symm}_n\subseteq \zerocolsum$, as required.
\end{proof}

The following result is valid for $n>3$. 
For $n\!=\!2$ there is nothing to say, and for $n\!=\!3$ we find that the submodule isomorphic to $\{3\}\oplus\{1^3\}$ forms a Jordan algebra (as is confirmed below).

\begin{lemma}
\label{lem:doublyStochasticLemma}
Suppose $n>3$ and $ \zerocolsum\subseteq \zerocolsum_n$ is a Jordan algebra and $S_n$-module containing $\text{Anti}_n\cong \{n-2,1^2\}$.
Then $\text{DS}_n\subseteq \zerocolsum$.
\end{lemma}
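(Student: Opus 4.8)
The plan is to reduce everything to showing $\text{Symm}_n\subseteq \zerocolsum$: since $\text{DS}_n=\text{Symm}_n\oplus\text{Anti}_n$ and $\text{Anti}_n\subseteq \zerocolsum$ by hypothesis, this immediately yields $\text{DS}_n\subseteq \zerocolsum$. Because $\zerocolsum$ is a Jordan algebra containing $\text{Anti}_n$, it contains the subspace $V:=\spam_\RR\left(A\odot B:A,B\in\text{Anti}_n\right)$. By Lemma~\ref{lem:submoduleslist}(4) the Jordan product of two anti-symmetric matrices is symmetric, so every element of $V$ is a symmetric zero row sum matrix (which then automatically has zero column sums as well), hence $V\subseteq\text{Symm}_n$. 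Since $\sigma\cdot(A\odot B)=(\sigma\cdot A)\odot(\sigma\cdot B)$ for all $\sigma\in S_n$ and $\text{Anti}_n$ is an $S_n$-module, $V$ is in fact an $S_n$-submodule of $\text{Symm}_n$. So it remains to prove $V=\text{Symm}_n$.

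First I would use that $\text{Symm}_n\cong\{n\}\oplus\{n-1,1\}\oplus\{n-2,2\}$ is multiplicity-free, so every $S_n$-submodule of $\text{Symm}_n$ is the direct sum of a subset of the three isotypic components $\{n\}$, $\{n-1,1\}$, $\{n-2,2\}$; in particular, the submodule generated by a single matrix whose projection onto each of the three components is non-zero must be all of $\text{Symm}_n$. Hence it suffices to exhibit one $N\in V$ with non-zero $\{n\}$-, $\{n-1,1\}$- and $\{n-2,2\}$-component.

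For the witness I would take (this uses $n>3$, so that $(124)\in S_n$)
\[
N:=\left(L_{(123)}-L_{(132)}\right)\odot\left(L_{(124)}-L_{(142)}\right)\in V,
\]
both factors lying in $\text{Anti}_n$ since they have the form $L_\sigma-L_{\sigma^{-1}}$ (recall $L_\sigma=K_\sigma-I_n$). A direct, finite computation (the only non-zero part of $N$ is its principal $\{1,2,3,4\}$ submatrix, as the permutations involved fix $5,\dots,n$) shows that $N$ is symmetric with zero row sums, has $\Tr N=-4$, has diagonal entries $(-2,-2,0,\dots,0)$, and has off-diagonal entries equal to $0$ on the pair $\{1,2\}$, to $-2$ on the pair $\{3,4\}$, and to $1$ on the four remaining unordered pairs from $\{1,2,3,4\}$ (and $0$ elsewhere). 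From this: (i) the $\{n\}$-isotypic projection of $N$ is $\tfrac{1}{|S_n|}\sum_{\sigma\in S_n}\sigma\cdot N=\tfrac{\Tr N}{\Tr J}\,J=\tfrac{4}{n-1}\,J\neq0$ (using that conjugation preserves the trace), so the $\{n\}$-component is non-zero; (ii) any matrix lying in $\spam_\RR(J)$ plus the $\{n-2,2\}$-submodule has constant diagonal — the first summand because $J$ has constant diagonal, the second because by Lemma~\ref{lem:zeroColSumSymmSpace} that submodule consists of matrices with zero diagonal — so the non-constant diagonal of $N$ forces a non-zero $\{n-1,1\}$-component; (iii) recalling that $\{n\}\oplus\{n-1,1\}$ is exactly $\spam_\RR\left(R_i+C_i:i\in[n]\right)$ and that $(R_i+C_i)_{jk}=\delta_{ik}+\delta_{ij}$ for $j\neq k$, a hypothetical relation $N=\sum_i\alpha_i(R_i+C_i)$ would force $\alpha_j+\alpha_k=N_{jk}$ for all $j\neq k$; the pairs inside $\{1,2,3,4\}$ give $\alpha_1+\alpha_2=0$ and $\alpha_1+\alpha_3=\alpha_2+\alpha_3=\alpha_1+\alpha_4=1$, hence $\alpha_1=\alpha_2=0$ and $\alpha_3=\alpha_4=1$, contradicting $\alpha_3+\alpha_4=N_{34}=-2$, so the $\{n-2,2\}$-component of $N$ is non-zero.

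Putting these together, the $S_n$-submodule of $\text{Symm}_n$ generated by $N$ has full image under all three isotypic projections, hence equals $\text{Symm}_n$ by multiplicity-freeness; since it lies in $V$ we get $\text{Symm}_n\subseteq V\subseteq\zerocolsum$, and therefore $\text{DS}_n=\text{Symm}_n\oplus\text{Anti}_n\subseteq\zerocolsum$. I expect the main obstacle to be part (iii): one must be sure that the Jordan square of $\text{Anti}_n$ genuinely reaches the $\{n-2,2\}$ component rather than remaining inside $\spam_\RR(R_i+C_i)$, and that the single witness $N$ works uniformly down to the tight case $n=4$. Both amount to the elementary bookkeeping above inside a $4\times4$ block, so this is a matter of care rather than a genuine difficulty.
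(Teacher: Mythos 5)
Your proof is correct, and I verified the key computation: with $A=L_{(123)}-L_{(132)}$ and $B=L_{(124)}-L_{(142)}$ one indeed gets a symmetric zero row sum matrix $N=A\odot B$ with diagonal $(-2,-2,0,\ldots,0)$, off-diagonal entry $0$ on the pair $\{1,2\}$, $-2$ on $\{3,4\}$ and $1$ on the remaining pairs inside $\{1,2,3,4\}$, so your three non-vanishing isotypic projections check out. However, your organisation differs from the paper's. The paper squares a single element of $\text{Anti}_n$ built from a $4$-cycle, $\left(L_\sigma-L_{\sigma^3}\right)\odot\left(L_\sigma-L_{\sigma^3}\right)=4L_{(ik)(jl)}$, uses the span of these matrices to place the trivial and $\{n-2,2\}$ pieces inside $\zerocolsum$, and then appeals to Lemma~\ref{lem:symmLemma} to obtain the $\{n-1,1\}$ copy and hence $\text{Symm}_n$; since that lemma only holds for $n>4$, the paper handles $n=4$ by a separate explicit computation (a Jordan product of two \emph{distinct} antisymmetric rate matrices with non-constant diagonal — essentially the same kind of witness you use, but only deployed in the exceptional case). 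Your route instead works entirely inside the multiplicity-free module $\text{Symm}_n\cong\{n\}\oplus\{n-1,1\}\oplus\{n-2,2\}$ with one witness whose projections onto all three components are non-zero, so the submodule it generates is all of $\text{Symm}_n$ and you conclude uniformly for every $n\geq 4$, with no case split and no dependence on Lemma~\ref{lem:symmLemma}. What the paper's version buys is reuse of machinery it has already established (Fact~\ref{fact:typesubspace}, Lemmas~\ref{lem:zeroColSumSymmSpace} and~\ref{lem:symmLemma}); what yours buys is a shorter, self-contained and uniform argument whose only computational input is a single $4\times 4$ Jordan product together with the trace and diagonal criteria you spell out.
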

\begin{proof}
Fact \ref{fact:typesubspace} shows that taking $\sigma=(ijkl)\in S_n$ we have
\[
Q=L_\sigma-L_{\sigma^3}\in \text{Anti}_n\cong \{n-2,1^2\},
\]
and a short calculation yields,
\[
Q\odot Q=4L_{(ik)(jl)}\in \zerocolsum.
\]
An easy extension of Lemma~\ref{lem:zeroColSumSymmSpace} shows that $\spam_\RR\left(L_{(ik)(jl)}:|\{i,j,k,l\}|=4\right)\cong \{n\}\oplus\{n-2,2\}$, and hence for $n>4$ we may apply Lemma~\ref{lem:symmLemma} to conclude that $\{n\}\oplus\{n-1,1\}\oplus\{n-2,2\}\oplus\{n-2,1^2\} \cong \text{DS}_n\subseteq \zerocolsum$, as required. For $n=4$, it can be checked that the Jordan product of two distinct $4\times 4$ antisymmetric rate matrices is a symmetric matrix with non-constant diagonal entries. For example, consider the following Jordan product of antisymmetric rate matrices, 
$$\begin{pmatrixT} 0 & 0 & -1 & 1 \\ 0 & 0 & 1 & -1 \\ 1 & -1 & 0 & 0 \\ -1 & 1 & 0 & 0  \end{pmatrixT} \odot \begin{pmatrixT} 0 & -1 & 0 & 1 \\ 1 & 0 & -1 & 0 \\ 0 & 1 & 0 & -1 \\ -1 & 0 & 1 & 0  \end{pmatrixT} = \begin{pmatrixT} -2 & 0 & 0 & 2 \\ 0 & 2 & -2 & 0 \\ 0 & -2 & 2 & 0 \\ 2 & 0 & 0 & -2 \end{pmatrixT}. $$ 
 Hence $\{4\}\oplus\{3,1\}\cong \spam_\mathbb{R}\left(R_i+C_i:i\in [4]\right)\subset\zerocolsum$ and so $\text{DS}_n \subseteq \zerocolsum$.
\end{proof}
Putting these lemmas together we obtain the following theorem and the hierarchies of Jordan-Markov models presented in Figure~\ref{fig:hierarchies}.
The cases $n\!=\!3$ and $n\!=\!4$ include additional cases due to the appearance of the normal subgroups $C_3$ and $V_4$, respectively, and hence fall under the purview of `group-based' models (see \cite[Chap 7.3.2]{steel}, and also \cite{woodhams} for updated perspectives on this class of models).

\begin{theorem}
\label{thm:Snclassification}
The nontrivial, linear, uniformization stable Markov models with full $S_n$ symmetry occur as intersections with $\zerocolsum_n^+$ and the following linear subspaces:
\begin{enumerate}
    \item[] For $n>4$:
    \begin{itemize}
        \item The `constant input' model: $\text{CI}_n=\spam_{\RR}\left(J\right)$, with $\dim(\text{CI}_n)=1$.
        \item The `equal input' model: $\text{EI}_n=\spam_{\RR}\left(R_i:i\in [n]\right)$, with $\dim(\text{EI}_n)=n$.
        \item The `symmetric' model: $\text{Symm}_n=\spam_{\RR}\left(L_{ij}+L_{ji}:i,j\in [n]\right)$, with $\dim(\text{Symm}_n)=\fra{1}{2}n(n-1)$.
         \item  $\text{EI}_n+\text{Symm}_n$, with $\dim(\text{EI}_n+\text{Symm}_n)=\fra{1}{2}(n+2)(n-1)$.
         \item The `doubly stochastic' model: $\text{DS}_n=\spam_{\RR}\left(L_\sigma:\sigma\in S_n\right)$ with $\dim(\text{DS}_n)=(n-1)^2$.
         \item The `general Markov' model $\text{GM}_n=\spam_{\RR}\left(L_{ij}:1\leq i,j \leq n\right)=\mathcal{L}_n$ with $\dim(\text{GM}_n)=n(n-1)$.
    \end{itemize}
    \item[] For $n\!=\!2$:
    \begin{itemize}
        \item $\text{CI}_2=\spam_\RR\left(J\right)$ with $\dim\left(\text{CI}_2\right)=1$.
        \item $\text{GM}_2=\text{EI}_2=\zerocolsum_2$, with $\dim\left(\zerocolsum_2\right)=2$.
    \end{itemize}
    \item[]  For $n\!=\!3$, each of the cases valid for the general $n>4$ is included together with two additional cases:
    \begin{itemize}
        \item The `group-based' model: $\zerocolsum_{C_3}=\spam_\RR(L_\sigma:\sigma\in C_3)$ obtained from the cyclic group $C_3$, with $\dim(\zerocolsum_{C_3})=2$.
        \item $\zerocolsum_{C_3}+\text{EI}_3=\spam_\RR\left(R_1,R_2,R_3,L_{(123)}-L_{(132)}\right)$, with $\dim\left(\zerocolsum_{C_3}+\text{EI}_3\right)=4$. 
        \end{itemize} 
    \item[] For $n\!=\!4$, each of the cases valid for the general $n>4$ is included together with two additional cases. We note that these two models are well-known models in phylogenetics, commonly denoted by K3ST, and K3ST+F81 respectively \cite{felsenstein1981,Kimura_1981}:
    \begin{itemize}
        \item The `group-based' model: $\zerocolsum_{V_4}=\spam_\RR(L_\sigma:\sigma\in V_4)$ obtained from the Klein 4-group $V_4$, with $\dim\left(\zerocolsum_{V_4}\right)=3$. 
        \item $\zerocolsum_{V_4}+\text{EI}_4=\spam_\RR\left(R_1,R_2,R_3,R_4,L_{\sigma}:\sigma \in V_4\right)$, with $\dim\left(\zerocolsum_{V_4}+\text{EI}_4\right)=6$. 
    \end{itemize}
        
\end{enumerate}

\end{theorem}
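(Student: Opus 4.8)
The plan is to convert the problem into pure representation theory via Theorem~\ref{thm:jordanIffExponential} and then grind the three structural lemmas against the decomposition in Theorem~\ref{thm:Lndim}. By Theorem~\ref{thm:jordanIffExponential} together with Lemma~\ref{lem:ifSubspaceMinimalThenEqualToSpan}, a nontrivial linear Markov model $\zerocolsum^+=\zerocolsum\cap\zerocolsum_n^+$ with $\zerocolsum=\spam_\RR(\zerocolsum^+)$ minimal is uniformization stable exactly when $\zerocolsum$ is a Jordan algebra, and it has full $S_n$-symmetry exactly when $\zerocolsum$ is an $S_n$-module; so the classification amounts to listing the $S_n$-submodules $\zerocolsum\subseteq\zerocolsum_n$ that are Jordan algebras, carry a ``stochastic basis'' ($\zerocolsum=\spam_\RR(\zerocolsum\cap\zerocolsum_n^+)$), and are nonzero. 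By Lemma~\ref{lem:trivialMarkovModel} every such $\zerocolsum$ contains $\spam_\RR(J)\cong\{n\}$, so for $n>3$ Theorem~\ref{thm:Lndim} forces $\zerocolsum=\{n\}\oplus a_2\{n-1,1\}\oplus a_3\{n-2,2\}\oplus a_4\{n-2,1^2\}$ with $a_2\in\{0,1,2\}$, $a_3,a_4\in\{0,1\}$, and for $n>4$ the job reduces to deciding which of these finitely many multiplicity vectors is realizable.

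For $n>4$ I would read the three lemmas as implications on $(a_2,a_3,a_4)$. Lemma~\ref{lem:doublyStochasticLemma} gives $a_4=1\implies\text{DS}_n\subseteq\zerocolsum$ (hence $a_3=1$, $a_2\geq1$); Lemma~\ref{lem:symmLemma} gives $a_3=1\implies\text{Symm}_n\subseteq\zerocolsum$ (hence $a_2\geq1$); and Lemma~\ref{lem:symmEILemma} gives $a_2\geq1\implies$ exactly one of $\text{EI}_n\subseteq\zerocolsum$ (with $\text{Symm}_n\not\subseteq\zerocolsum$), $\text{Symm}_n\subseteq\zerocolsum$ (with $\text{EI}_n\not\subseteq\zerocolsum$), or $\text{EI}_n+\text{Symm}_n\subseteq\zerocolsum$, the last being forced once $a_2=2$. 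Running through the cases pins $\zerocolsum$ down to one of $\text{CI}_n$, $\text{EI}_n$, $\text{Symm}_n$, $\text{EI}_n+\text{Symm}_n$, $\text{DS}_n$, $\text{GM}_n=\zerocolsum_n$: for instance $a_3=0$ excludes both $a_4=1$ and any copy of $\text{Symm}_n$, so $(a_2,a_3,a_4)=(1,0,0)$ forces $\zerocolsum=\text{EI}_n$ and $(2,0,0)$ is impossible, while $(2,1,0)$ forces $\text{EI}_n+\text{Symm}_n$ and $(1,1,1)$ forces $\text{DS}_n$. For the converse I would check that each of these six subspaces is (i) a Jordan algebra — immediate from Lemma~\ref{lem:submoduleslist}, with $\text{CI}_n$ (using $J^2=-J$) and $\text{GM}_n$ even being matrix algebras; (ii) an $S_n$-module — clear from the spanning sets; and (iii) minimal, since each is spanned by genuine rate matrices, namely $J$, the $R_i$, the $L_{ij}+L_{ji}$, the $R_i$ together with the rate matrices $R_i+C_i$, the $L_\sigma$, and the $L_{ij}$ respectively. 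This last point also yields uniformization stability of each listed model, via Theorem~\ref{thm:jordanIffExponential}.

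The cases $n=2,3,4$ need a separate pass, since some partitions in Theorem~\ref{thm:Lndim} become invalid (so the module has fewer summands, and the lemmas valid only for $n>4$ or $n>3$ no longer apply) and since the normal subgroups $C_3\trianglelefteq S_3$ and $V_4\trianglelefteq S_4$ supply extra group-based Jordan algebras. For $n=2$ one simply lists the submodules of $\zerocolsum_2\cong\{2\}\oplus\{1^2\}$ and notes that the sign summand meets $\zerocolsum_2^+$ only in $\{0\}$, leaving $\text{CI}_2$ and $\text{GM}_2=\text{EI}_2=\zerocolsum_2$. For $n=3$ and $n=4$ I would redo the multiplicity analysis with the appropriate decompositions $\zerocolsum_3\cong\{3\}\oplus2\{2,1\}\oplus\{1^3\}$ and $\zerocolsum_4\cong\{4\}\oplus2\{3,1\}\oplus\{2,2\}\oplus\{2,1^2\}$, re-deriving the $n=3$ and $n=4$ analogues of Lemmas~\ref{lem:symmLemma} and~\ref{lem:doublyStochasticLemma} by direct computation (partly already carried out inside those lemmas), and then adjoin the new solutions $\zerocolsum_{C_3}=\spam_\RR(L_\sigma:\sigma\in C_3)$ and $\zerocolsum_{C_3}+\text{EI}_3$ for $n=3$, and $\zerocolsum_{V_4}=\spam_\RR(L_\sigma:\sigma\in V_4)$ and $\zerocolsum_{V_4}+\text{EI}_4$ for $n=4$. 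Each of these is a matrix algebra — because for permutations ranging over a subgroup $L_\sigma L_\tau=L_{\sigma\tau}-L_\sigma-L_\tau$ stays in the span, and the cross relations $L_\sigma R_i=-L_\sigma$, $R_iL_\sigma=R_{\sigma(i)}-R_i-L_\sigma$ from the proof of Lemma~\ref{lem:submoduleslist}(5) keep the equal-input extension closed — and each is minimal, being spanned by the rate matrices $L_\sigma$ (together with the $R_i$ in the extended cases).

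I expect the main obstacle to be precisely this exceptional small-$n$ bookkeeping rather than the $n>4$ argument. At $n=3$ and $n=4$ the double multiplicity of $\{n-1,1\}$ means one must distinguish several \emph{non-conjugate} subspace realizations of the same module type (for example $\text{EI}$ versus $\text{Symm}$, or $\text{DS}$ versus $\zerocolsum_{C_3}+\text{EI}_3$), and one must verify that the group-based subspaces and their equal-input extensions are the \emph{only} additional Jordan algebras — which is exactly where Lemmas~\ref{lem:symmLemma} and~\ref{lem:doublyStochasticLemma} fail and must be replaced by hand. By contrast, once the three lemmas are in hand the $n>4$ classification is essentially a finite table; the only care needed there is confirming that every multiplicity vector absent from the list really is excluded, in particular that $a_2=2$ cannot occur without $a_3=1$ and that $a_4=1$ drags in all of $\text{DS}_n$.
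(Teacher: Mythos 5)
Your proposal follows essentially the same route as the paper: reduce via Theorem~\ref{thm:jordanIffExponential} and Lemma~\ref{lem:trivialMarkovModel} to classifying $S_n$-submodules of $\zerocolsum_n$ that are Jordan algebras, run the multiplicity bookkeeping from Theorem~\ref{thm:Lndim} through Lemmas~\ref{lem:symmEILemma}, \ref{lem:symmLemma} and \ref{lem:doublyStochasticLemma} for $n>4$, and treat $n=2,3,4$ separately with the group-based models $\zerocolsum_{C_3}$, $\zerocolsum_{V_4}$ and their equal-input extensions. One small slip in your converse check: $\text{EI}_n+\text{Symm}_n$ is not spanned by the $R_i$ together with the $R_i+C_i$ (that span is only $\{n\}\oplus 2\{n-1,1\}$, of dimension $2n-1$); for its stochastic basis use the rate matrices $R_i$ together with the $L_{ij}+L_{ji}$, which span $\text{EI}_n+\text{Symm}_n$ as in Lemma~\ref{lem:submoduleslist}.
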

\begin{proof}
Throughout, assume $\zerocolsum \subseteq \zerocolsum_n$ is a Jordan algebra and $S_n$-module. 
Recalling Lemma~\ref{lem:trivialMarkovModel}, for all $n\geq 1$ we note that non-triviality of the model implies $\{n\}\cong \spam_\RR\left(J\right)\subseteq \zerocolsum$.
If $\zerocolsum=\text{CI}_n$, we have a Jordan algebra and we are done.
Thus, we may assume $ \text{CI}_n\subset \zerocolsum$.

For $n>4$, taken together Lemmas~\ref{lem:symmLemma} and \ref{lem:doublyStochasticLemma} imply that $\zerocolsum$ must contain a submodule isomorphic to $\{n-1,1\}$, and, in particular, at least one of $\text{Symm}_n$ or $\text{EI}_n$ is contained in $\zerocolsum$. 
Then, if $\zerocolsum$ contains either $\text{Symm}_n$ or $\text{EI}_n$ but is not equal to either, we have $\zerocolsum = \text{EI}_n+\text{Symm}_n$ or $\zerocolsum$ contains a submodule isomorphic to $\{n-2,1^2\}$ and hence $\text{DS}_n \subseteq \zerocolsum$. 
If $\text{DS}_n \subset \zerocolsum$, it must be that $\{n-1,1\} \cong \text{EI}_n \subset \zerocolsum$ also and hence $\zerocolsum = \zerocolsum_n$.

For the $n\!=\!2$ case we have $\zerocolsum_2\cong \{2\}\oplus \{1^2\}$, so, since we are assuming $\zerocolsum\neq \text{CI}_n$, we have $\zerocolsum=\zerocolsum_2\cong \{2\}\oplus\{1^2\}$, which is a Jordan algebra (Lemma~\ref{lem:matrixIffJordanAndLie}).

For the $n\!=\!3$ case, we have $\zerocolsum_3\cong \{3\}\oplus 2\{n-1,1\}\oplus \{1^3\}$.
One easily checks that, for all $\sigma\in S_3$, we have $K{_\sigma}^T\left(L_{(123)}-L_{(132)}\right)K_{\sigma}=\text{sgn}(\sigma)\left(L_{(123)}-L_{(132)}\right)$ and hence $\spam_\RR\left(L_{(123)}-L_{(132)}\right)\cong \{1^3\}$ (the sign representation of $S_3$). 
Thus if $\zerocolsum\cong \{3\}\oplus \{1^3\}$ we have $\zerocolsum=\spam_{\RR}\left(L_\sigma:\sigma\in C_3\right)$, which is a Jordan algebra and also an $S_3$-module (the latter due to the normality of $C_3$ as a subgroup of $S_3$).
%If $\zerocolsum$ contains at least one submodule isomorphic to $\{2,1\}$, Lemma~\ref{lem:symmEILemma} implies that $\zerocolsum$ contains either $\text{EI}_3$,  $\text{Symm}_3$, or $\text{EI}_3+\text{Symm}_3$.%
If $\zerocolsum\cong \{3\}\oplus \{2,1\}\oplus \{1^3\}$, Lemma~\ref{lem:symmEILemma} implies that either $\text{Symm}_3\subset \zerocolsum$, and so $\zerocolsum=\text{DS}_3$, or $\text{EI}_3\subset \zerocolsum$, and so $\zerocolsum=\text{EI}_3+\zerocolsum_{C_3}$, which is easily confirmed to be a Jordan algebra.

For the $n\!=\!4$ case, we have $\zerocolsum_4 \cong \{4\} \oplus \{3,1\} \oplus \{2^2\} \oplus \{2,1^2\}$. From Lemmas \ref{lem:symmEILemma} and \ref{lem:doublyStochasticLemma}, $\zerocolsum$ must contain a submodule isomorphic to either $\{3,1\}$ or $\{2^2\}$. If $\zerocolsum$ contains a submodule isomorphic to $\{3,1\}$, but not $\{2^2\}$, then the argument follows as for the $n>4$ case. 
If $\zerocolsum \cong \{4\} \oplus \{2^2\}$, then by Lemma \ref{lem:zeroColSumSymmSpace}, $\zerocolsum = \spam_{\RR}(J,L_{(12)(34)}-L_{(13)(24)},L_{(13)(24)}-L_{(14)(23)})$, which is equal to $\zerocolsum_{V_4}=\spam_{\RR}(L_{\sigma}:\sigma \in V_4)$. 
An easy check confirms this is a matrix algebra and, hence, a Jordan algebra. If $\zerocolsum$ contains $\zerocolsum_{V_4}$ and a submodule isomorphic to $\{3,1\}$, by Lemma \ref{lem:symmEILemma}, $\zerocolsum$ either contains $\text{Symm}_4$ or is equal to $\text{EI}_4+\zerocolsum_{V_4}$, which can be confirmed to form a Jordan algebra. From here the proof follows the same as for $n>4$.

\end{proof}

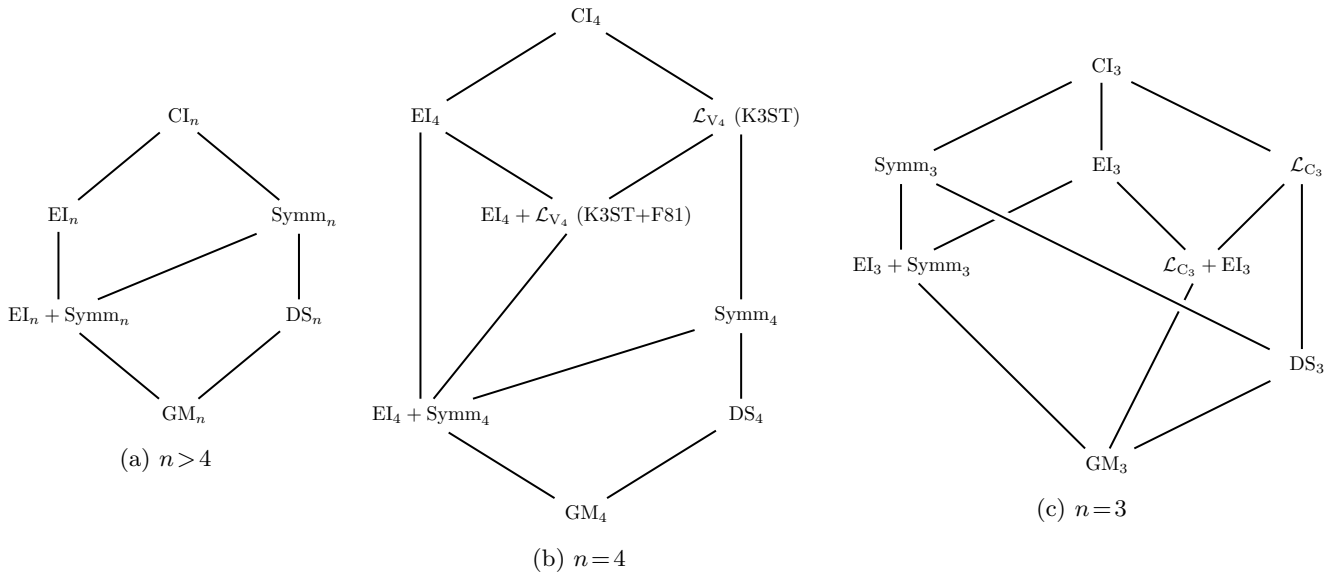
\begin{figure}
\hspace{-3em}
\begin{subfigure}{.33\textwidth}
  \adjustbox{scale=.75,center}{%
  \begin{tikzcd}[arrows={line width=0.1em}, column sep={6em,between origins}, row sep={5em,between origins}]
& \phantom{_n}\mathrm{CI}_n & \\
\phantom{_n}\mathrm{EI}_n \arrow[ru, no head] & & \phantom{_n}\mathrm{Symm}_n \arrow[lu, no head] \\
\phantom{_n}\phantom{_n}\mathrm{EI}_n + \mathrm{Symm}_n \arrow[rru, no head] \arrow[u, no head] & & \phantom{_n}\mathrm{DS}_n \arrow[u, no head]\\  & \phantom{_n}\mathrm{GM}_n \arrow[ru, no head] \arrow[lu, no head] &      
\end{tikzcd}
}
  \caption{$n\!>\!4$}
  \label{fig:sub1}
\end{subfigure}%
\hspace{-1em}
\begin{subfigure}{.33\textwidth}
  \adjustbox{scale=.75,center}{%
  \begin{tikzcd}[arrows={line width=0.1em}, column sep={8em,between origins}, row sep={5em,between origins}] & \phantom{_n}\mathrm{CI}_4 \arrow[ld, no head] \arrow[rd, no head] & \\
\phantom{_n}\mathrm{EI}_4 \arrow[ddd, no head] \arrow[rd, no head] & & \phantom{_n}\mathcal{L}_{\mathrm{V}_4} \ \text{(K3ST)} \arrow[ld, no head] \arrow[dd, no head]   \\&\phantom{_n}\mathrm{EI}_4+\mathcal{L}_{\mathrm{V}_4} \ \text{(K3ST+F81)} \arrow[ldd, no head] &  \\&  & \phantom{_n}\mathrm{Symm}_4 \arrow[lld, no head] \arrow[d, no head] \\
\phantom{_n}\phantom{_n}\mathrm{EI}_4 + \mathrm{Symm}_4 \arrow[rd, no head] &  & \phantom{_n}\mathrm{DS}_4 \arrow[ld, no head] \\  & \phantom{_n}\mathrm{GM}_4 &    
\end{tikzcd}
}
  \caption{$n\!=\!4$}
  \label{fig:sub2}
\end{subfigure}
\hspace{2em}
\begin{subfigure}{.33\textwidth}
  \adjustbox{scale=.75,center}{%
  \begin{tikzcd}[arrows={line width=0.1em}, column sep={5em,between origins}, row sep={5em,between origins}]
& & \phantom{_n}\mathrm{CI}_3 \arrow[rrd, no head]  &       & \\
\phantom{_n}\mathrm{Symm}_3 \arrow[rru, no head]  &  & \phantom{_n}\mathrm{EI}_3 \arrow[u, no head] \arrow[rd, no head]      &                                                             & \phantom{_n}\mathcal{L}_{\mathrm{C}_3} \arrow[dd, no head] \arrow[ld, no head] \\
\phantom{_n}\phantom{_n}\mathrm{EI}_3 + \mathrm{Symm}_3 \arrow[u, no head] \arrow[rru, no head] &  &  & \phantom{_n}\mathcal{L}_{\mathrm{C}_3}+\mathrm{EI}_3 \arrow[ldd, no head] & \\
{}  & {} \arrow[l, phantom] & & & \phantom{_n}\mathrm{DS}_3 \arrow[lllluu, no head,crossing over]\\& &\phantom{_n}\mathrm{GM}_3\arrow[rru, no head] \arrow[lluu, no head] & &                
\end{tikzcd}
}
  \caption{$n\!=\!3$}
  \label{fig:sub3}
\end{subfigure}
\caption{Hierarchies of uniformization stable Markov models with $S_n$ symmetry.}
\label{fig:hierarchies}
\end{figure}

\section{Discussion}

In this article we have established a characterisation of uniformization stable continuous-time Markov chains in terms of Jordan algebras associated to the space spanned by the rate matrices associated to the model.
Although, our main result (Theorem~\ref{thm:jordanIffExponential}) is technically limited to the case of linear models (Definition~\ref{def:markovModel}), we expect this characterisation provides significant insight into the general case, as we illustrated for time-reversible models in Section~\ref{sec:timeReversible}. 

Time-reversible Markov models play an important role in phylogenetic inference and, in Section~\ref{sec:timeReversible}, we displayed that the time-reversibility criterion naturally leads to the consideration of Jordan algebraic structure. While we have shown that time-reversibility of a Markov model does not imply the model is uniformization stable, we have presented a range of well-known time-reversible models of DNA evolution that exhibit uniformization stability, a number of which are popular models in phylogenetic modelling. It is still an open question whether there exist linear, uniformization stable, time-reversible Markov models that are not $G$-equivariant for some $G\leq S_n$. Further, the relevant condition on the group $G$ that leads to a $G$-equivariant time-reversible model being uniformization stable is unknown, and a topic for future work is to derive a hierarchy of uniformization stable $G$-equivariant time-reversible models for arbitrary $n$.

In Section \ref{sec:symmetry} we presented a complete hierarchy of linear uniformization stable Markov models with full $S_n$ symmetry for $n\geq 2$. The hierarchy relevant to molecular phylogenetics ($n=4$) presents itself here as a special case, exhibiting two models not present in the general case due to the presence of the Klein 4-group $V_4\leq S_4$. Many well-known models of DNA evolution present themselves in this hierarchy, including the Jukes-Cantor model, the equal-input model and the doubly stochastic model. Many of these models also appear in the hierarchy of Lie-Markov models with $S_4$ symmetry presented in \cite{sumnerLie}, due to such models forming matrix algebras. Unique among the uniformization stable models are the symmetric model $\text{Symm}_4$ and the equal-input + symmetric model $\text{EI}_4+\text{Symm}_4$ that have Jordan algebraic structure, but not Lie algebraic structure, and hence do not appear in the Lie-Markov Models hierarchy.

It is a topic for future work to derive hierarchies of uniformization stable Markov models with full $G$-symmetry for $G < S_n$, with there being particular interest in the dihedral group $D_4 < S_4$. This interest arises as, in the context of phylogenetics, a Markov model possessing $D_4$ symmetry is equivalent to the model respecting the grouping of nucleotides into purines and pyrimidines \cite{sumnerPurine}. The arguments presented in Section \ref{sec:symmetry} are a generalisation of arguments given in \cite{sumnerLie} for the $4 \times 4$ Lie algebra case. In a similar fashion as for uniformization stable Markov models, a derivation of Lie-Markov models with full $S_n$-symmetry for $n \geq 2$ can be done, though it is a topic for a different time.
\\ \\
\textbf{Declaration of interest}
\\

There are no competing interests.
\\\\
\textbf{Data Availability Statement}
\\

Data sharing is not applicable to this article as no datasets were generated or analysed during the study.
\\\\
\textbf{Acknowledgements}
\\

This work was supported by Australian Research Council Discovery Grant DP 180102215. We would like to thank Joshua Stevenson for helpful discussions throughout the course of this work. We also thank the anonymous reviewers, whose insightful comments assisted us in substantially improving this manuscript.
\small
\bibliographystyle{plain}
\bibliography{jordanBib}

\end{document}